\newtheorem{thm}{Theorem}[section]
\newtheorem{prop}[thm]{Proposition}
\newtheorem{cor}[thm]{Corollary}
\newtheorem{lemma}[thm]{Lemma}
\theoremstyle{remark}
\newtheorem{remark}[thm]{Remark}
\newtheorem*{sugcon*}{Convention}
\numberwithin{equation}{section}
\newcommand\cscatnd{%
\ensuremath{\operatorname{\mathtt{C*}}_{\!\!\!\!\mathtt{nd}}}}
\newcommand\cscoactndr{%
\ensuremath{\operatorname{\mathtt{C*coact}}_{\mathtt{nd}}^{\mathtt{r}}}}
\newcommand\csactnd{%
\ensuremath{\operatorname{\mathtt{C*act}}_{\mathtt{nd}}}}
\newcommand\cscoactndn{%
\ensuremath{\operatorname{\mathtt{C*coact}}_{\mathtt{nd}}^{\mathtt{n}}}}
\newcommand\cscoactnd{%
\ensuremath{\operatorname{\mathtt{C*coact}}_{\mathtt{nd}}}}
\newcommand\cscat{\ensuremath{\operatorname{\mathtt{C*}}}}
\newcommand\CP{\operatorname{\mathtt{CP}}}
\newcommand\CPn{\operatorname{\mathtt{CP}^{\mathtt{n}}}}
\newcommand\Fix{\operatorname{\mathtt{Fix}}}
\newcommand\Nor{\operatorname{\mathtt{Nor}}}
\newcommand\Inc{\operatorname{\mathtt{Inc}}}
\newcommand\Id{\operatorname{\mathtt{Id}}}
\DeclareMathOperator{\Aut}{Aut}
\newcommand{\quigg}{\delta^{Q}}
\DeclareMathOperator{\id}{id}
\DeclareMathOperator{\rt}{rt}
\newcommand\C{\mathbb{C}}
\newcommand\K{\mathcal{K}}
\newcommand\HH{\mathcal{H}}
\DeclareMathOperator{\Ad}{Ad}
\DeclareMathOperator{\clsp}{\overline{span}}
\DeclareMathOperator{\lsp}{{span}}
\DeclareMathOperator{\Red}{Red}
\DeclareMathOperator{\Mor}{Mor}
\let\phi\varphi
\def\new{\textcolor{black}}
\begin{document}

\begin{abstract}We survey the results required to pass between full and reduced coactions of locally compact groups on $C^*$-algebras, which say, roughly speaking, that one can always do so without changing the crossed-product $C^*$-algebra. Wherever possible we use definitions and constructions that are well-documented and  accessible to non-experts, and otherwise we provide full details. We then give a series of applications to illustrate the use of these techniques. We obtain in particular a  new version of Mansfield's imprimitivity theorem for full coactions, and prove that it gives a natural isomorphism between crossed-product functors defined on appropriate categories. 
\end{abstract}

\keywords{Coaction, crossed product, Landstad duality, proper action, fixed-point algebra.}
\subjclass[2000]{46L55}

\title[Full and reduced coactions]{\boldmath{Full and reduced coactions\\ of locally compact groups on $C^*$-algebras}}

\author[an Huef]{Astrid an Huef}
\address{Department of Mathematics and Statistics\\
University of Otago\\PO Box 56\\ Dunedin 9054\\
New Zealand}
\email{astrid@maths.otago.ac.nz}

\author[Quigg]{John Quigg}
\address{School of Mathematical and Statistical Sciences\\
Arizona State University\\Tempe\\
AZ 85287\\
USA}
\email{quigg@asu.edu}

\author[Raeburn]{Iain Raeburn}
\address{School  of Mathematics and Applied Statistics, University of
Wollongong, NSW 2522, Australia}
\email{raeburn@uow.edu.au}

\author[Williams]{Dana P. Williams}
\address{Department of Mathematics\\Dartmouth College\\Hanover, NH 03755\\USA}
\email{dana.williams@dartmouth.edu}

\date{January 3, 2010}

\thanks{This research was supported by the Australian Research Council}
\maketitle

\section{Introduction}

Every locally compact group has two $C^*$-algebras: the full group $C^*$-algebra $C^*(G)$, which is generated by a universal unitary representation of $G$, and the reduced group $C^*$-algebra $C_r^*(G)$, which acts faithfully via the left-regular representation $\lambda$ on $L^2(G)$. These algebras carry canonical comultiplications $\delta_G$ and $\delta_G^r$. A full coaction of $G$ on a $C^*$-algebra $B$ is an injective homomorphism $\delta:B\to M(B\otimes C^*(G))$, and  satisfies the coaction identity $(\delta\otimes{\id})\circ\delta=({\id}\otimes\delta_G)\circ \delta$; a reduced coaction is an injective homomorphism $\delta:B\to M(B\otimes C^*_r(G))$, and satisfies a similar identity with $\delta_G$ replaced by $\delta_G^r$. The key examples are the dual coaction $\hat\alpha$ on a full crossed product $A\times_\alpha G$, which is a full coaction, and the dual coaction $\hat{\alpha}^r$ on a reduced crossed product $A\times_{\alpha,r}G$, which is a reduced coaction.

In the early papers about coactions on $C^*$-algebras, authors used reduced coactions and spatial arguments (see, for example, \cite{L}, \cite{K} and \cite{LPRS}), and full coactions appeared rather later \cite{rae, Q2}. Full coactions did not immediately catch on, and researchers continued to study reduced coactions throughout the 1990s. So those interested in full coactions have to be able to convert results about reduced coactions into ones about full coactions. The cognoscenti generally agree that the results in \cite{rae} and \cite{Q2} allow one to do this, especially for questions about covariant representations and crossed products, but often key pieces of the puzzle are not readily available. Our goal here is to provide a comprehensive set of results \new{that} will allow researchers to pass as easily as possible between full and reduced coactions (both ways). We have tried whenever possible to choose definitions and constructions \new{that} are well-documented and  accessible to non-experts.

In \cite{rae}, Raeburn described a canonical procedure for passing from a full coaction $(B,\delta)$ to a reduced coaction $(B^r,\delta^r)$ on a quotient of $B$, and showed that this ``reduction process'' does not change the crossed product \cite[Theorem~4.1]{rae}. Subsequently, Quigg showed in \cite{Q2} that the same thing could be achieved in a two-step process: first ``normalise'' to get a full coaction $\delta^n$ on a quotient $B^n$ of $B$, and then reduce $\delta^n$ to get a reduced coaction on the same quotient $B^n$.  Quigg's approach has the advantage that the second step is reversible: provided we restrict attention to Quigg's class of normal full coactions, we can pass to and fro freely. This is the content of our \S\ref{nor=red}; most of the key results already exist in \cite{Q2} and \cite{kqrproper}, and we have merely gathered them together, tidied them up, and filled in some minor gaps. To completely understand the reduction process, though, we also need to understand Quigg's normalisation process, and this is the content of our \S\ref{secnormalising}.

We are thinking of the results of \S\ref{nor=red} and \S\ref{secnormalising} as providing ``boilerplates'', which can be bolted to papers on one kind of coaction and thereby allow users to apply these results to the other kinds. Each boilerplate consists of a Proposition \new{describing} the main properties of the construction (Propositions~\ref{propred} and~\ref{propnormal}), and a Theorem \new{describing} the functorial properties of the construction (Theorems~\ref{bashatbplate} and~\ref{bplatenor}). We have tested these boilerplates by applying them to a variety of theorems about crossed products (see \S\ref{secappl}). 

We begin with a short section in which we discuss our conventions on crossed products. This is an important issue in this paper: we want  constructions which give concrete $C^*$-algebras so that we can use them as the object maps in functors.\footnote{There is an alternative view that one does not need to do this and therefore shouldn't: one can instead define a crossed product to be a $C^*$-algebra with certain universal properties, and then use some generalisation of the axiom of choice to choose one crossed product for each system. This point of view is discussed in \cite{kqcat}.} For actions we define the crossed product as a completion, and for a coaction $(B,\delta)$ we define $B\times_\delta G$ to be a 
particular $C^*$-subalgebra of $B\otimes \K(L^2(G))$, and we call this the standard crossed product.

As we discussed above, the boilerplates themselves are in \S\ref{nor=red} and \S\ref{secnormalising}, and \S\ref{secappl} contains some applications. Our first application makes precise the assertion that ``reducing and normalising do not affect the crossed product'' by constructing crossed product functors and proving that the reduction and normalisation functors intertwine these crossed-product functors (\S \ref{secCP}). Next we extend a uniqueness theorem for crossed products by coactions from the version for reduced crossed products in \cite{rae} to full crossed products (see \S\ref{secdiut}). We use this new ``dual-invariant uniqueness theorem'' several times in our later applications. In \S\ref{secland}, we formulate four alternative versions of Landstad duality for actions. The original was proved in \cite{L}, and one of the other versions has been previously used in \cite{kqcat}, but we find it quite satisfying to see how the relationships between the different formulations fall out from the boilerplate. In \S\ref{subsecMansfield}, we discuss the relationship between the different versions of Mansfield's imprimitivity theorem for full and reduced coactions; we believe that our formulation of the theorem for full coactions is new. Our final application extends the results in \cite{kqrproper} on the naturality of Mansfield imprimitivity for reduced coactions to full coactions (see~\S\ref{secmannat}).  

Because coactions take values in multiplier algebras, the coaction identity
\begin{equation}\label{coactid}
(\delta\otimes{\id})\circ\delta=({\id} \otimes \delta_G)\circ \delta
\end{equation}
implicitly involves the extensions of the homomorphisms $\delta\otimes{\id}$ and ${\id} \otimes \delta_G$ to the multiplier algebra $M(B\otimes C^*(G)\otimes C^*(G))$. In early papers such as \cite{LPRS}\footnote{Though not \cite{L}, where the coactions were defined by restricting coactions on von Neumann algebras, and hence arrived extended.}, the extension process was explicitly announced by inserting bars, but people got bored with this and started leaving the bars off (see \cite{rae} and \cite{Q2}, for example). This is  okay so long as we remember that the process has its subtleties, and in particular that $\overline{\delta\otimes{\id}}$ is not the usual tensor product of two homomorphisms. After a few scares, we have vowed to be more careful in the future, and have included an appendix in which we discuss some of the subtleties involved in barring tensor products.

Fans of the relatively new maximal coactions introduced in \cite{ekq} will want yet another boilerplate. Each full coaction has a maximalisation, which is a maximal coaction from which the given coaction can be recovered as a quotient. It was shown in \cite{kqcat} that maximalisation can be made into a functor \new{that} has properties which mirror those of normalisation. However, the object map in the functor used in \cite{kqcat} is defined using the axiom of choice, and is not implemented by either of the known constructions of a maximalisation. So there is considerable work to do before we can even begin to forge a boilerplate for maximalisation.

\subsection*{Conventions} We denote the multiplier algebra of a $C^*$-algebra $A$ by $M(A)$, and the group of unitary elements of $M(A)$ by $UM(A)$. By a unitary representation of a locally compact group $G$ in a $C^*$-algebra $A$, we mean a strictly continuous homomorphism $u:G\to UM(A)$. We say that a homomorphism $\phi:A\to M(B)$ of a $C^*$-algebra into a multiplier algebra is \emph{nondegenerate} if $\lsp\{\phi(a)b\}$ is dense in $B$. Every nondegenerate homomorphism $\phi:A\to M(B)$ has a unique extension $\overline\phi:M(A)\to M(B)$ (and we discuss some properties of this extension  in Appendix~\ref{secbarring}). All tensor products of $C^*$-algebras in this paper are spatial.  

Our conventions and notations for categories of $C^*$-algebras and dynamical systems are those of \cite{aHRWsurvey}. These categories are mostly based on those used in \cite{kqcat} and in \cite{kqrproper}, and we explained in \cite{aHRWsurvey} why we are interested in these categories and in crossed-product functors defined on them. 

Throughout, $G$ will be a fixed locally compact group. When $G$ acts on the right of a locally compact space $T$, we write $\rt$ for the induced action of $G$ on $C_0(T)$ defined by \new{$\rt_s(f)(t)=f(t\cdot s)$}.

\section{Crossed products}

Suppose that $\alpha:G\to \Aut A$ is an action of a locally compact group $G$ on a $C^*$-algebra $A$. As in \cite{enchilada} and \cite{tfb2}, we view the crossed product $A\times_\alpha G$ as the completion of $C_c(G,A)$ in the universal norm, so that when we write $A\times_\alpha G$ we have a particular $C^*$-algebra in mind (as opposed to an isomorphism class of $C^*$-algebras as in \cite{raetak}). It is generated by a covariant representation $(i_A,i_G)$ of $(A,\alpha)$ in $M(A\times_\alpha G)$, in the sense that every element $f\in C_c(G,A)$ can be realised as a norm-convergent integral $\int_G i_A(f(s))i_G(s)\,ds$ (see \cite[Corollary~2.36]{tfb2}). The triple $(A\times_\alpha G, i_A,i_G)$ is then universal for covariant representations: for every covariant representation $(\pi,u)$ of $(A,\alpha)$ in a multiplier algebra $M(C)$, there is a unique nondegenerate homomorphism $\pi\times u:A\times_\alpha G\to M(C)$ (called the integrated form) such that $\pi=\overline{\pi\times u}\circ i_A$ and $u=\overline{\pi\times u}\circ i_G$ (see \cite[Proposition~2.39]{tfb2}). In particular (take $A=\C$), the group $C^*$-algebra $C^*(G)$ is generated by a unitary representation $k_G:G\to UM(C^*(G))$, and every strictly continuous unitary representation $u:G\to UM(C)$ gives a nondegenerate representation $\pi_u:C^*(G)\to M(C)$ such that $u=\new{\overline{\pi_u}}\circ k_G$.

We view the reduced crossed product $A\times_{\alpha,r}G$ as the quotient of $A\times_\alpha G$ by the kernel of the regular representation induced by a faithful representation of $A$ (see \cite[Lemma~7.8]{tfb2}, for example). We write $q^r$ for the quotient map,  $(i_A^r,i_G^r)$ for the canonical covariant representation of $(A,\alpha)$ in $M(A\times_{\alpha,r}G)$, and $\pi\times_r u$ for the representation of $A\times_{\alpha,r}G$ coming from a covariant representation $(\pi,u)$ such that $\ker(\pi\times u)\supset\ker q^r$. Similarly, the reduced group algebra $C_r^*(G)$ is generated by $k_G^r:G\to UM(C_r^*(G))$, and a unitary representation $u$ of $G$ such that $\ker \pi_u\supset\ker \pi_\lambda=\ker q^r$ gives a representation $\pi_u^r$ of $C_r^*(G)$.

Our conventions regarding full and reduced  coactions are those of \cite[Appendix A]{enchilada}. A crossed product $A\times_\alpha G$ carries a dual coaction $\hat\alpha$, which is the integrated form of the representation $(i_A\otimes 1,i_G\otimes k_G)$ of $(A,\alpha)$ in $M((A\times_\alpha G)\otimes C^*(G))$. The reduced crossed product $A\times_{\alpha,r}G$, on the other hand, carries both a \emph{full dual coaction}
\[
\hat\alpha^n:=(i_A^r\otimes 1)\times_r(i_G^r\otimes k_G)
\]
(see \cite[Example~A.27]{enchilada}) and a \emph{reduced dual coaction}
\[
\hat\alpha^r:=(i_A^r\otimes 1)\times_r(i_G^r\otimes k_G^r),
\]
which was the dual coaction used in the early papers such as \cite{K} and \cite{LPRS}. The notations $\hat\alpha^n$ and $\hat\alpha^r$ are suggestive: these coactions are, respectively, the normalisation of $\hat\alpha$ \cite[Proposition~A.61]{enchilada} and the reduction of $\hat\alpha$ \cite[Proposition~3.2]{rae}.

Crossed products by coactions can be (and have been) defined in many different  ways. The general upshot is that all these definitions and/or constructions give effectively the same $C^*$-algebra, and indeed that is one of the points which has to be made explicit in the various boilerplates. So we need to be careful to set out which conventions we are using.

Suppose that $\delta :B\to M(B\otimes C^*(G))$ is a full coaction of a locally compact group $G$ on a $C^*$-algebra $B$. A \emph{covariant representation of $(B,\delta)$} in a $C^*$-algebra $C$ is a pair $(\pi,\mu)$ of nondegenerate homomorphisms $\pi:B\to M(C)$ and $\mu:C_0(G)\to M(C)$ such that
\[
\overline{\pi\otimes{\id}}\circ\delta(b)=\Ad\overline{\mu\otimes {\id}}(w_G)(\pi(b)\otimes 1) \quad\text{ in $M(B\otimes C^*(G))$ for every $b\in B$,}
\]
where $w_G$ is the multiplier of $C_0(G)\otimes C^*(G)=C_0(G,C^*(G))$ defined by the function $k_G:G\to M(C_r^*(G))$.
A \emph{crossed product $(A,\rho,\nu)$ for $(B,\delta)$} consists of a covariant representation $(\rho,\nu)$ of $(B,\delta)$ in a $C^*$-algebra $A$ such that the elements $\{\rho(b)\nu(f):b\in B,\,f\in C_0(G)\}$ generate $A$ and every covariant representation $(\pi,\mu):(B,\delta)\to M(C)$ gives a nondegenerate representation $\pi\times\mu:A\to M(C)$ satisfying $\pi=\overline{\pi\times \mu}\circ\rho$ and $\mu=\overline{\pi\times \mu}\circ\nu$. We sometimes call $\pi\times \mu$ the \emph{integrated form} of $(\pi,\mu)$, by analogy with the case of actions. If $(A,\rho,\nu)$ is a crossed product, then we have
\[
A=\clsp\{\rho(b)\nu(f):b\in B\;,f\in C_0(G)\}
\]
(see \cite[Lemma~2.10]{rae} or \cite[Proposition~A.36]{enchilada}). Any two crossed products $(A_1,\rho_1,\nu_1)$ and $(A_2,\rho_2,\nu_2)$ for $(B,\delta)$ are canonically isomorphic: since $(\rho_2,\nu_2)$ is a covariant representation of $(B,\delta)$ in $A_2$, it has an integrated form $\rho_2\times\nu_2:A_1\to M(A_2)$, and this is an isomorphism of $A_1$ onto $A_2$ with inverse $\rho_1\times \nu_1$. Every crossed product $(A,\rho,\nu)$ carries a dual action $\hat\delta:G\to\Aut A$, which is characterised by $\hat\delta_s(\rho(b)\nu(f))=\rho(b)\nu(\rt_s(f))$, and every canonical isomorphism between two crossed products is equivariant for these dual actions.

When we deal with functors, it is helpful to have a specific construction of a crossed product in mind, and we use the one described in \cite[\S A.5]{enchilada}. For any full coaction $(B,\delta)$, the pair $(j_B,j_G):=(\overline{{\id}\otimes\pi_\lambda}\circ\delta, 1\otimes M)$ is a covariant representation of $(B,\delta)$ in $B\otimes \K(L^2(G))$, and it is shown in \cite[Theorem~A.41]{enchilada}, for example, that if $B\times_\delta G$ is the $C^*$-subalgebra of $M(B\otimes \K(L^2(G)))$ generated by the elements $\{j_B(b) j_G(f):b\in B,\,f\in C_0(G)\}$, then the triple $(B\times_\delta G,j_B,j_G)$ is a crossed product for $(B,\delta)$. We call it the \emph{standard crossed product}, and we reserve the notation $(B\times_\delta G,j_B,j_G)$ for this crossed product. We write $j_G^B$ for $j_G$ when there are several systems around.

We use analogous conventions for crossed products by reduced coactions.  A pair $\pi:B\to M(C)$, $\mu:C_0(G)\to M(C)$ of nondegenerate homomorphisms is a covariant representation of a reduced coaction $(B,\delta)$ if
\[
\overline{\pi\otimes{\id}}\circ\delta(b)=\Ad\overline{\mu\otimes {\id}}(w_G^r)(\pi(b)\otimes 1) \quad\text{ in $M(B\otimes C^*_r(G))$ for every $b\in B$,}
\]
where now $w_G^r$ is the multiplier of $C_0(G)\otimes C_r^*(G)=C_0(G,C_r^*(G))$ defined by the function $k_G^r$. A crossed product for $(B,\delta)$ is then a $C^*$-algebra generated by a universal covariant representation, and we write $(B\times_\delta G, j_B, j_G)$ for the \emph{standard crossed product} generated by the covariant representation $(j_B, j_G):=(\overline{{\id}\otimes \pi_\lambda^r}\circ\delta, 1\otimes M)$ in $B\otimes \K(L^2(G))$. (We hope the similar notation for full and reduced does not cause problems: each coaction of whatever sort has exactly one standard crossed product.)

\section{The boilerplate for normal and reduced coactions}\label{nor=red}

A full coaction $(B,G,\delta)$ is \emph{Quigg-normal}, or just \emph{normal}, if the canonical representation $j_B$ of $B$ in $M(B\times_\delta G)$ is faithful. We consider the category $\cscoactnd(G)$ whose objects are full coactions $(B,\delta)$, and in which the morphisms from $(B,\delta)$ to $(C,\epsilon)$ are nondegenerate homomorphisms $\phi:B\to M(C)$ such that $\overline{\phi\otimes \id}\circ\delta=\overline\epsilon\circ\phi$. (This category was  used in \cite{kqcat} and \cite{kqrproper}, and was discussed  in detail in \cite{aHRWsurvey}; see also Proposition~\ref{ex1} below.) In this section, we consider the full subcategory  $\cscoactndn(G)$ of normal coactions, and give a boilerplate for the reduction process on $\cscoactndn(G)$.

The first part of our boilerplate sums up the main properties of reductions, and is taken mostly from  \cite{Q2}.

\begin{prop}\label{propred}
Let $(B,\delta)$ be a normal coaction of a locally compact group $G$.

\smallskip
\textnormal{(a)} The map $\delta^r:=\overline{{\id}\otimes{q^r}}\circ\delta$ is a reduced coaction of $G$ on $B$, called the \emph{reduction of $\delta$}. The map $\Red:(B,\delta)\mapsto (B,\delta^r)$ is a bijection from the class of normal coactions onto the class of reduced coactions. The inverse $\Red^{-1}$ takes a reduced coaction $(B,\epsilon)$ to its \emph{Quiggification} $\epsilon^Q:B\to M(B\otimes C^*(G))$, which is characterised by
\begin{equation}\label{quiggdef}
\overline{\pi\otimes\id}(\epsilon^Q(b))=\Ad\overline{\mu\otimes\id}(w_G)(\pi(b)\otimes 1)
\end{equation}
for every covariant representation $(\pi,\mu)$ of $(B,\epsilon)$.

\smallskip
\textnormal{(b)} Suppose that $\pi:B\to M(C)$, $\mu:C_0(G)\to M(C)$ are nondegenerate representations. Then $(\pi,\mu)$ is a covariant representation of $(B,\delta)$ if and only $(\pi,\mu)$ is a covariant representation of $(B,\delta^r)$.

\smallskip
\textnormal{(c)} A triple $(A,\rho,\nu)$ is a crossed product for $(B,\delta)$ if and only if it is a crossed product for $(B,\delta^r)$. The standard crossed products $B\times_\delta G$ and $B\times_{\delta^r}G$ are the same subalgebra of $M(B\otimes \K(L^2(G)))$.
\end{prop}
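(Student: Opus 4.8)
The plan is to establish part (a) first, since parts (b) and (c) will follow from it together with the universal properties already set out in the preceding section.

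\textbf{Part (a).} First I would check that $\delta^r:=\overline{{\id}\otimes q^r}\circ\delta$ is a homomorphism into $M(B\otimes C_r^*(G))$: since $q^r\colon C^*(G)\to C_r^*(G)$ is surjective, ${\id}\otimes q^r\colon B\otimes C^*(G)\to B\otimes C_r^*(G)$ is surjective and hence nondegenerate, so its canonical extension $\overline{{\id}\otimes q^r}$ exists and composing with $\delta$ makes sense. Injectivity of $\delta^r$ is exactly the hypothesis that $\delta$ is normal: by \cite[Theorem~A.41]{enchilada} the standard crossed product $B\times_\delta G$ is generated inside $M(B\otimes\K(L^2(G)))$ by $j_B=\overline{{\id}\otimes\pi_\lambda}\circ\delta$, and since $\pi_\lambda$ factors through $q^r$ as $\pi_\lambda^r\circ q^r$, we get $j_B=\overline{{\id}\otimes\pi_\lambda^r}\circ\delta^r$, so $j_B$ faithful forces $\delta^r$ faithful. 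The reduced coaction identity for $\delta^r$ then follows by applying $\overline{{\id}\otimes q^r\otimes q^r}$ to the coaction identity \eqref{coactid} for $\delta$ and using that $q^r\otimes q^r$ intertwines $\delta_G$ with $\delta_G^r$ (this is where some care with barred tensor products, in the spirit of the appendix, is needed). For the bijection statement, I would verify that $\Red$ is injective by noting that $\delta$ can be recovered from $\delta^r$ via the characterisation \eqref{quiggdef}: a covariant representation $(\pi,\mu)$ of $(B,\delta)$ is the same data as a covariant representation of $(B,\delta^r)$ (this is part (b), which I would prove in tandem), and the standard crossed product has $(j_B,j_G)$ covariant, so $\delta$ is determined on $B=\overline{j_B(B)}$ by the formula. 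Surjectivity: given any reduced coaction $(B,\epsilon)$, one defines $\epsilon^Q$ by \eqref{quiggdef} applied to the standard covariant representation of $(B,\epsilon)$; one must check this is well-defined, a full coaction, and normal, and that its reduction is $\epsilon$. Most of this is in \cite{Q2}, so I would cite it and only spell out what is needed for the well-definedness and for $(\epsilon^Q)^r=\epsilon$.

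\textbf{Part (b).} The point is that the covariance condition for $(B,\delta)$ and for $(B,\delta^r)$ are literally the same equation. Given nondegenerate $\pi\colon B\to M(C)$ and $\mu\colon C_0(G)\to M(C)$, apply $\overline{{\id}\otimes q^r}$ to both sides of the full covariance equation $\overline{\pi\otimes\id}\circ\delta(b)=\Ad\overline{\mu\otimes\id}(w_G)(\pi(b)\otimes 1)$; the left side becomes $\overline{\pi\otimes\id}\circ\delta^r(b)$ (using $(\pi\otimes\id)\circ({\id}\otimes q^r)=({\id}\otimes q^r)\circ(\pi\otimes\id)$ at the multiplier level), and the right side becomes $\Ad\overline{\mu\otimes\id}(w_G^r)(\pi(b)\otimes 1)$ because $(\id\otimes q^r)(w_G)=w_G^r$ by definition of $w_G$ and $w_G^r$ via $k_G$ and $k_G^r=q^r\circ k_G$. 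This gives one direction; the converse uses that the full covariance equation lives in $M(B\otimes C^*(G))$ and that $\overline{\pi\otimes\id}\circ\delta$ factors as $\overline{\pi\otimes\id}$ applied to something in $M(B\otimes C^*(G))$ — the cleanest route is to observe that both equations are equivalent to the single assertion that the pair $(\pi,\mu)$ integrates to a representation of the standard crossed product, invoking the universal property; alternatively one checks directly that $w_G^r$ determines $w_G$ since $\pi_\lambda$ is injective on the relevant completion. I would probably phrase it via the crossed-product universal property to avoid re-deriving the faithfulness input.

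\textbf{Part (c).} This is now immediate: by (b) a triple $(A,\rho,\nu)$ consists of a covariant representation of $(B,\delta)$ with the generation property if and only if it consists of a covariant representation of $(B,\delta^r)$ with the generation property, and the universal property defining "crossed product" is phrased entirely in terms of covariant representations, which coincide for the two coactions; hence the crossed products coincide. For the final sentence, both standard crossed products are the $C^*$-subalgebra of $M(B\otimes\K(L^2(G)))$ generated by $\{j_B(b)j_G(f)\}$, and I showed in part (a) that the respective $j_B$ maps agree — $\overline{{\id}\otimes\pi_\lambda}\circ\delta=\overline{{\id}\otimes\pi_\lambda^r}\circ\delta^r$ — while $j_G=1\otimes M$ in both cases, so the generating sets, and hence the algebras, are identical.

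\textbf{Main obstacle.} The routine but genuinely delicate part is the bookkeeping with barred tensor products in part (a): verifying that $\overline{{\id}\otimes q^r}\circ\delta$ satisfies the reduced coaction identity requires knowing that $\overline{\overline{{\id}\otimes q^r}\otimes{\id}}$ and $\overline{{\id}\otimes\overline{q^r\otimes q^r}}$ behave compatibly on $M(B\otimes C^*(G)\otimes C^*(G))$, and that $\overline{q^r\otimes q^r}$ carries $\overline{\delta_G}$ to $\overline{\delta_G^r}$ — precisely the kind of subtlety the appendix is there to handle. Everything else is a matter of transporting universal properties, and the substance has already been done in \cite{Q2}, so I would keep the exposition brief and point to it.
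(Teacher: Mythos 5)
Your proposal is correct and follows essentially the same route as the paper: the substantive inputs (that $\delta^r$ is a reduced coaction, and that every reduced coaction has a unique normal Quiggification whose reduction recovers it) are delegated to Quigg's paper, and parts (b) and (c) are then formal transports, with the forward half of (b) obtained exactly as in the paper by applying $\overline{{\id}\otimes q^r}$ and using $\overline{{\id}\otimes q^r}(w_G)=w_G^r$, and (c) from the equality of generators $\overline{{\id}\otimes\pi_\lambda}\circ\delta=\overline{{\id}\otimes\pi_\lambda^r}\circ\delta^r$. The only divergence is the backward half of (b), which the paper deduces from the characterisation \eqref{quiggdef} applied with $\epsilon=\delta^r$ and $\delta=(\delta^r)^Q$, whereas you route it through the universal property of the (common) standard crossed product --- that works equally well; just discard your mooted alternative of ``recovering $w_G$ from $w_G^r$ by injectivity of $\pi_\lambda$'', since $\pi_\lambda$ is not injective on $C^*(G)$ for non-amenable $G$.
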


\begin{remark} If $(B,\epsilon)$ is a reduced coaction of $G$, then applying (b) and (c) with $\delta:=\epsilon^Q$ gives analogues of (b) and (c) relating $(B,\epsilon^Q)$ to $(B,\epsilon)$.
\end{remark}

\begin{proof}[Proof of Proposition~\ref{propred}]
That $\delta^r$ is a reduced coaction is proved in Proposition~3.3 of \cite{Q2}. It is shown in the proof of \cite[Theorem~4.7]{Q2} that if $(B,\epsilon)$ is a reduced coaction, then there is a unique coaction $\epsilon^Q$ on $B$ satisfying
\begin{equation}\label{quiggdef2}
\overline{j_B^r\otimes\id}(\epsilon^Q(b))=\Ad\overline{j_G^r\otimes\id}(w_G)(j_B^r(b)\otimes 1),
\end{equation} that $\epsilon^Q$ is normal, and that $(\epsilon^Q)^r=\epsilon$. This immediately gives \eqref{quiggdef} for $(\pi,\mu)=(j_B^r,j_G^r)$, and for general $(\pi,\mu)$, we apply $\overline{\pi\times_r\mu}\otimes{\id}$ to both sides of \eqref{quiggdef2} and use \eqref{N1}. This implies that $\Red$ is onto, and the uniqueness assertion in \cite[Theorem~4.7]{Q2} implies that $\Red$ is one-to-one. This gives (a).

Part (b) is proved in Proposition~3.7 of \cite{Q2}, but we can also  deduce it from (a). If $(\pi,\mu)$ is covariant for $(B,\delta)$, then we have
\begin{align}\label{fulltored}
\overline{{\id}\otimes q^r}\circ \overline{\pi\otimes{\id}}(\delta(b))
&=\overline{{\id}\otimes q^r}\big(\Ad\overline{\mu\otimes{\id}}(w_G)(\pi(b)\otimes 1)\big)\notag\\
&=\Ad\big(\overline{{\id}\otimes q^r}\circ\overline{\mu\otimes{\id}}(w_G)\big)(\pi(b)\otimes 1).
\end{align}
Using \eqref{N2} to pull ${\id}\otimes q^r$ past $\pi\otimes {\id}$   shows that the left of \eqref{fulltored} is $\overline{\pi\otimes{\id}}(\delta^r(b))$, and a calculation in $M(C_0(G,C^*_r(G)))$ shows that $\overline{{\id}\otimes q^r}(w_G)=w_G^r$, so \eqref{fulltored} implies that $(\pi,\mu)$ is covariant for $(B,\delta^r)$.

Now suppose that $(\pi,\mu)$ is covariant for $(B,\delta^r)$, and recall from (a) that $\delta=(\delta^r)^Q$. Thus we can deduce from \eqref{quiggdef} (with $\epsilon=\delta^r$) that
\begin{align*}
\overline{\pi\otimes{\id}}(\delta(b))
=\overline{\pi\otimes{\id}}((\delta^r)^Q(b))
=\Ad\overline{\mu\otimes\id}(w_G)(\pi(b)\otimes 1),
\end{align*}
which is the required covariance.

Suppose that $(A,\rho,\nu)$ is a crossed product for $(B,\delta)$. Part (b) implies that $(\rho,\nu)$ is covariant for $(B,\delta^r)$. If $(\pi,\mu)$ is a covariant representation for $(B,\delta^r)$, then the other half of part (b) implies that $(\pi,\mu)$ is covariant for $(B,\delta)$, and hence there is a representation $\pi\times \mu$ of $A$ such that $\pi=\overline{\pi\times \mu}\circ \rho$ and $\mu=\overline{\pi\times \mu}\circ \nu$. Thus $(A,\rho,\nu)$ is a crossed product for $(B,\delta^r)$. A similar argument gives the other direction. For the last comment in (c), we note that, using \eqref{barcomp},
\[
\overline{{\id}\otimes\pi_\lambda^r}\circ\delta^r=\overline{{\id}\otimes\pi_\lambda^r}\circ\overline{{\id}\otimes q^r}\circ\delta=\overline{\id\otimes(\new{\overline{\pi_\lambda^r}}\circ q^r)}\circ\delta=\overline{{\id}\otimes\pi_\lambda}\circ\delta,
\]
so the two crossed products have exactly the same generators.
\end{proof}

We now want to form a category $\cscoactndr(G)$ of reduced coactions so that $\Red$ is a functor, and then establish that $\Red$ is an isomorphism of categories. As before, the morphisms from one reduced coaction $(B,\delta)$ to another $(C,\epsilon)$ will be nondegenerate homomorphisms $\phi:B\to M(C)$ such that $\overline{\phi\otimes \id}\circ\delta=\overline\epsilon\circ\phi$, though this time the equation holds in $M(C\otimes C_r^*(G))$ rather than $M(C\otimes C^*(G))$. The composition of morphisms in $\cscoactndr(G)$ will be defined as in $\cscatnd$: $\psi\circ\phi$ is by definition the usual composition $\overline\psi\circ \phi$. There are some things to check before we can confidently assert that this gives a category; the key to doing this is the following lemma, which is also the main step in proving that $\Red$ is an isomorphism. It was proved by a different argument in \cite[Appendix~A]{kqrproper}.

\begin{lemma}\label{cov=cov_r}
Suppose that $(B,\delta)$ and $(C,\epsilon)$ are objects in $\cscoactndn(G)$, and $\phi:B\to M(C)$ is a nondegenerate homomorphism. Then
\[
\overline{\phi\otimes\id}\circ\delta=\overline \epsilon\circ\phi\Longleftrightarrow
\overline{\phi\otimes\id}\circ\delta^r=\new{\overline{\epsilon^r}}\circ\phi.
\]
\end{lemma}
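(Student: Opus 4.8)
The plan is to treat the two implications separately, since they differ greatly in difficulty. The implication $\Rightarrow$ is cheap: apply $\overline{\id\otimes q^r}$ to both sides of $\overline{\phi\otimes\id}\circ\delta=\overline\epsilon\circ\phi$. On the left one commutes $\id\otimes q^r$ past $\phi\otimes\id$ — exactly as in the proof of Proposition~\ref{propred}(b), using \eqref{N2} — to obtain $\overline{\phi\otimes\id}\circ\overline{\id\otimes q^r}\circ\delta=\overline{\phi\otimes\id}\circ\delta^r$, and on the right one composes the extensions via \eqref{barcomp} to get $\overline{\,\overline{\id\otimes q^r}\circ\epsilon\,}\circ\phi=\overline{\epsilon^r}\circ\phi$; no normality is used here. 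The real content is the reverse implication, and normality of both $(B,\delta)$ and $(C,\epsilon)$ is genuinely used for it: the former through the identity $\delta=(\delta^r)^Q$ (Proposition~\ref{propred}(a)), the latter through faithfulness of the canonical representation $j_C$ of $C$ in $M(C\times_\epsilon G)$.

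For $\Leftarrow$, assume $\overline{\phi\otimes\id}\circ\delta^r=\overline{\epsilon^r}\circ\phi$, and let $(j_C,j_G^C)$ be the canonical covariant pair of the standard crossed product of $(C,\epsilon)$. Since $(C,\epsilon)$ is normal, $j_C$ is faithful, and by Proposition~\ref{propred}(b) the pair $(j_C,j_G^C)$ is covariant for $(C,\epsilon^r)$ as well (its standard crossed product being the same algebra, by~(c)). The first step is to check that $(\overline{j_C}\circ\phi,\,j_G^C)$ is a covariant representation of the reduced coaction $(B,\delta^r)$: feeding the hypothesis into the covariance relation of $(j_C,j_G^C)$ for $(C,\epsilon^r)$ (extended to $M(C)$ and evaluated at $\phi(b)$), and rearranging barred tensor products via the identities of Appendix~\ref{secbarring}, produces precisely the covariance identity for $(\overline{j_C}\circ\phi,j_G^C)$ relative to $(B,\delta^r)$. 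By Proposition~\ref{propred}(b), $(\overline{j_C}\circ\phi,j_G^C)$ is then covariant for the full coaction $(B,\delta)$; writing out that identity and comparing it with the covariance of $(j_C,j_G^C)$ for $(C,\epsilon)$ (again extended to $M(C)$ and evaluated at $\phi(b)$) yields $\overline{j_C\otimes\id}\bigl(\overline{\phi\otimes\id}(\delta(b))\bigr)=\overline{j_C\otimes\id}\bigl(\overline\epsilon(\phi(b))\bigr)$ for every $b\in B$. Cancelling $\overline{j_C\otimes\id}$ gives $\overline{\phi\otimes\id}\circ\delta=\overline\epsilon\circ\phi$.

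The one genuine obstacle is this final cancellation, namely the injectivity of $\overline{j_C\otimes\id}:M(C\otimes C^*(G))\to M\bigl((C\times_\epsilon G)\otimes C^*(G)\bigr)$ — which is where normality of $(C,\epsilon)$ pays off. Since $j_C$ is faithful and nondegenerate, $j_C\otimes\id$ is faithful with nondegenerate range $j_C(C)\otimes C^*(G)$ inside $(C\times_\epsilon G)\otimes C^*(G)$, and the multiplier extension of a faithful nondegenerate homomorphism is again faithful; this is exactly the sort of barring subtlety catalogued in Appendix~\ref{secbarring}, and should be quoted from there rather than re-derived. Everything else is routine bookkeeping with barred tensor products. (Alternatively, once one knows $(\overline{j_C}\circ\phi,j_G^C)$ is covariant for $(B,\delta^r)$, one can pass to $(B,\delta)$ using the characterization \eqref{quiggdef} of $(\delta^r)^Q=\delta$ directly instead of Proposition~\ref{propred}(b); the two routes are essentially the same.)
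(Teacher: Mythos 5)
Your proposal is correct and follows essentially the same route as the paper: the forward direction by pushing $\overline{\id\otimes q^r}$ through with \eqref{N2}, and the reverse by showing $(\overline{j_C}\circ\phi,j_G^C)$ is covariant for $(B,\delta^r)$, upgrading to $(B,\delta)$ via Proposition~\ref{propred}(b), comparing with the strictly continuous extension of the covariance of $(j_C,j_G^C)$ for $(C,\epsilon)$, and cancelling $\overline{j_C\otimes\id}$ using normality of $\epsilon$. The only trivial quibbles are that the composition of extensions in the forward step is \eqref{barcompinC*} rather than \eqref{barcomp}, and that the injectivity of the multiplier extension of a faithful nondegenerate homomorphism, while standard, is not literally stated in Appendix~\ref{secbarring}.
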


\begin{proof}
The forward implication follows from another application of \eqref{N2}. So  suppose that $\overline{\phi\otimes\id}\circ\delta^r=\new{\overline{\epsilon^r}}\circ\phi$.  We know from Proposition~\ref{propred}(c) that $(j_C,j_G^C)$ is a covariant representation of $(C,\epsilon^r)$, and a calculation using \eqref{N1} shows that $(\new{\overline{j_C}}\circ\phi,j_G^C)$ is a covariant representation of $(B,\delta^r)$. Now Proposition~\ref{propred}(b) implies that $(\new{\overline{j_C}}\circ\phi,j_G^C)$ is a covariant representations of $(B,\delta)$, and hence
\begin{align}
\overline{j_C\otimes\id}\big(\overline{\phi\otimes\id}\circ\delta(b)\big)
&=\overline{(\new{\overline{j_C}}\circ \phi)\otimes\id}\circ\delta(b)\quad \text{(using \eqref{N1}\,)}\notag\\
&=\Ad\overline{j_G^C\otimes\id}(w_G)(\new{\overline{j_C}}(\phi(b))\otimes 1).\label{extcov}
\end{align}
The map $m\mapsto \Ad\overline{j_G^C\otimes\id}(w_G)(\new{\overline{j_C}}(m)\otimes 1)$ is the strictly continuous extension of
\[
c\mapsto \Ad\overline{j_G^C\otimes\id}(w_G)(j_C(c)\otimes 1)=\overline{j_C\otimes{\id}}\circ \epsilon,
\]
and hence coincides with $\overline{j_C\otimes{\id}}\circ \overline\epsilon$. Thus \eqref{extcov} gives
\[
\overline{j_C\otimes\id}\big(\overline{\phi\otimes\id}\circ\delta(b)\big)=\overline{j_C\otimes{\id}}\circ \overline\epsilon(\phi(b)),
\]
which since $\epsilon$ is normal implies that $\overline{\phi\otimes\id}\circ\delta=\overline\epsilon\circ\phi$.
\end{proof}

Using Lemma~\ref{cov=cov_r} and the corresponding properties of morphisms in $\cscoactndn(G)$, we can see that the composition of two morphisms in $\cscoactndr(G)$ is a morphism in $\cscoactndr(G)$. The other necessary properties of composition follow from their counterparts in $\cscatnd$. Lemma~\ref{cov=cov_r} also tells us that $\Red(\phi):=\phi$ maps morphisms in $\cscoactndn(G)$ to morphisms in $\cscoactndr(G)$, and because composition is defined the same way in both categories, $\Red$ is a functor.

\begin{thm}\label{bashatbplate} Suppose that $G$ is a locally compact group.

\smallskip
\textnormal{(a)} The functor $\Red:\cscoactndn(G)\to \cscoactndr(G)$ such that
\[
\Red(B,\delta):=(B,\delta^r):=(B,\overline{\id\otimes\, {q^r}}\circ\delta)\quad\text{and}\quad \Red(\phi):=\phi
\]
is an isomorphism of categories. The inverse $\Red^{-1}$ takes a reduced coaction $(B,\delta)$ to the normal coaction $(B,\quigg)$ characterised by \eqref{quiggdef}, and $\Red^{-1}(\phi)=\phi$.

\smallskip
\textnormal{(b)} The functor $\Red$ takes a full dual system $(A\times_{\alpha, r}G,\hat\alpha^n)$ to the reduced dual system $(A\times_{\alpha, r}G,\hat\alpha^r)$, and the inverse $\Red^{-1}$ takes the reduced dual system back to the full dual system.
\end{thm}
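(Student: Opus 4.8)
The plan for part~(a) is to assemble the pieces already in hand and exhibit an explicit inverse functor. The discussion preceding the theorem has shown that $\Red$ is a functor; what remains is to check that the assignment $\Red^{-1}$, defined on objects by $(B,\epsilon)\mapsto(B,\epsilon^Q)$ and on morphisms by $\phi\mapsto\phi$, is again a functor and a two-sided inverse of $\Red$. That $\Red^{-1}$ sends objects to objects is the assertion in Proposition~\ref{propred}(a) that each Quiggification $\epsilon^Q$ is a normal coaction, and that it sends morphisms to morphisms is the reverse implication of Lemma~\ref{cov=cov_r} applied with $\delta:=\epsilon^Q$ (using $(\epsilon^Q)^r=\epsilon$ from Proposition~\ref{propred}(a)): if $\phi$ is a morphism $(B,\epsilon_1)\to(C,\epsilon_2)$ in $\cscoactndr(G)$, then $\overline{\phi\otimes\id}\circ(\epsilon_1^Q)^r=\overline{(\epsilon_2^Q)^r}\circ\phi$, so the lemma gives $\overline{\phi\otimes\id}\circ\epsilon_1^Q=\overline{\epsilon_2^Q}\circ\phi$. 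Since composition in both $\cscoactndr(G)$ and $\cscoactndn(G)$ is the composition $\overline\psi\circ\phi$ inherited from $\cscatnd$, and both $\Red$ and $\Red^{-1}$ act as the identity on morphisms, both preserve identities and composition.

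I would then observe that $\Red^{-1}\circ\Red$ and $\Red\circ\Red^{-1}$ are the identity functors on $\cscoactndn(G)$ and $\cscoactndr(G)$ respectively: on objects this is precisely the bijection statement of Proposition~\ref{propred}(a) that $\Red$ is a bijection with inverse the Quiggification, and on morphisms it is trivial because both functors act as the identity. This proves~(a), the stated characterisation of $\Red^{-1}$ being the one recorded in~\eqref{quiggdef}.

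For part~(b) the underlying $C^*$-algebra is left unchanged by $\Red$, so the content is the identity $\overline{\id\otimes q^r}\circ\hat\alpha^n=\hat\alpha^r$ of coactions on $A\times_{\alpha,r}G$. I would prove this by a direct computation. Since $q^r\colon C^*(G)\to C^*_r(G)$ is surjective, $\id\otimes q^r$ is nondegenerate, so composing the integrated form $\hat\alpha^n=(i_A^r\otimes 1)\times_r(i_G^r\otimes k_G)$ with $\overline{\id\otimes q^r}$ yields, by the composition rules for barred homomorphisms from Appendix~\ref{secbarring} (used via \eqref{N1} and \eqref{barcomp} in the proof of Proposition~\ref{propred}), the integrated form of the covariant pair $\bigl(\overline{\id\otimes q^r}\circ(i_A^r\otimes 1),\,\overline{\id\otimes q^r}\circ(i_G^r\otimes k_G)\bigr)$. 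The first entry equals $i_A^r\otimes 1$ because $\overline{q^r}(1)=1$, and the second equals $i_G^r\otimes k_G^r$ because $\overline{q^r}\circ k_G=k_G^r$; hence the composite is $(i_A^r\otimes 1)\times_r(i_G^r\otimes k_G^r)=\hat\alpha^r$. (Alternatively one can simply quote that $\hat\alpha^n$ is the normalisation and $\hat\alpha^r$ the reduction of $\hat\alpha$, together with Quigg's theorem that reduction factors as reduce-after-normalise, so that $\hat\alpha^r=(\hat\alpha^n)^r=\Red(\hat\alpha^n)$.) The last clause of~(b), that $\Red^{-1}$ carries the reduced dual system back to the full one, is then immediate from $\Red^{-1}\circ\Red=\Id$.

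I expect the only real obstacle to be bookkeeping rather than mathematics: one must keep track that every bar occurring in $\overline{\id\otimes q^r}\circ\hat\alpha^n$ is the canonical extension of Appendix~\ref{secbarring} and not an ordinary tensor product of homomorphisms, and that the integrated-form identity genuinely survives passage through the quotient $q^r$ used to form $\times_r$ (the relevant kernels being respected). Both points are routine given the appendix and Section~2, but they are exactly the sort of ``minor gap'' the paper sets out to fill, so I would write the computation in~(b) out carefully rather than wave at it.
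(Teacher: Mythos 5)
Your proposal is correct and follows essentially the same route as the paper: part (a) is the object bijection of Proposition~\ref{propred}(a) combined with Lemma~\ref{cov=cov_r} for morphisms, and part (b) is the same computation of $\overline{\id\otimes q^r}\circ\hat\alpha^n$ pushed through the reduced integrated form (the paper handles your "kernels being respected" concern by citing Lemma~\ref{barintform}), with the final clause again just the property of inverses. No gaps; the only cosmetic difference is that the paper names Lemma~\ref{barintform} explicitly where you invoke the appendix composition rules and promise to write the check out.
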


\begin{proof}
Proposition~\ref{propred} says that $\Red$ is a bijection on objects, with inverse as described. Now Lemma~\ref{cov=cov_r} says that $\Red$ is surjective and injective on morphisms. This gives (a).

For (b), we compute the reduction using Lemma~\ref{barintform}:
\begin{align*}
(\hat\alpha^n)^r=\big((i_A^r\otimes 1)\times_r(i_G^r\otimes k_G)\big)^r
&=\overline{\id\otimes\, {q^r}}\circ\big((i_A^r\otimes 1)\times_r(i_G^r\otimes k_G)\big)\\
&=\big(\overline{\id\otimes\, {q^r}}\circ(i_A^r\otimes 1)\big)\times_r\big(\overline{\id\otimes\, {q^r}}\circ(i_G^r\otimes k_G)\big)\\
&=(i_A^r\otimes 1)\times_r(i_G^r\otimes k_G^r)=\hat\alpha^r.
\end{align*}
That $\Red^{-1}$ does what is claimed is just the elementary property of inverses.
\end{proof}

\section{The boilerplate for normalising}\label{secnormalising}

We begin by summing up the properties of the normalisation of a coaction $(B,\delta)$. We prefer to view the normalisation as a coaction on a quotient $B^n$ of $B$. This differs from the original description in \cite{Q2}, where the normalisation acts on the canonical image $j_B(B)$ of $B$ in $M(B\times_\delta G)$; it also differs from the more recent discussion in \cite{kqcat}, where a normalisation is a normal system with a distinguished morphism from $(B,\delta)$ \new{that} induces an isomorphism on crossed products.

\begin{prop}\label{propnormal}
Suppose that $(B,\delta)$ is a full coaction of a locally compact group $G$. Let $q^n$ be the quotient map from $B$ to $B^n:=B/\ker j_B$, and let $j_B^n:B^n\to M(B\times_\delta G)$ be the injection such that $j_B=j_B^n\circ q^n$.

\smallskip
\textnormal{(a)} There is a unique homomorphism $\delta^n:B^n\to M(B^n\otimes C^*(G))$ such that 
\begin{equation}\label{eq-q^n}\delta^n\circ q^n=\overline{q^n\otimes\id}\circ\delta,
\end{equation} 
and $\delta^n$ is a normal coaction of $G$ on $B^n$, called the \emph{normalisation of $\delta$}.

\smallskip
\textnormal{(b)} If $(\pi,\mu)$ is a covariant representation of $(B^n,\delta^n)$, then $(\pi\circ q^n,\mu)$ is a covariant representation of $(B,\delta)$. Conversely, if $(\rho,\mu)$ is a covariant representation of $(B,\delta)$, then $\ker\rho\supset\ker j_B$, and if $\rho^n$ is the representation of $B^n$ such that $\rho=\rho^n\circ q^n$, then $(\rho^n,\mu)$ is a covariant representation of $(B^n,\delta^n)$.

\smallskip
\textnormal{(c)}  The triple $(B^n\times_{\delta^n}G,j_{B^n}\circ q^n,j_G)$ is a crossed product for $(B,\delta)$, and the triple $(B\times_\delta G,j_B^n,j_G)$ is a crossed product for $(B^n,\delta^n)$.

\smallskip
\textnormal{(d)} Suppose that $N$ is a closed normal subgroup of $G$, that $q:G\to G/N$ is the quotient map, and that $\pi_q$ is the integrated form of $k_{G/N}\circ q$. Then $\delta|:=\overline{{\id}\otimes \pi_q}\circ \delta$ is a coaction of $G/N$ on $B$, called the \emph{restriction of $\delta$ to $G/N$}. Write $j_G|$ for the restriction of $\new{\overline{j_G}}:C_b(G)\to M(B\times_\delta G)$ to $C_0(G/N)$. Then $(j_B,j_G|)$ is a covariant representation of $(B,\delta|)$, and if $j_B\times j_G|:B\times_{\delta|}(G/N)\to M(B\times_\delta G)$ is injective, then normalisation commutes with restriction, in the sense that $(\delta|)^n=(\delta^n)|$.
\end{prop}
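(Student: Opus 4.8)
The plan is to deduce the identity $(\delta|)^n=(\delta^n)|$ from the uniqueness clause of part~(a), applied to the coaction $\delta|$ of $G/N$ on $B$. Before this can even make sense I would first have to check that the two sides are maps with the same domain and codomain: $(\delta^n)|=\overline{{\id}\otimes\pi_q}\circ\delta^n$ is a homomorphism from $B^n$ to $M(B^n\otimes C^*(G/N))$, whereas $(\delta|)^n$ is by definition a coaction on $B/\ker j_B^{\delta|}$, where $j_B^{\delta|}$ denotes the canonical homomorphism of $B$ into $M(B\times_{\delta|}(G/N))$. So the first step is to show that the hypothesis forces $\ker j_B^{\delta|}=\ker j_B$, so that both normalisations live on $B^n=B/\ker j_B$ with the same quotient map $q^n$. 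The inclusion $\ker j_B^{\delta|}\subseteq\ker j_B$ is automatic: since $(j_B,j_G|)$ is a covariant representation of $(B,\delta|)$, its integrated form $j_B\times j_G|$ satisfies $j_B=\overline{j_B\times j_G|}\circ j_B^{\delta|}$. For the reverse inclusion I would use that $j_B\times j_G|$ is nondegenerate (being an integrated form) and, by hypothesis, injective, so that its extension to $M(B\times_{\delta|}(G/N))$ is again injective (if $\overline{j_B\times j_G|}(m)=0$ then $(j_B\times j_G|)(ma)=\overline{j_B\times j_G|}(m)(j_B\times j_G|)(a)=0$, hence $ma=0$, for every $a$); applying this to $m=j_B^{\delta|}(b)$ turns $j_B(b)=0$ into $j_B^{\delta|}(b)=0$.

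Once the algebras and quotient maps are identified, part~(a) applied to $\delta|$ tells us that $(\delta|)^n$ is the unique homomorphism on $B^n$ with $(\delta|)^n\circ q^n=\overline{q^n\otimes{\id}}\circ\delta|$, so it suffices to verify that $(\delta^n)|$ satisfies this equation. I would compute, using \eqref{eq-q^n},
\[
(\delta^n)|\circ q^n=\overline{{\id}\otimes\pi_q}\circ\delta^n\circ q^n=\overline{{\id}\otimes\pi_q}\circ\overline{q^n\otimes{\id}}\circ\delta,
\]
and then commute the two bars past one another — legitimate because ${\id}\otimes\pi_q$ acts on the group factor while $q^n\otimes{\id}$ acts on the coefficient factor; this is the sort of bookkeeping handled by \eqref{N2} and the lemmas of Appendix~\ref{secbarring} (the two composites agree on the strictly dense subalgebra $B\otimes C^*(G)$ and both are strictly continuous on bounded sets). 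The right-hand side then becomes $\overline{q^n\otimes{\id}}\circ\overline{{\id}\otimes\pi_q}\circ\delta=\overline{q^n\otimes{\id}}\circ\delta|$, which is exactly the characterising equation for $(\delta|)^n$; uniqueness in part~(a) finishes the argument.

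The only place the hypothesis is used is the reverse kernel inclusion in the first paragraph, and this is really the crux: without it the conclusion is not even well-posed, since $(\delta|)^n$ and $(\delta^n)|$ would be coactions on different quotients of $B$. I expect that to be the main (though fairly mild) obstacle — recognising that injectivity of $j_B\times j_G|$ on the crossed product is exactly enough to force injectivity of the relevant extension to the multiplier algebra. Everything afterwards is formal manipulation with bars, of the same flavour as in the proof of Proposition~\ref{propred}.
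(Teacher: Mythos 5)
Your proposal addresses only the final assertion of part~(d), the identity $(\delta|)^n=(\delta^n)|$; the statement in question is the whole Proposition, and you give no argument for (a), (b), (c), or for the earlier claims of (d) (that $\delta|$ is a coaction of $G/N$ and that $(j_B,j_G|)$ is covariant for $(B,\delta|)$ --- the latter is Lemma~\ref{covrestricts} in the paper and requires a genuine computation identifying $\overline{\mu\otimes\id}\circ\overline{\id\otimes\pi_q}(w_G)$ with $\overline{\mu|\otimes\id}(w_{G/N})$). This is not just a matter of coverage: your own argument leans on two of the omitted items, namely the existence/uniqueness clause of (a) applied to the coaction $\delta|$, and the covariance of $(j_B,j_G|)$, which is what produces the integrated form $j_B\times j_G|$ and the identity $j_B=\overline{j_B\times j_G|}\circ j_B^{G/N}$ that your kernel argument starts from. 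For the record, the paper proves (a) by pulling back Quigg's normal coaction $\delta^{j_G}$ on $j_B(B)$ (\cite[Propositions 2.3 and~2.5]{Q2}) through the isomorphism $j_B^n$ and deducing \eqref{eq-q^n} from the covariance of $(j_B,j_G)$; (b) is a direct computation from \eqref{eq-q^n} together with the factorisation $\rho=\overline{\rho\times\mu}\circ j_B$; and (c) combines \cite[Proposition~2.6]{Q2} with (b).

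For the portion you do treat, your argument is correct and is essentially the paper's: reduce to showing $\ker j_B^{G/N}=\ker j_B$ so that both normalisations live on $B^n$ with the same quotient map $q^n$, check that $(\delta^n)|$ satisfies the characterising equation $(\delta|)^n\circ q^n=\overline{q^n\otimes\id}\circ\delta|$ by combining \eqref{eq-q^n} with the commutation \eqref{N2}, and invoke uniqueness from (a). The one point where you diverge is the kernel identification: you prove directly that an injective nondegenerate homomorphism has injective extension to multipliers and apply this to $j_B\times j_G|$, whereas the paper notes that injectivity makes $(B\times_{\delta,r}(G/N),j_B,j_G|)$ itself a crossed product for $(B,\delta|)$, so that $\ker j_B^{G/N}=\ker j_B$ follows from the canonical isomorphism of any two crossed products. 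Both routes are valid and of comparable length; yours is slightly more self-contained, the paper's reuses machinery already set up in Section~2.
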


\begin{remark} If we start with a normal coaction $(B,\delta)$, then $j_B$ is injective, $B/\ker j_B=B$ and $(B^n,\delta^n)=(B,\delta)$, so that $(B,\delta)$ is its own normalisation.
\end{remark}

\begin{proof}
Quigg proved in  \cite[Propositions 2.3 and~2.5]{Q2} that the formula
\begin{equation*}
\delta^{j_G}(j_B(b)):=\Ad\overline{j_G\otimes\id}(w_G)(j_B(b)\otimes 1)
\end{equation*}
defines a normal coaction $\delta^{j_G}$ on $j_B(B)$. Since $j_B^n$ is an isomorphism of $B^n$ onto $j_B(B)$, we can pull $\delta^{j_G}$ back to a normal coaction $\delta^n$ on $B^n$ such that
\begin{equation}\label{defnormal}
\overline{j_B^n\otimes\id}(\delta^n(q^n(b)))=\Ad\overline{j_G\otimes\id}(w_G)(j_B(b)\otimes 1).
\end{equation}
Now we compute, using the covariance of $(j_B,j_G)$ and \eqref{N1}:
\begin{align*}
\overline{j_B^n\otimes\id}(\delta^n(q^n(b)))&=\Ad\overline{j_G\otimes\id}(w_G)(j_B(b)\otimes 1)
=\overline{j_B\otimes\id}(\delta(b))\\
&=\overline{(\new{\overline {j_B^n}}\circ q^n)\otimes\id}(\delta(b))
=\overline{j_B^n\otimes\id}\circ\overline{q^n\otimes\id}(\delta(b));
\end{align*}
since $j_B^n\otimes\id$ is injective on $B^n\otimes C^*(G)$, this implies that $\delta^n\circ q^n=\overline{q^n\otimes\id}\circ\delta$. The uniqueness follows from the surjectivity of $q^n$, and we already know that $\delta^n$ is a normal coaction, so we have proved (a).

For (b), suppose first that $(\pi,\mu)$ is a covariant representation of $(B^n,\delta^n)$. Then the calculation
\begin{align*}
\overline{(\pi\circ q^n)\otimes\id}\circ \delta(b)
&=\overline{\pi\otimes \id}\circ\overline{q^n\otimes \id}\circ\delta(b)\\
&=\overline{\pi\otimes \id}\circ\delta^n\circ q^n(b)\quad\text{(using (a)\,)}\\
&=\Ad\overline{\mu\otimes\id}(w_G)(\pi\circ q^n(b)\otimes 1)
\end{align*}
shows that $(\pi\circ q^n,\mu)$ is covariant for $(B,\delta)$. Now suppose that $(\rho,\mu)$ is a covariant representation of $(B,\delta)$. Then the identity $\rho=\overline{\rho\times\mu}\circ j_B$ implies that $\ker\rho\supset \ker j_B$, so we can factor $\rho=\rho^n\circ q^n$ and compute:
\begin{align*}
\overline{\rho^n\otimes\id}\circ \delta^n(q^n(b))
&=\overline{\rho^n\otimes \id}\circ\overline{q^n\otimes\id}\circ\delta(b)\quad\text{(using (a)\,)}\\
&=\overline{(\rho^n\circ q^n)\otimes \id}\circ\delta(b)=\overline{\rho\otimes \id}\circ\delta(b)\\
&=\Ad\overline{\mu\otimes\id}(w_G)(\rho(b)\otimes 1)\\
&=\Ad\overline{\mu\otimes\id}(w_G)(\rho^n(q^n(b))\otimes 1),
\end{align*}
which since $q^n$ is surjective implies that $(\rho^n,\mu)$ is covariant for $(B^n,\delta^n)$.

The first assertion in part (c) is proved in \cite[Proposition~2.6]{Q2}. For the second, suppose that $(\pi,\mu)$ is a covariant representation of $(B^n,\delta^n)$. Part (b) implies, first, that $(j_B^n,\mu)$ is covariant for $(B^n,\delta^n)$, and, second,  that $(\pi\circ q^n,\mu)$ is covariant for $(B,\delta)$. Then  $\overline{(\pi\circ q^n)\times \mu}\circ j_G=\mu$ and $\overline{(\pi\circ q^n)\times \mu}\circ j_B=\pi\circ q^n$, which gives
\[
\overline{(\pi\circ q^n)\times \mu}\circ j_B^n(q^n(b))=\overline{(\pi\circ q^n)\times \mu}\circ j_B=\pi(q^n(b)).
\]
Thus $(\pi\circ q^n)\times \mu$ has the properties required of $\pi\times \mu$.

The assertion that $\delta|$ is a coaction is discussed in \cite[Example~A.28]{enchilada}, and it follows from Lemma~\ref{covrestricts} that $(j_B,j_G|)$ is covariant for $(B,G/N,\delta|)$.

Now we consider the standard crossed product $(B\times_{\delta|} (G/N), j_B^{G/N}, j_{G/N})$ for $(B,\delta|)$, and let $q^n_{G/N}:B\to B/\ker{j_B^{G/N}}$ be the quotient map.  By Proposition~\ref{propnormal}(a), $(\delta|)^n$ is the unique homomorphism such that
\begin{equation}
\label{!normalisedrestriction}
(\delta|)^n\circ q^n_{G/N}=\overline{q^n_{G/N}\otimes\id}\circ\delta|.
\end{equation}
By assumption, $j_B\times j_G|$ is an isomorphism of $B\times_{\delta|}(G/N)$ onto its range
\[B\times_{\delta, r}(G/N):=\clsp\{j_B(b)\overline{j_G}|(f):b\in B, f\in C_0(G/N)\}\subset M(B\times_\delta G).\]
(This is the crossed product of $B$ by the homogeneous space $G/N$ discussed in \S\ref{subsecMansfield} below). So $(B\times_{\delta, r}(G/N), j_B, j_G|)$ is a crossed product for $(B,\delta|)$, and hence $\ker{j_B^{G/N}}=\ker j_B$ and $q^n_{G/N}=q^n:B\to B/\ker j_B$. Now
\begin{align*}
(\delta^n)|\circ q^n_{G/N}
&=
(\delta^n)|\circ q^n=
\big(\overline{{\id}\otimes \pi_q}\circ\delta^n\big)\circ q^n\\
&=
\overline{{\id}\otimes \pi_q}\circ\big(\delta^n\circ q^n\big)
=\overline{{\id}\otimes \pi_q}\circ\overline{q^n\otimes \id}\circ\delta\\
&=\overline{q^n\otimes \id}\circ\overline{{\id}\otimes \pi_q}\circ\delta
=\overline{q^n\otimes\id}\circ\delta|\\
&=\overline{q_{G/N}^n\otimes\id}\circ\delta| \quad\text{(since $q^n_{G/N}=q^n$)},
\end{align*}
and comparing with \eqref{!normalisedrestriction} gives $(\delta|)^n=(\delta^n)|$.
\end{proof}

We now state our result on the categorical properties of normalisation, which is taken mainly from \cite{kqcat}.

\begin{thm}\label{bplatenor} Suppose that $G$ is a locally compact group.

\smallskip
\textnormal{(a)} If $\phi:(B,\delta)\to (C,\epsilon)$ is a morphism in $\cscoactnd(G)$ there is a unique morphism $\phi^n:(B^n,\delta^n)\to (C^n,\epsilon^n)$ such that $\phi^n\circ q^n=\new{\overline{q^n}}\circ\phi$, and then the assignments
\[
(B,\delta)\mapsto (B^n,\delta^n)\quad\text{ and}\quad\phi\mapsto\phi^n
\]
constitute a functor $\Nor:\cscoactnd(G)\to \cscoactndn(G)$.

\smallskip
\textnormal{(b)} Let $\Inc:\cscoactndn(G)\to \cscoactnd(G)$ be the inclusion functor. Then $\Nor\circ\Inc$ is the identity functor $\Id$ on $\cscoactndn(G)$. If $(B,\delta)$ is an object in $\cscoactnd(G)$ and $(C,\epsilon)$ is an object in $\cscoactndn(G)$, then the map $\phi\mapsto\phi^n$ is a bijection from $\Mor((B,\delta),(C,\epsilon))$ to $\Mor((B^n,\delta^n),(C,\epsilon))$, and these bijections are natural in both variables.

\smallskip
\textnormal{(c)} Every full dual system $(A\times_{\alpha,r}G,\hat\alpha^n)$  is normal, and is the normalisation of the dual system $(A\times_{\alpha}G,\hat\alpha)$.
\end{thm}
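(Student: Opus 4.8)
The plan is to take (a) as the substantive part and to deduce (b) and (c) almost formally from it; most of this is already in \cite{kqcat}. For a full coaction $(D,\zeta)$ write $q^n_D\colon D\to D^n=D/\ker j_D$ for the quotient map, so that $j_D=j_D^n\circ q^n_D$ with $j_D^n$ injective; since $\zeta$ is nondegenerate, $j_D=\overline{\id\otimes\pi_\lambda}\circ\zeta$ is a nondegenerate homomorphism into $M(D\otimes\K(L^2(G)))$, hence so is $j_D^n$, whose extension to $M(D^n)$ is therefore faithful.

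For (a), the first step is to build $\phi^n$. Feeding the morphism equation $\overline{\phi\otimes\id}\circ\delta=\overline\epsilon\circ\phi$ through $\overline{\id\otimes\pi_\lambda}$ — using the elementary compatibilities of extensions to multiplier algebras from Appendix~\ref{secbarring}, together with the fact that $\phi$ on the first leg and $\pi_\lambda$ on the second leg may be applied in either order — I would derive
\[
\overline{j_C}\circ\phi=\overline{\phi\otimes\id_{\K(L^2(G))}}\circ j_B .
\]
In particular $j_B(b)=0$ forces $\overline{j_C}(\phi(b))=0$, and faithfulness of $\overline{j_C^n}$ then gives $\ker j_B\subseteq\ker\big(\overline{q^n_C}\circ\phi\big)$; since $q^n_B$ is surjective there is a unique nondegenerate homomorphism $\phi^n\colon B^n\to M(C^n)$ with $\phi^n\circ q^n_B=\overline{q^n_C}\circ\phi$. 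To check that $\phi^n$ intertwines $\delta^n$ and $\epsilon^n$ I would precompose $\overline{\phi^n\otimes\id}\circ\delta^n$ and $\overline{\epsilon^n}\circ\phi^n$ with the surjection $q^n_B$: using $\delta^n\circ q^n_B=\overline{q^n_B\otimes\id}\circ\delta$ and the analogous relation for $C$ (both from Proposition~\ref{propnormal}(a)), both composites reduce, after a routine rearrangement of bars, to $\overline{q^n_C\otimes\id}\circ\overline\epsilon\circ\phi$. Uniqueness of $\phi^n$ is immediate from surjectivity of $q^n_B$, and functoriality ($\id_B\mapsto\id_{B^n}$, and $(\psi\circ\phi)^n=\psi^n\circ\phi^n$) is checked the same way, by composing with $q^n_B$ and using that composition in both $\cscoactnd(G)$ and $\cscoactndn(G)$ is ordinary composition of extended homomorphisms. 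This proves (a).

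For (b): a normal coaction is its own normalisation (the Remark following Proposition~\ref{propnormal}), so $q^n$ is the identity on objects of $\cscoactndn(G)$, and the defining relation then forces $\phi^n=\phi$ on a morphism between normal coactions; hence $\Nor\circ\Inc=\Id$. Now fix $(B,\delta)$ in $\cscoactnd(G)$ and $(C,\epsilon)$ in $\cscoactndn(G)$, so $C^n=C$ and $q^n_C=\id$. I would show that $\psi\mapsto\psi\circ q^n_B$ sends a morphism $(B^n,\delta^n)\to(C,\epsilon)$ to a morphism $(B,\delta)\to(C,\epsilon)$ — the coaction identity here being exactly Proposition~\ref{propnormal}(a) — and that it is inverse to $\phi\mapsto\phi^n$: the uniqueness in (a) gives $(\psi\circ q^n_B)^n=\psi$, while conversely $\phi^n\circ q^n_B=\overline{q^n_C}\circ\phi=\phi$. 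Naturality in $(B,\delta)$ is then just functoriality of $\Nor$ applied to morphisms into $(B,\delta)$, and naturality in $(C,\epsilon)$ follows because a morphism $\eta$ between normal coactions satisfies $\eta^n=\eta$, so $(\eta\circ\phi)^n=\eta^n\circ\phi^n=\eta\circ\phi^n$.

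For (c), I would invoke the identification recorded above in the discussion of dual coactions: $\hat\alpha^n$ is the normalisation of $\hat\alpha$ (\cite[Proposition~A.61]{enchilada}). In the language of this section this says precisely that the canonical quotient $(A\times_\alpha G)/\ker j_{A\times_\alpha G}$ may be taken to be $A\times_{\alpha,r}G$ and that the induced normal coaction is $\hat\alpha^n$; normality of $\hat\alpha^n$ is then automatic, since by Proposition~\ref{propnormal}(a) every normalisation is normal. The step I expect to be most delicate is the identity $\overline{j_C}\circ\phi=\overline{\phi\otimes\id_{\K(L^2(G))}}\circ j_B$ and, more broadly, the verification that $\phi^n$ intertwines $\delta^n$ and $\epsilon^n$: the maps $\overline{\phi\otimes\id}$, $\overline{\id\otimes\pi_\lambda}$ and the like are not ordinary tensor-product homomorphisms, so one has to appeal carefully to the barring conventions of Appendix~\ref{secbarring}. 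Part (c), by contrast, carries essentially no content beyond \cite[Proposition~A.61]{enchilada}.
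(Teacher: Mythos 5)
Your argument is correct, and parts (b) and (c) run essentially as in the paper; the difference is in how you get the heart of (a). The paper first isolates a universal property (Lemma~\ref{uniquepropnormal}): $q^n$ is a morphism and every morphism $\psi$ from $(B,\delta)$ to a \emph{normal} coaction factors uniquely through $q^n$, the key step being that $(\overline{j_C}\circ\psi,j_G^C)$ is a covariant representation of $(B,\delta)$, so that $\overline{j_C}\circ\psi$ factors through $j_B$ and normality gives $\ker\psi\supset\ker j_B$; Theorem~\ref{bplatenor}(a) is then obtained by applying this to $\psi=\overline{q^n}\circ\phi$, and the same lemma is reused later (Proposition~\ref{norvsKQnor}, Corollary~\ref{natisonor}). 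You instead work concretely in $M(C\otimes\K(L^2(G)))$: from the morphism identity and \eqref{N2}, \eqref{barcompinC*} you get $\overline{j_C}\circ\phi=\overline{\phi\otimes\id}\circ j_B$, and then injectivity of $\overline{j_C^n}$ on $M(C^n)$ gives $\ker j_B\subseteq\ker(\overline{q_C^n}\circ\phi)$ directly, with equivariance, uniqueness and functoriality all checked against the surjection $q_B^n$. This is a legitimate and slightly more elementary route (no appeal to covariance or to the universal property of the crossed product), and it has the mild advantage of not needing the target to be normal; what it does not produce explicitly is the reusable initial-object lemma, although your bijection in (b) with inverse $\psi\mapsto\psi\circ q^n$ recovers the same adjunction.

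Two small points you should tighten. First, nondegeneracy of $\phi^n$ is asserted but not checked; it is routine ($\phi^n(B^n)C^n=\overline{q_C^n}(\phi(B))q_C^n(C)\supseteq q_C^n(\phi(B)C)$ is dense), but it is needed for $\phi^n$ to be a morphism. Second, in (c) the citation of \cite[Proposition~A.61]{enchilada} does not quite speak ``the language of this section'' verbatim: to say that $(A\times_{\alpha,r}G,\hat\alpha^n)$ is the normalisation in the sense used here you must identify $\ker j_{A\times_\alpha G}$ with $\ker q^r$, which is where Raeburn's theorem (\cite[Theorem~4.1]{rae}, via the regular representation and the intertwining unitary) enters; the paper spells out exactly this translation, and a sentence to that effect would close the gap.
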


As pointed out in \cite{kqcat}, everything hinges on a universal property of normalising maps. We'll begin by looking carefully at this universal property. The next lemma is similar to Lemma~2.1 of \cite{ekq}, and was stated in \cite[item (ii), page~430]{kqcat}.

\begin{lemma}\label{uniquepropnormal}
Suppose that $(B,G,\delta)$ is a full coaction.   Then the quotient map $q^n:B\to B^n=B/\ker j_B$ is a morphism in $\cscoactnd(G)$ from $(B,\delta)$ to $(B^n,\delta^n)$, and if $\psi:(B,\delta)\to (C,\epsilon)$ is a morphism to a normal coaction, then there is a unique morphism $\theta:(B^n,\delta^n)\to (C,\epsilon)$ such that $\psi=\theta\circ q^n$.
\end{lemma}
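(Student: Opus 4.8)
The plan is to verify the two assertions of Lemma~\ref{uniquepropnormal} in turn: first that $q^n$ is a morphism $(B,\delta)\to(B^n,\delta^n)$ in $\cscoactnd(G)$, and then the universal property with respect to morphisms into normal coactions.

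First I would note that $q^n:B\to B^n$ is by construction a surjective, hence nondegenerate, homomorphism, so to see it is a morphism in $\cscoactnd(G)$ I only need to check the intertwining identity $\overline{q^n\otimes\id}\circ\delta=\overline{\delta^n}\circ q^n$. But $q^n$ is surjective, so $\overline{\delta^n}\circ q^n=\delta^n\circ q^n$, and this equals $\overline{q^n\otimes\id}\circ\delta$ by the defining equation \eqref{eq-q^n} in Proposition~\ref{propnormal}(a). So this half is immediate from the boilerplate.

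For the universal property, suppose $\psi:(B,\delta)\to(C,\epsilon)$ is a morphism with $(C,\epsilon)$ normal. Uniqueness of $\theta$ is clear: $q^n$ is surjective, so any $\theta$ with $\psi=\theta\circ q^n$ is determined by $\psi$ on a dense subset, hence on all of $B^n$ by continuity. For existence, the key point is that $\psi$ kills $\ker j_B$, so that it factors through $q^n$. I would get this by relating $\psi$ to a covariant representation and invoking Proposition~\ref{propnormal}(b): the plan is to produce, from the morphism $\psi$ together with the standard covariant pair $(j_C,j_G^C)$ of $(C,\epsilon)$, a covariant representation $(\rho,\mu):=(\overline{j_C}\circ\psi,\,j_G^C)$ of $(B,\delta)$ (the covariance computation here is the same sort of \eqref{N1}-style calculation used in the proof of Lemma~\ref{cov=cov_r}). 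Since $C$ is normal, $j_C$ is injective, so $\ker\rho=\ker(\overline{j_C}\circ\psi)=\ker\psi$; but Proposition~\ref{propnormal}(b) tells us $\ker\rho\supset\ker j_B$, whence $\ker\psi\supset\ker j_B$ and $\psi$ factors as $\psi=\theta\circ q^n$ for a (nondegenerate) homomorphism $\theta:B^n\to M(C)$.

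It then remains to check that $\theta$ is actually a morphism in $\cscoactnd(G)$, i.e.\ that $\overline{\theta\otimes\id}\circ\delta^n=\overline{\epsilon}\circ\theta$. I would verify this after precomposing with the surjection $q^n$: using $\psi=\theta\circ q^n$, \eqref{eq-q^n}, the functoriality of barring under composition (the identity \eqref{barcomp} and the $\otimes$-versions \eqref{N1}), and the fact that $\psi$ is a morphism, one computes $\overline{\theta\otimes\id}\circ\delta^n\circ q^n=\overline{\theta\otimes\id}\circ\overline{q^n\otimes\id}\circ\delta=\overline{(\theta\circ q^n)\otimes\id}\circ\delta=\overline{\psi\otimes\id}\circ\delta=\overline{\epsilon}\circ\psi=\overline{\epsilon}\circ\theta\circ q^n$, and cancelling the surjection $q^n$ gives the desired identity. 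The main obstacle — really the only nonroutine point — is the bookkeeping with the bar-extensions: making sure that $\overline{\theta\otimes\id}\circ\overline{q^n\otimes\id}=\overline{(\theta\circ q^n)\otimes\id}$ and similar reassociations are legitimate, which is exactly what the appendix on barring tensor products (and the cited relations \eqref{N1}, \eqref{barcomp}) is there to license; everything else is formal.
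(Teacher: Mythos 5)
Your proposal is correct and follows essentially the same route as the paper: $q^n$ is a morphism by \eqref{eq-q^n}, uniqueness comes from surjectivity of $q^n$, existence comes from showing $(\overline{j_C}\circ\psi,j_G^C)$ is covariant for $(B,\delta)$ so that $\ker\psi=\ker(\overline{j_C}\circ\psi)\supset\ker j_B$ (normality giving injectivity of $\overline{j_C}$), and the equivariance of $\theta$ is checked after precomposing with $q^n$ using \eqref{N1}/\eqref{barcompinC*}. The only point you gloss --- extending the covariance relation for $(j_C,j_G^C)$ from $C$ to the multiplier $\psi(b)\in M(C)$ by strict continuity --- is precisely the step the paper spells out, and your pointer to the identical argument in Lemma~\ref{cov=cov_r} covers it.
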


\begin{proof} Equation~\eqref{eq-q^n} says that $q^n$ is a morphism.
Since the equation $\psi=\theta\circ q^n$ determines the value of $\theta(q^n(b))$, there is at most one homomorphism $\theta$ with the required property. We need to show that $\ker \psi$ contains $\ker q^n=\ker j_B$. Since $(C,\epsilon)$ is normal, $\ker\psi=\ker \new{\overline{j_C}}\circ\psi$. Now we compute, using Proposition~\ref{apponbarring} and the equivariance of $\psi$:
\begin{align*}
\overline{(\new{\overline{j_C}}\circ\psi)\otimes\id}\circ\delta(b)
&=\overline{j_C\otimes\id}\circ\overline{\psi\otimes\id}\circ\delta(b)\\
&=\overline{j_C\otimes\id}\circ\overline\epsilon\circ\psi(b)\\
&=\overline{\overline{j_C\otimes\id}\circ\epsilon}\circ\psi(b)\\
&=\Ad\overline{j_G\otimes\id}(w_G)(\new{\overline{j_C}}(\psi(b))\otimes 1),
\end{align*}
where at the last step we again used strict continuity to extend the covariance relation to $M(C)$.
Thus $(\new{\overline{j_C}}\circ\psi,j_G^C)$ is a covariant representation of $(B,\delta)$ in $M(C\times_\delta G)$, and in particular $\new{\overline{j_C}}\circ\psi$ factors through $j_B$. Thus $\ker\psi=\ker\new{\overline{j_C}}\supset \ker j_B$, and there is a homomorphism $\theta:B^n\to M(C)$ such that $\psi=\theta\circ q^n$.

To see that $\theta$ is a morphism in $\cscoactnd(G)$, we use that $\psi$ and $q^n$ are:
\begin{align*}
\overline\epsilon(\theta(q^n(b)))&=\overline\epsilon(\psi(b))=\overline{\psi\otimes\id}(\delta(b))\\
&=\overline{(\theta\circ q^n)\otimes\id}(\delta(b))=\overline{\theta\otimes\id}\circ\overline{q^n\otimes\id}(\delta(b))\\
&=\overline{\theta\otimes\id}\circ\delta^n(q^n(b)).\qedhere
\end{align*}
\end{proof}

\begin{proof}[Proof of Theorem~\ref{bplatenor}]
Applying  Lemma~\ref{uniquepropnormal} to $\psi=\new{\overline{q^n}}\circ\phi$ shows that there is  such a morphism $\phi^n$, and it follows from the uniqueness in Lemma~\ref{uniquepropnormal} that $\phi\mapsto\phi^n$ is functorial. This gives (a).
The first assertion in (b) amounts to the observations that normalisation leaves a normal system unchanged, and that if $\phi$ is a morphism between normal systems, then $\phi^n=\phi$. Since the quotient map $q^n$ for the normal system $(C,\epsilon)$ is the identity, Lemma~\ref{uniquepropnormal} says that $\phi\mapsto \phi^n$ is a bijection with inverse $\psi\mapsto \psi\circ q^n$. The functoriality of $\phi\mapsto\phi^n$ implies that the bijections are natural in $(B,\delta)$, and straightforward applications of the uniqueness in Lemma~\ref{uniquepropnormal} show that they are natural in $(C,\epsilon)$.

Since normalisations are normal, it suffices to prove the second assertion in (c), and this is Proposition~A.61 of \cite{enchilada}. The crucial ingredient in the proof of that proposition is Theorem~4.1 of \cite{rae}, which says that if $\pi\times U$ is a faithful representation of $A\times_\alpha G$ on $\HH$, then the regular representation
\[
\big((\pi\times U)\otimes\lambda)\circ\hat\alpha, 1\otimes M\big))=\big((\pi\otimes 1)\times(U\otimes\lambda),1\otimes M\big)
\]
gives a faithful representation of $(A\times_\alpha G)\times_{\hat\alpha}G$ on $\HH\otimes L^2(G)=L^2(G,\HH)$. This implies that $\ker j_{A\times_\alpha G}=\ker \big((\pi\otimes 1)\times(U\otimes\lambda)\big)$, and since the operator $W$ defined by $(W\xi)(s)=U_s^*(\xi(s)$ intertwines $(\pi\otimes 1)\times(U\otimes\lambda)$ and the regular representation $\widetilde\pi\times(1\otimes \lambda)$ induced from the faithful representation $\pi$ of $A$, we deduce that $(A\times_\alpha G)^n=(A\times_\alpha G)/\ker j_{A\times_\alpha G}$ is the reduced crossed product. This implies $(c)$.
\end{proof}

Our definition of normalisation is, as we pointed out at the start of the section, not quite the same as the one used in \cite{Q2}: we replace the $C^*$-algebra $j_B(B)$ by the isomorphic algebra $B^n=B/\ker j_B$. On the other hand, it looks nothing like the concept of normalisation used in \cite{kqcat} (see \cite[page~423]{kqcat}). The connection is the universal property described in Lemma~\ref{uniquepropnormal}, which says that the pair $((B^n,\delta^n),q^n)$ is an initial object in the comma category $(B,\delta)\downarrow \cscoactndn(G)$. In fact, these initial objects are the normalisations studied in \cite{kqcat}:

\begin{prop}\label{norvsKQnor}
Suppose that $(B,\delta)$ is a full coaction, $(D,\eta)$ is a normal coaction, and $\phi:(B,\delta)\to (D,\eta)$ is a morphism. Suppose that for every morphism $\psi:(B,\delta)\to (C,\epsilon)$ to a normal coaction, there is a unique morphism $\theta:(D,\eta)\to (C,\epsilon)$ such that $\psi=\theta\circ \phi$. Then $\phi(B)=D$, and the map $\phi\times G$ (described in Proposition~\ref{defCPfunctor} below) is an isomorphism of $B\times_\delta G$ onto $D\times_\eta G$.
\end{prop}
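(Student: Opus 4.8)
The plan is to recognize the hypothesis as the assertion that $\bigl((D,\eta),\phi\bigr)$ is an initial object of the comma category $(B,\delta)\downarrow\cscoactndn(G)$, and to compare it with the initial object $\bigl((B^n,\delta^n),q^n\bigr)$ of the same comma category that Lemma~\ref{uniquepropnormal} provides (as noted just after the proof of Theorem~\ref{bplatenor}). Since initial objects are unique up to a unique isomorphism, I would realize the comparison map explicitly: applying Lemma~\ref{uniquepropnormal} to the morphism $\phi:(B,\delta)\to(D,\eta)$, whose target is normal, gives a morphism $\theta:(B^n,\delta^n)\to(D,\eta)$ in $\cscoactndn(G)$ with $\theta\circ q^n=\phi$; applying the hypothesis to the morphism $q^n:(B,\delta)\to(B^n,\delta^n)$, whose target is normal by Proposition~\ref{propnormal}(a), gives a morphism $\theta':(D,\eta)\to(B^n,\delta^n)$ with $\theta'\circ\phi=q^n$. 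Then $(\theta'\circ\theta)\circ q^n=\theta'\circ(\theta\circ q^n)=\theta'\circ\phi=q^n=\id_{B^n}\circ q^n$, so the uniqueness clause of Lemma~\ref{uniquepropnormal} forces $\theta'\circ\theta=\id_{B^n}$; symmetrically, $(\theta\circ\theta')\circ\phi=\phi$ and the uniqueness clause of the hypothesis forces $\theta\circ\theta'=\id_D$. Thus $\theta$ is an isomorphism in $\cscoactndn(G)$ with $\theta\circ q^n=\phi$.

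Next I would upgrade this categorical isomorphism to an ordinary $*$-isomorphism of $C^*$-algebras. If $\theta:B^n\to M(D)$ and $\theta':D\to M(B^n)$ are mutually inverse morphisms in $\cscatnd$, their canonical extensions $\overline\theta:M(B^n)\to M(D)$ and $\overline{\theta'}:M(D)\to M(B^n)$ are mutually inverse $*$-isomorphisms of the multiplier algebras, since each is strictly continuous and extends the relevant inclusion, so uniqueness of extensions gives $\overline{\theta'}\circ\overline\theta=\id_{M(B^n)}$ and $\overline\theta\circ\overline{\theta'}=\id_{M(D)}$. As $B^n$ is an ideal of $M(B^n)$, its image $\theta(B^n)=\overline\theta(B^n)$ is an ideal of $M(D)$; nondegeneracy of $\theta$ gives $\overline{\theta(B^n)\,D}=D$, and applying the isomorphism $\overline\theta$ to the nondegeneracy relation $\overline{\theta'(D)\,B^n}=B^n$ for $\theta'$ gives $\overline{D\,\theta(B^n)}=\theta(B^n)$; taking adjoints of this last identity and comparing with the first yields $\theta(B^n)=D$. (This is the standard fact that isomorphisms in $\cscoactndn(G)$ are honest $C^*$-algebra isomorphisms; one could instead cite \cite{aHRWsurvey}.) Since $q^n$ is surjective onto $B^n$ and $\overline\theta$ restricts to $\theta$ there, $\phi(B)=\overline\theta(q^n(B))=\theta(B^n)=D$, which is the first assertion.

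For the second assertion I would apply the crossed-product functor $-\times G$ of Proposition~\ref{defCPfunctor} to the factorisation $\phi=\theta\circ q^n$, which gives $\phi\times G=(\theta\times G)\circ(q^n\times G)$. Here $\theta\times G$ is an isomorphism of $B^n\times_{\delta^n}G$ onto $D\times_\eta G$ because functors carry isomorphisms to isomorphisms and $\theta$ is an isomorphism in $\cscoactndn(G)$; and $q^n\times G$ is an isomorphism of $B\times_\delta G$ onto $B^n\times_{\delta^n}G$, because by Proposition~\ref{propnormal}(c) the triple $\bigl(B^n\times_{\delta^n}G,\,j_{B^n}\circ q^n,\,j_G\bigr)$ is a crossed product for $(B,\delta)$, so it differs from the standard crossed product $(B\times_\delta G,j_B,j_G)$ only by the canonical isomorphism of crossed products, and that canonical isomorphism is by construction $q^n\times G$. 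Composing, $\phi\times G$ is an isomorphism of $B\times_\delta G$ onto $D\times_\eta G$, as required.

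The argument is essentially formal once the earlier boilerplate is available. The step I expect to need the most care is the passage from the categorical isomorphism $\theta$ to the honest surjectivity statement $\phi(B)=D$: one genuinely has to use both nondegeneracy relations, since possessing a two-sided inverse in $\cscoactndn(G)$ does not by itself place $\phi(B)$ inside $D$ rather than merely inside $M(D)$. The accompanying bookkeeping --- identifying $q^n\times G$ with the canonical isomorphism between the two crossed-product structures $\bigl(B^n\times_{\delta^n}G,\,j_{B^n}\circ q^n,\,j_G\bigr)$ and $(B\times_\delta G,j_B,j_G)$ of $(B,\delta)$ --- is routine given the uniqueness of crossed products recalled in the section on crossed products.
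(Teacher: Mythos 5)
Your argument is correct, and its first half (playing the universal property of Lemma~\ref{uniquepropnormal} against the hypothesis to get mutually inverse morphisms $\theta,\theta'$, then upgrading the categorical isomorphism to an honest surjection so that $\phi(B)=\theta(q^n(B))=D$) is essentially the paper's argument, which simply cites \cite{aHRWsurvey} for the fact that isomorphisms in $\cscatnd$ are $C^*$-isomorphisms. Where you genuinely diverge is in the second half. The paper proves surjectivity of $\phi\times G$ directly from $\phi(B)=D$ and then proves injectivity representation-theoretically: given a covariant pair $(\pi,\mu)$ of $(B,\delta)$, it invokes \cite[Proposition~2.3]{Q2} to see that $(\pi(B),\delta^\mu)$ is normal, uses the initial-object hypothesis once more to factor $\pi=\tau\circ\phi$, checks that $(\tau,\mu)$ is covariant for $(D,\eta)$, and concludes $\pi\times\mu=(\tau\times\mu)\circ(\phi\times G)$. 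You instead factor $\phi\times G=\overline{\theta\times G}\circ(q^n\times G)$ by functoriality, note that $\theta\times G$ is an isomorphism because functors preserve isomorphisms, and argue that $q^n\times G$ coincides with the canonical isomorphism between the two crossed-product structures $(B\times_\delta G,j_B,j_G)$ and $(B^n\times_{\delta^n}G,j_{B^n}\circ q^n,j_G)$ supplied by Proposition~\ref{propnormal}(c) and the uniqueness of crossed products. This is valid and not circular: in the paper the statement that $q^n\times G$ is an isomorphism appears only afterwards, as Corollary~\ref{natisonor} deduced from the proposition, but your derivation of it rests on Proposition~\ref{propnormal}(c) (ultimately \cite[Proposition~2.6]{Q2}) rather than on the proposition being proved, and the identification of $q^n\times G$ with the integrated form $(j_{B^n}\circ q^n)\times j_G$ is exactly the one the paper itself records in Lemma~\ref{lemG/H}. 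What each route buys: yours is more formal and shorter, essentially proving Corollary~\ref{natisonor} en route and then transporting it along the isomorphism $\theta$; the paper's route avoids passing through the normalisation in the injectivity step, working directly with arbitrary covariant representations of $(B,\delta)$ and the initial-object property, at the cost of invoking Quigg's normality result for $(\pi(B),\delta^\mu)$.
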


\begin{proof}
We know from Lemma~\ref{uniquepropnormal} that there is a morphism $\theta:(B^n,\delta^n)\to (D,\eta)$ such that $\theta\circ q^n=\phi$, and by hypothesis that there is a unique morphism $\tau:(D,\eta)\to (B^n,\delta^n)$ such that $\tau\circ \phi=q^n$. Then the uniqueness in Lemma~\ref{uniquepropnormal} implies that $\tau\circ\theta$ is the identity on $B^n$, and the uniqueness in the hypothesis that $\theta\circ \tau$ is the identity on $D$. Since the isomorphisms in $\cscatnd$ are the isomorphisms of $C^*$-algebras (see Proposition~1 of \cite{aHRWsurvey}, for example), we deduce that $\theta$ is an isomorphism, and is in particular surjective. Now the equation $\theta\circ q^n=\phi$ and the surjectivity of $q^n$ imply that $\phi(B)=D$.

Since $\phi(B)=D$, every generator of $D\times_\eta G$ is in the range of $\phi\times G$, and $\phi\times G$ is surjective. To see that $\phi\times G$ is injective, we let $(\pi,\mu)$ be a covariant representation of $(B,\delta)$, and prove that there is a representation $\rho$ of $D\times_\eta G$ such that $\pi\times\mu=\rho\circ(\phi\times G)$.

Recall from \cite[Proposition~2.3]{Q2} that the system $(\pi(B),\delta^\mu)$ (in which $\delta^\mu$ is conjugation by $\overline{\mu\otimes{\id}}(w_G)$) is normal. The covariance of $(\pi,\mu)$ says that $\pi:B\to \pi(B)$ is $\delta$--$\delta^\mu$ equivariant, and hence is a morphism from $(B,\delta)$ to $(\pi(B),\delta^\mu)$. Since $((D,\eta),\phi)$ is an initial object, there is a morphism $\tau:(D,\eta)\to (\pi(B),\delta^\mu)$ such that $\pi=\tau\circ \phi$. Then $(\tau,\mu)$ is a covariant representation of $(D,\eta)$:
\begin{align*}
\overline{\tau\otimes{\id}}\circ\eta(\phi(b))
&=\overline{\tau\otimes{\id}}\circ\overline{\phi\otimes{\id}}\circ\delta(b)\\
&=\overline{(\tau\circ \phi)\otimes{\id}}\circ\delta(b)\quad\text{(using \eqref{N1}\,)}\\
&=\overline{\pi\otimes{\id}}\circ\delta(b)\\
&=\Ad\mu\otimes{\id}(w_G)(\pi(b)\otimes 1)\\
&=\Ad\mu\otimes{\id}(w_G)(\tau(\phi(b))\otimes 1).
\end{align*}
Now $\rho:=\tau\times\mu$ satisfies
\[
\overline{\rho\circ(\phi\times G)}\circ j_B=\overline{\tau\times\mu}\circ\overline{\phi\times G}\circ j_B=\overline{\tau\times\mu}\circ j_D\circ \phi=\tau\circ \phi=\pi,
\]
and (similarly) $\overline{\rho\circ(\phi\times G)}\circ j_G=\mu$, so $\rho$ has the required property.
\end{proof}

\begin{cor}\label{natisonor}
For every full coaction $(B,\delta)$, the map $q^n\times G$ is an isomorphism of $B\times_\delta G$ onto $B^n\times_{\delta^n}G$.
\end{cor}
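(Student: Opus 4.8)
The plan is to derive Corollary~\ref{natisonor} as an immediate special case of Proposition~\ref{norvsKQnor}. By Lemma~\ref{uniquepropnormal}, the quotient map $q^n:(B,\delta)\to(B^n,\delta^n)$ is a morphism in $\cscoactnd(G)$ onto a normal coaction, and it has exactly the universal property hypothesised in Proposition~\ref{norvsKQnor}: for every morphism $\psi:(B,\delta)\to(C,\epsilon)$ to a normal coaction there is a unique morphism $\theta:(B^n,\delta^n)\to(C,\epsilon)$ with $\psi=\theta\circ q^n$. Thus Proposition~\ref{norvsKQnor} applies with $(D,\eta)=(B^n,\delta^n)$ and $\phi=q^n$, and its conclusion is precisely that $q^n\times G$ is an isomorphism of $B\times_\delta G$ onto $B^n\times_{\delta^n}G$.

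\begin{proof}[Proof of Corollary~\ref{natisonor}]
By Lemma~\ref{uniquepropnormal}, $q^n:(B,\delta)\to(B^n,\delta^n)$ is a morphism in $\cscoactnd(G)$ to a normal coaction, and for every morphism $\psi:(B,\delta)\to(C,\epsilon)$ to a normal coaction there is a unique morphism $\theta:(B^n,\delta^n)\to(C,\epsilon)$ with $\psi=\theta\circ q^n$. Thus $(B^n,\delta^n)$ and $q^n$ satisfy the hypotheses of Proposition~\ref{norvsKQnor} (with $(D,\eta)=(B^n,\delta^n)$ and $\phi=q^n$), and that proposition says that $q^n\times G$ is an isomorphism of $B\times_\delta G$ onto $B^n\times_{\delta^n}G$.
\end{proof}

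There is essentially no obstacle here: the content has already been extracted into Lemma~\ref{uniquepropnormal} and Proposition~\ref{norvsKQnor}, and the corollary is just the observation that the normalisation $q^n$ constructed in Proposition~\ref{propnormal} is an instance of the abstract initial object appearing in Proposition~\ref{norvsKQnor}. The only thing worth double-checking is that the map called $\phi\times G$ in Proposition~\ref{norvsKQnor} (defined via Proposition~\ref{defCPfunctor}) is literally the map $q^n\times G$ in the statement of the corollary, which it is, since both are the canonical homomorphism on crossed products induced by the coaction morphism $q^n$.
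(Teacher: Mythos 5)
Your proof is correct and is exactly the paper's argument: the paper also deduces Corollary~\ref{natisonor} by noting that Lemma~\ref{uniquepropnormal} gives $q^n$ the universal property required to apply Proposition~\ref{norvsKQnor}. Your extra remark identifying $\phi\times G$ with $q^n\times G$ is a fine sanity check but adds nothing beyond the paper's one-line proof.
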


\begin{proof}
Lemma~\ref{uniquepropnormal} says that we can apply Proposition~\ref{norvsKQnor} to $q^n$.
\end{proof}

\section{Applications.}\label{secappl}

\subsection{Crossed-product functors}\label{secCP}
We now want to define crossed-product functors on our various categories of coactions. With our conventions, there is an obvious way to define $\CP$ on objects: we send a coaction $(B,\delta)$ to the standard crossed product $B\times_\delta G\subset M(B\otimes\K(L^2(G)))$. So the problem is to decide what to do with morphisms. The next result tells us how to do this for full coactions.

\begin{prop}\label{defCPfunctor}
Suppose that $\phi:(B,\delta)\to (C,\epsilon)$ is a morphism in $\cscoactnd(G)$. Then there is a unique nondegenerate homomorphism $\phi\times G:B\times_\delta G\to M(C\times_\epsilon G)$ such that $\overline{\phi\times G}\circ j_B=\new{\overline{j_C}}\circ\phi$ and $\overline{\phi\times G}\circ j_G^B= j_G^C$. The assignments $(B,\delta)\mapsto B\times_\delta G$ and $\phi\mapsto \phi\times G$ define a functor $\CP:\cscoactnd(G)\to \cscatnd$.
\end{prop}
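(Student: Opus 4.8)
\textbf{Proof proposal for Proposition~\ref{defCPfunctor}.}

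The plan is to obtain $\phi\times G$ as the integrated form of a covariant representation of $(B,\delta)$ built from $\phi$ and the canonical generators of the standard crossed product $C\times_\epsilon G$, and then to extract functoriality purely from the uniqueness clause in the universal property of the standard crossed product.

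First I would show that the pair $\bigl(\overline{j_C}\circ\phi,\,j_G^C\bigr)$ is a covariant representation of $(B,\delta)$ in $M(C\times_\epsilon G)$. Both homomorphisms are nondegenerate --- $\phi$ by hypothesis, and $j_C$, $j_G^C$ because $(j_C,j_G^C)$ is a covariant representation of $(C,\epsilon)$ --- so a composite such as $\overline{j_C}\circ\phi$ is again nondegenerate, and it only remains to verify the covariance identity. This is exactly the covariance computation carried out inside the proof of Lemma~\ref{uniquepropnormal} (where normality of the target plays no role in that sub-step), with $\psi$ there replaced by $\phi$: one extends the covariance relation for $(j_C,j_G^C)$ from $C$ to $M(C)$ by strict continuity and then uses the equivariance $\overline{\phi\otimes\id}\circ\delta=\overline\epsilon\circ\phi$ together with the bar-composition identities of Appendix~\ref{secbarring}, notably \eqref{N1}, to obtain
\begin{align*}
\overline{(\overline{j_C}\circ\phi)\otimes\id}\circ\delta(b)
&=\overline{j_C\otimes\id}\circ\overline{\phi\otimes\id}\circ\delta(b)
=\overline{j_C\otimes\id}\circ\overline\epsilon\circ\phi(b)\\
&=\Ad\overline{j_G^C\otimes\id}(w_G)\bigl(\overline{j_C}(\phi(b))\otimes 1\bigr).
\end{align*}
Since $(B\times_\delta G,j_B,j_G^B)$ is a crossed product for $(B,\delta)$, its universal property now supplies a unique nondegenerate homomorphism --- which I name $\phi\times G$ --- satisfying $\overline{\phi\times G}\circ j_B=\overline{j_C}\circ\phi$ and $\overline{\phi\times G}\circ j_G^B=j_G^C$, and these two equations determine it; that is the uniqueness asserted in the Proposition.

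For functoriality I would argue entirely by this uniqueness. The identity endomorphism $\id_{B\times_\delta G}$ satisfies the two defining equations for $\id_B\times G$ (because $\overline{j_B}\circ\id_B=j_B$), hence $\id_B\times G=\id_{B\times_\delta G}$. Given composable morphisms $\phi\colon(B,\delta)\to(C,\epsilon)$ and $\psi\colon(C,\epsilon)\to(D,\zeta)$, set $\Phi:=\overline{\psi\times G}\circ(\phi\times G)$, a nondegenerate homomorphism $B\times_\delta G\to M(D\times_\zeta G)$ as a composite of nondegenerate ones. Applying $\overline{f\circ g}=\overline f\circ\overline g$ repeatedly gives
\[
\overline\Phi\circ j_B=\overline{\psi\times G}\circ\overline{j_C}\circ\phi=\overline{j_D\circ\psi}\circ\phi=\overline{j_D}\circ(\psi\circ\phi)
\qquad\text{and}\qquad
\overline\Phi\circ j_G^B=\overline{\psi\times G}\circ j_G^C=j_G^D,
\]
where $\psi\circ\phi$ denotes composition in $\cscoactnd(G)$, i.e.\ $\overline\psi\circ\phi$. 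By the uniqueness above, $\Phi=(\psi\circ\phi)\times G$, so $\CP$ is a functor.

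I do not expect a genuine obstacle here: everything reduces to the universal property of the standard crossed product together with the covariance computation already present in Lemma~\ref{uniquepropnormal}. The only thing demanding care is the bookkeeping with barred extensions --- tracking which homomorphism is extended to which multiplier algebra, and checking at each stage that nondegeneracy persists so that the extensions, and the bar-identities of Appendix~\ref{secbarring}, genuinely apply.
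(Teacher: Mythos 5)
Your argument is correct and follows essentially the same route as the paper: the paper defines $\phi\times G$ as the integrated form of the covariant pair $(\overline{j_C}\circ\phi,\,j_G^C)$, citing \cite[Lemma~A.46]{enchilada} for existence, and proves functoriality by exactly your uniqueness computation (uniqueness coming from the density of $\lsp\{j_B(b)j_G^B(f)\}$ in $B\times_\delta G$). The only difference is that you verify the covariance of $(\overline{j_C}\circ\phi,\,j_G^C)$ directly, by the strict-continuity extension argument also used in Lemma~\ref{uniquepropnormal}, rather than quoting the cited lemma, and that verification is carried out correctly.
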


\begin{proof}
The existence of $\phi\times G:=(\overline\phi\circ j_B)\times j_G^C$ is proved in \cite[Lemma~A.46]{enchilada}, and it is unique because the elements $j_B(b)j_G(f)$ span a dense subspace of $B\times_\delta G$.

Suppose that $\phi:(B,\delta)\to (C,\epsilon)$ and $\psi:(C,\epsilon)\to (D,\eta)$ are morphisms in $\cscoactnd(G)$. Then the composition in \new{$\cscatnd$} of $\psi\times G$ and $\phi\times G$ is the nondegenerate homomorphism $\overline{\psi\times G}\circ(\phi\times G)$, which satisfies
\begin{align*}
\new{\overline{\overline{\psi\times G}\circ(\phi\times G)}}\circ j_B
&=\overline{\psi\times G}\circ\overline{\phi\times G}\circ j_B
=\overline{\psi\times G}\circ\new{\overline{j_C}}\circ\phi\\
&=\overline{\overline{\psi\times G}\circ j_C}\circ\phi
=\overline{\new{\overline{j_D}}\circ\psi}\circ \phi=\new{\overline{j_D}}\circ(\overline\psi\circ\phi),
\end{align*}
and similarly $\new{\overline{\overline{\psi\times G}\circ(\phi\times G)}}\circ j_G^B=j_G^D$. Thus $\overline{\psi\times G}\circ(\phi\times G)$ has the property \new{that} characterises $(\overline\psi\circ\phi)\times G$, and uniqueness gives
\[
\overline{\psi\times G}\circ(\phi\times G)=(\overline\psi\circ\phi)\times G.
\]
The other necessary properties of composition follow from their counterparts in $\cscatnd$.  So $\CP:\cscoactnd(G)\to \cscatnd$ is a functor.
\end{proof}

We define $\CPn$ to be the restriction of $\CP$ to the subcategory $\cscoactndn(G)$. We can then use our boilerplate and $\CP^n$ to get a crossed-product functor on the category $\cscoactndr(G)$ of reduced coactions defined preceding Theorem~\ref{bashatbplate}. (The existence of this functor was asserted without proof in Theorem~4.2 and in the proof of \new{Proposition}~6.1 in \cite{kqrproper}.)

\begin{cor}
Suppose that $\phi:(B,\delta)\to (C,\epsilon)$ is a morphism in $\cscoactndr(G)$. Then there is a unique nondegenerate homomorphism $\phi\times G:B\times_\delta G\to M(C\times_\epsilon G)$ such that $\overline{\phi\times G}\circ j_B=\new{\overline{j_C}}\circ\phi$ and $\overline{\phi\times G}\circ j_G^B= j_G^C$. The assignments $(B,\delta)\mapsto B\times_\delta G$ and $\phi\mapsto \phi\times G$ define a functor $\CP^r:\cscoactndr(G)\to \cscatnd$.
\end{cor}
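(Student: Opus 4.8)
The plan is to build $\CPr$ by composing two functors already in hand: put $\CPr:=\CPn\circ\Red^{-1}$, where $\Red^{-1}\colon\cscoactndr(G)\to\cscoactndn(G)$ is the inverse of the isomorphism of Theorem~\ref{bashatbplate}(a) and $\CPn\colon\cscoactndn(G)\to\cscatnd$ is the restriction of the functor of Proposition~\ref{defCPfunctor} to normal coactions. Being a composite of functors, $\CPr$ is automatically a functor, so all that remains is to identify its object and morphism maps with the ones in the statement.

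On objects, $\Red^{-1}(B,\delta)=(B,\quigg)$, and Proposition~\ref{propred}(c), applied to the normal coaction $\quigg$ (whose reduction is $\delta$ by Proposition~\ref{propred}(a)), says that $B\times_{\quigg}G$ is the very same subalgebra of $M(B\otimes\K(L^2(G)))$ as $B\times_\delta G$. What one must actually check here is slightly more: that the canonical covariant pair $(\overline{\id\otimes\pi_\lambda}\circ\quigg,\,1\otimes M)$ attached to $\quigg$ coincides with the pair $(\overline{\id\otimes\pi_\lambda^r}\circ\delta,\,1\otimes M)$ attached to $\delta$. This is immediate from the identity $\overline{\id\otimes\pi_\lambda^r}\circ(\quigg)^r=\overline{\id\otimes\pi_\lambda}\circ\quigg$ proved (using \eqref{barcomp}) at the end of the proof of Proposition~\ref{propred}(c), together with $(\quigg)^r=\delta$. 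Hence $\CPr(B,\delta)=B\times_\delta G$ with exactly the maps $j_B$ and $j_G$ of the standard crossed product of the reduced coaction. On morphisms, $\Red^{-1}(\phi)=\phi$ by Theorem~\ref{bashatbplate}(a), while by Proposition~\ref{defCPfunctor}, $\CPn(\phi)=\phi\times G$ is the unique nondegenerate homomorphism $B\times_{\quigg}G\to M(C\times_{\epsilon^Q}G)$ with $\overline{\phi\times G}\circ j_B=\overline{j_C}\circ\phi$ and $\overline{\phi\times G}\circ j_G^B=j_G^C$. Since the canonical pairs for $\quigg$ and $\epsilon^Q$ are the ones for $\delta$ and $\epsilon$ by the identification just made, these are precisely the relations in the statement, and uniqueness holds because the elements $j_B(b)j_G(f)$ span a dense subspace of $B\times_\delta G$.

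I expect the only real obstacle to be precisely the bookkeeping in the previous paragraph: making sure that ``the same crossed product'' is upgraded to the statement that the canonical covariant pairs, not just the underlying $C^*$-algebras, coincide. Once that is in place, functoriality and the composition law $\overline{\psi\times G}\circ(\phi\times G)=(\overline\psi\circ\phi)\times G$ are inherited verbatim from $\CPn$ and $\Red^{-1}$, and nothing further needs to be computed. As an alternative, one could bypass Theorem~\ref{bashatbplate} and repeat the proof of Proposition~\ref{defCPfunctor} directly, replacing its use of covariance of $(B,\delta)$ by Proposition~\ref{propred}(b) to move covariant representations between the reduced coaction and its Quiggification and then quoting \cite[Lemma~A.46]{enchilada}; but the route through $\Red^{-1}$ is shorter and makes the functorial properties transparent.
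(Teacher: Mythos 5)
Your proof is correct and takes exactly the paper's route: the paper's entire proof is the one line ``the functor $\CP^r:=\CP^n\circ\Red^{-1}$ has the required properties,'' and your careful identification of the canonical covariant pairs via Proposition~\ref{propred}(c) simply spells out the bookkeeping the paper leaves implicit.
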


\begin{proof}
The functor $\CP^r:=\CP^n\circ\Red^{-1}$ has the required properties.
\end{proof}

The way we have defined $\CP^r$ it trivially satisfies $\CP^n=\CP^r\circ\Red$. The analogous statement for normalisations is a little more subtle.

\begin{prop}
The maps
\[\{q^n\times G:B\times_\delta G\to B^n\times_{\delta^n} G:(B,\delta)\in \cscoactnd(G)\}\]
implement a natural isomorphism between $\CP$ and $\CP^n\circ \Nor$.
\end{prop}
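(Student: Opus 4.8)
The plan is to verify the two conditions in the definition of a natural isomorphism between the functors $\CP$ and $\CP^n\circ\Nor$ from $\cscoactnd(G)$ to $\cscatnd$: first, that each component $q^n\times G:B\times_\delta G\to B^n\times_{\delta^n}G$ is an isomorphism in $\cscatnd$, and second, that these components are natural in $(B,\delta)$, i.e. that the obvious square commutes for every morphism $\phi:(B,\delta)\to (C,\epsilon)$.

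First I would dispose of the isomorphism part: this is exactly Corollary~\ref{natisonor}, which states that $q^n\times G$ is an isomorphism of $B\times_\delta G$ onto $B^n\times_{\delta^n}G$. (One should note that $q^n\times G$ here is the morphism produced by the functor $\CP$ applied to the morphism $q^n:(B,\delta)\to(B^n,\delta^n)$ of Lemma~\ref{uniquepropnormal}, and that $(\CP^n\circ\Nor)(B,\delta)=\CP^n(B^n,\delta^n)=B^n\times_{\delta^n}G$ since $\delta^n$ is normal, so the source and target are indeed the right objects.)

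Next I would check naturality. Let $\phi:(B,\delta)\to (C,\epsilon)$ be a morphism in $\cscoactnd(G)$. We must show
\[
(q^n_C\times G)\circ(\phi\times G)=\overline{(\phi^n\times G)}\circ(q^n_B\times G)
\]
as morphisms $B\times_\delta G\to M(C^n\times_{\epsilon^n}G)$, where $\phi^n:(B^n,\delta^n)\to(C^n,\epsilon^n)$ is the morphism produced by $\Nor$ and $\phi^n\times G$ its image under $\CP^n$. Both sides are nondegenerate homomorphisms out of $B\times_\delta G$, whose generators are the elements $j_B(b)j_G^B(f)$, so it suffices to check that the two sides agree after composing with $j_B$ and with $j_G^B$. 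Using the characterizing equations $\overline{\phi\times G}\circ j_B=\overline{j_C}\circ\phi$ and $\overline{\phi\times G}\circ j_G^B=j_G^C$ from Proposition~\ref{defCPfunctor}, together with the functoriality identity $\overline{\psi\times G}\circ(\phi\times G)=(\overline\psi\circ\phi)\times G$ proved there and the defining relation $\phi^n\circ q^n=\overline{q^n}\circ\phi$ from Theorem~\ref{bplatenor}(a), this reduces to showing that the morphisms $q^n_C\circ\phi$ and $\overline{\phi^n}\circ q^n_B$ of $\cscoactnd(G)$ coincide — which is precisely that same defining relation $\phi^n\circ q^n_B=\overline{q^n_C}\circ\phi$ (read via the convention that composition in these categories is $\overline{(\cdot)}\circ(\cdot)$). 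So both composites equal $(q^n_C\circ\phi)\times G=(\overline{\phi^n}\circ q^n_B)\times G$, and on $j_G^B$ both sides give $j_G^{C^n}$ directly.

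I expect the only real obstacle to be bookkeeping: keeping straight which $q^n$ (for $B$ versus for $C$) and which bar-extensions are in play, and confirming that the constituent maps land in the algebras one expects so that the characterizing equations of Proposition~\ref{defCPfunctor} apply verbatim. Once one is careful that $\CP$ and $\CP^n$ produce morphisms characterized by their action on the generators $j_B,j_G$, the argument is a routine diagram chase with no new input beyond Corollary~\ref{natisonor}, Proposition~\ref{defCPfunctor}, and Theorem~\ref{bplatenor}(a). A short remark could be added that naturality in the trivial sense also records $\CP=\CP^n\circ\Nor$ up to this canonical isomorphism, completing the sense in which "normalising does not affect the crossed product."
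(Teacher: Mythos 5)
Your proposal is correct and follows essentially the same route as the paper: the isomorphism part is exactly Corollary~\ref{natisonor}, and naturality is verified by checking the square on the generators $j_B(b)j_G^B(f)$ using the characterising equations of Proposition~\ref{defCPfunctor} together with the relation $\phi^n\circ q^n=\overline{q^n}\circ\phi$ from Theorem~\ref{bplatenor}(a). Your invocation of the functoriality identity $\overline{\psi\times G}\circ(\phi\times G)=(\overline\psi\circ\phi)\times G$ is only a mild repackaging of the paper's direct generator computation, so no new ideas or gaps arise.
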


\begin{proof}
We know from Corollary~\ref{natisonor} that $q^n\times G$ is an isomorphism. To see that the isomorphism is natural we need to check that if $\phi$ is a morphism then
\[
\overline{q^n\times G}\circ (\phi\times G)=\overline{\phi^n\times G}\circ (q^n\times G).
\]
This is straightforward: for example,
\begin{align*}
\overline{\overline{q^n\times G}\circ (\phi\times G)}\circ j_B
&=\new{\overline{j_{C^n}}}\circ(q^n\circ\phi)=\new{\overline{j_{C^n}}}\circ(\phi^n\circ q^n)\\
&=\overline{\overline{\phi^n\times G}\circ (q^n\times G)}\circ j_B,
\end{align*}
and we similarly have
\[
\overline{\overline{q^n\times G}\circ (\phi\times G)}\circ j_G^B=\overline{\overline{\phi^n\times G}\circ (q^n\times G)}\circ j_G^B.\qedhere
\]
\end{proof}

\subsection{The dual-invariant uniqueness theorem.}\label{secdiut}

\begin{prop}\label{giut}
Suppose that $(B,\delta)$ is a full coaction and $(\pi,\mu)$ is a covariant representation of $(B,\delta)$ in $M(C)$ such that $\ker\pi$ coincides with the kernel of the canonical map $j_B$ of $B$ into $M(B\times_\delta G)$. Suppose that there is an action $\beta:G\to \Aut C$ such that \begin{equation*}
\new{\overline{\beta_s}}(\pi(b))=\pi(b)\ \text{for $b\in B$}\ \text{ and }\ \new{\overline{\beta_s}}(\mu(f))=\mu(\rt_s(f))\ \text{for $f\in C_0(G)$.}
\end{equation*}
Then $\pi\times\mu:B\times_\delta G\to M(C)$ is injective.
\end{prop}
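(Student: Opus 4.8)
The plan is to deduce this from the uniqueness theorem for crossed products by \emph{reduced} coactions in \cite{rae}, transporting the hypotheses through the normalisation boilerplate of \S\ref{secnormalising} and the reduction boilerplate of \S\ref{nor=red}.

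First, the hypothesis $\ker\pi=\ker j_B=\ker q^n$ lets us factor $\pi=\pi^n\circ q^n$ with $\pi^n:B^n\to M(C)$ \emph{injective}. By Proposition~\ref{propnormal}(b), $(\pi^n,\mu)$ is a covariant representation of the normalisation $(B^n,\delta^n)$, and I claim the integrated form $\pi^n\times\mu$ is literally the map $\pi\times\mu$ we started with: by Proposition~\ref{propnormal}(c) the triple $(B\times_\delta G,j_B^n,j_G)$ is a crossed product for $(B^n,\delta^n)$, so $\pi^n\times\mu$ is the unique nondegenerate homomorphism out of $B\times_\delta G$ with $\overline{\pi^n\times\mu}\circ j_B^n=\pi^n$ and $\overline{\pi^n\times\mu}\circ j_G=\mu$; since $j_B=j_B^n\circ q^n$ and $\pi=\pi^n\circ q^n$ this map also satisfies $\overline{\pi^n\times\mu}\circ j_B=\pi$, so by uniqueness of integrated forms for $(B,\delta)$ it equals $\pi\times\mu$. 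Hence it suffices to show $\pi^n\times\mu$ is injective.

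Since $(B^n,\delta^n)$ is normal, I would now run it through the reduction boilerplate. By Proposition~\ref{propred}(b), $(\pi^n,\mu)$ is a covariant representation of the reduced coaction $(B^n,(\delta^n)^r)$; by Proposition~\ref{propred}(c) the triple $(B\times_\delta G,j_B^n,j_G)$ is a crossed product for $(B^n,(\delta^n)^r)$ as well, and its integrated form is again $\pi^n\times\mu$. The given action $\beta$ still satisfies $\overline{\beta_s}\circ\pi^n=\pi^n$ (since $\pi^n(B^n)=\pi(B)$) and $\overline{\beta_s}\circ\mu=\mu\circ\rt_s$, so $\beta$, possibly after restriction to the range of $\pi^n\times\mu$, is a dual action for $(\pi^n,\mu)$ of the sort required. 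Since $\pi^n$ is faithful, the reduced-coaction version of this proposition, which is the uniqueness theorem of \cite{rae}, now applies and shows that $\pi^n\times\mu$, and hence $\pi\times\mu$, is injective.

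The argument is essentially bookkeeping with the universal properties of crossed products and the extension-to-multipliers formalism of Appendix~\ref{secbarring}; the only point that really needs attention is matching the hypotheses of \cite{rae} precisely --- identifying the crossed product used there with ours so that its uniqueness theorem transfers, and, if that theorem is stated for representations on a Hilbert space, carrying out the routine passage to a faithful representation of $C$ on which $\beta$ is realised. That, rather than any conceptual difficulty, is where the work lies.
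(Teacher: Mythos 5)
Your argument is essentially the paper's own proof: factor $\pi$ through the normalisation $(B^n,\delta^n)$ via Proposition~\ref{propnormal}, pass to the reduction via Proposition~\ref{propred}, and invoke the reduced-coaction uniqueness theorem of \cite{rae}, matching the crossed products via the boilerplates. The one step you defer as routine --- upgrading \cite[Corollary~4.3]{rae} from Hilbert-space representations to covariant representations in $M(C)$ by composing with a faithful covariant representation $(\rho,U)$ of $(C,\beta)$ --- is exactly what the paper isolates as Lemma~\ref{giutred}, and your description of how to do it is correct.
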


Proposition~\ref{giut} differs from Corollary~4.3 of \cite{rae} in two respects: it is about full coactions rather than reduced ones, and it is about representations with values in $C^*$-algebras rather than representations on Hilbert space. The second change is intended to make the result look more like the familiar gauge-invariant uniqueness theorems, and we begin with a similar rephrasing of the result in \cite{rae}.

\begin{lemma}\label{giutred}
Suppose that $(B,\epsilon)$ is a reduced coaction and $(\pi,\mu)$ is a covariant representation of $(B,\epsilon)$ in $M(C)$ such that $\pi$ is injective. Suppose that there is an action $\beta:G\to \Aut C$ such that
\begin{equation}\label{gaugeinvariance}
\new{\overline{\beta_s}}(\pi(b))=\pi(b)\ \text{for $b\in B$}\ \text{ and }\ \new{\overline{\beta_s}}(\mu(f))=\mu(\rt_s(f))\ \text{for $f\in C_0(G)$.}
\end{equation}
Then $\pi\times\mu$ is injective.
\end{lemma}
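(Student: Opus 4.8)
\textbf{Proof proposal for Lemma~\ref{giutred}.}

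The plan is to reduce this to the known result, Corollary~4.3 of \cite{rae}, which is stated for covariant representations on Hilbert space. Since $(\pi,\mu)$ takes values in $M(C)$ rather than in $B(\HH)$, the first move is to produce a faithful nondegenerate representation of $C$ and push everything onto a Hilbert space. Concretely, I would choose a faithful nondegenerate representation $L$ of $C$ on a Hilbert space $\HH$; then $\overline L\circ\pi$ and $\overline L\circ\mu$ are a covariant representation of $(B,\epsilon)$ on $\HH$ (covariance is preserved because the extension $\overline L$ is a $*$-homomorphism that is strictly continuous on bounded sets, so it passes through the $\overline{\cdot\otimes\id}$'s and the $\Ad$; this uses the barring identities \eqref{N1} and \eqref{N2} from the appendix). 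Moreover $\overline L\circ\pi$ is injective because $L$ is faithful and $\pi$ is injective, and since $L(C)\ker(\pi\times\mu) = \cdots$ one checks that $L\circ(\pi\times\mu) = (\overline L\circ\pi)\times(\overline L\circ\mu)$, so the integrated forms match up and injectivity of the Hilbert-space integrated form will give injectivity of $\pi\times\mu$.

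The second ingredient is the gauge action. I would need an action on $B(\HH)$ (or rather, a suitable unitary implementation) making $\overline L\circ\pi$ and $\overline L\circ\mu$ satisfy the hypotheses of \cite[Corollary~4.3]{rae}. The natural thing is to transport $\beta$: since $L$ is nondegenerate, for each $s$ the automorphism $L\circ\beta_s\circ L^{-1}$ of $L(C)$ is spatially implemented on $\HH$ only if $L\circ\beta_s\circ L^{-1}$ is unitarily implemented, which need not hold for an arbitrary faithful $L$. The cleaner route is to apply \cite[Corollary~4.3]{rae} in the form that only requires the gauge \emph{action} $\beta$ on $C$ (not a spatial one) --- if \cite{rae}'s statement genuinely needs Hilbert space, then I would instead invoke it after replacing $\HH$ by $\HH\otimes L^2(G)$ and $L$ by a regular representation, where the dual action \emph{is} spatially implemented by the right-translation unitaries on $L^2(G)$; but the slick approach, and the one consistent with how the paper handles such issues elsewhere (cf.\ the proof of Theorem~\ref{bplatenor}(c)), is simply to observe that the statement of \cite[Corollary~4.3]{rae} can be read with $\beta$ an action on the $C^*$-algebra, so that no spatiality is needed and the transport is automatic.

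\textbf{Main obstacle.} The genuine content is not the rephrasing --- that is bookkeeping with the barring lemmas and nondegeneracy --- but making sure the gauge hypothesis survives the passage to $M(C)$-valued representations in a form that actually matches the hypotheses of \cite[Corollary~4.3]{rae}. The two equations in \eqref{gaugeinvariance} involve the extensions $\overline{\beta_s}$ acting on $\pi(b),\mu(f)\in M(C)$, so one must check that these extended relations descend correctly to the integrated form $\pi\times\mu$ and that the resulting gauge condition on $B\times_\epsilon G$ is exactly a dual-action invariance of the type \cite{rae} requires. Concretely, I expect to verify $\overline{\beta_s}\circ(\pi\times\mu) = (\pi\times\mu)\circ\widehat{\epsilon}_s$ on generators $j_B(b)j_G(f)$, using $\widehat\epsilon_s(j_B(b)j_G(f)) = j_B(b)j_G(\rt_s(f))$ and the two displayed relations; once this is in hand, Corollary~4.3 of \cite{rae} applies directly and gives injectivity of $\pi\times\mu$ on $B\times_\epsilon G$. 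Everything else is routine.
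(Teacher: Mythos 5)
The route you actually advocate --- reading \cite[Corollary~4.3]{rae} ``with $\beta$ an action on the $C^*$-algebra, so that no spatiality is needed'' --- is not available. That corollary is a Hilbert-space statement: it requires a unitary representation $U$ of $G$ with $U_s\pi(b)U_s^*=\pi(b)$ and $U_s\mu(f)U_s^*=\mu(\rt_s(f))$, and the paper points out immediately before Lemma~\ref{giutred} that the content of this lemma is precisely to replace Hilbert-space representations (with their unitary implementation) by $M(C)$-valued ones equipped only with an action $\beta$. So reading the corollary non-spatially amounts to assuming the lemma you are proving. Your closing paragraph has the same problem: verifying $\overline{\beta_s}\circ(\pi\times\mu)=(\pi\times\mu)\circ\hat\epsilon_s$ on generators is true but does not put you in a position to ``apply Corollary~4.3 directly'', because without a unitary implementing this invariance there is nothing in \cite{rae} to invoke. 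And, as you note yourself, starting from an arbitrary faithful representation $L$ of $C$ does not produce such a unitary implementation.

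The viable argument is the one you relegate to a fallback, and it is exactly the paper's proof: choose a \emph{covariant} representation $(\rho,U)$ of the system $(C,\beta)$ on a Hilbert space with $\rho$ faithful --- for instance the regular representation induced from a faithful representation of $C$, acting on $L^2(G,\HH)$, where $\beta$ is implemented by the translation unitaries. Then \eqref{gaugeinvariance} says precisely that each $U_s$ commutes with every $\overline\rho(\pi(b))$ and that $(\overline\rho\circ\mu,U)$ is covariant for $(C_0(G),\rt)$; since $\rho$ and $\pi$ are faithful, so is $\overline\rho\circ\pi$, and \cite[Corollary~4.3]{rae} gives that $(\overline\rho\circ\pi)\times(\overline\rho\circ\mu)$ is faithful. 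Finally $(\overline\rho\circ\pi)\times(\overline\rho\circ\mu)=\overline\rho\circ(\pi\times\mu)$ by uniqueness of integrated forms (the coaction analogue of Lemma~\ref{barintform}), whence $\pi\times\mu$ is injective. So the missing commitment in your write-up is the choice of a covariant faithful representation of $(C,\beta)$ as the main line of the argument; once that replaces the ``slick approach'', the rest of your bookkeeping (covariance transporting under $\overline\rho$, injectivity of the extension of a faithful nondegenerate representation) is fine.
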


\begin{proof}
Choose a covariant representation $(\rho,U)$ of $(C,\beta)$ on a Hilbert space $\HH$ such that $\rho$ is faithful. (For example, $(\rho,U)$ could be the regular representation induced from a faithful representation of $C$.) Then the conditions \eqref{gaugeinvariance} imply that each $U_s$ commutes with each $\overline\rho\circ\pi(b)$ and that $(\overline\rho\circ\mu,U)$ is a covariant representation of the system $(C_0(G),\rt)$, or equivalently that $(\overline\rho\circ(\pi\times\mu),U)$ is a covariant representation of $(B\times_\epsilon G,\hat\epsilon)$. Since $\rho$ and $\pi$ are faithful, so is $\overline\rho\circ\overline{\pi\times\mu}\circ j_B=\overline\rho\circ\pi$. Now \cite[Corollary~4.3]{rae} implies that $(\overline\rho\circ\pi)\times(\overline\rho\circ\mu)$ is faithful, and since $(\overline\rho\circ\pi)\times(\overline\rho\circ\mu)=\overline\rho\circ(\pi\times\mu)$ by Lemma~\ref{barintform}, \new{it follows that $\pi\times\mu$ is faithful}.
\end{proof}

\begin{proof}[Proof of Proposition~\ref{giut}]
Since $\ker\pi=\ker j_B$, $\pi$ factors through an injective homomorphism $\pi^n:B^n:=B/\ker j_B\to M(C)$, and Proposition~\ref{propnormal}(b) implies that $(\pi^n,\mu)$ is a covariant representation of $(B^n,\delta^n)$.  Proposition~\ref{propred}(b) implies that $(\pi^n,\mu)$ is also a covariant representation of the reduction $(B^{nr},\delta^{nr})$. The automorphisms $\new{\overline{\beta_s}}$ leave everything in $\pi^n(B^n)=\pi(B)$ fixed, and we still have $\new{\overline{\beta_s}}(\mu(f))=\mu(\rt_s(f))$. Since $\delta^n$ is normal, $\delta^{nr}$ is a reduced coaction on the same algebra $B^n$.  Thus Lemma~\ref{giutred} implies that $\pi^n\times\mu$ is injective on $B^n\times_{\delta^{nr}}G$. Proposition~\ref{propred}(c)  implies that $(B^n\times_{\delta^{nr}}G,j_{B^n},j_G)$ is a crossed product for $(B^n,\delta^n)$, and Proposition~\ref{propnormal}(c) that $(B^n\times_{\delta^{nr}}G,j_{B^n}\circ q^n,j_G)$ is a crossed product for $(B,\delta)$. Since
\[
\pi^n\times\mu(j_{B^n}\circ q^n(b)j_G(f))=\pi^n(q^n(b))\mu(f)=\pi(b)\mu(f)=\pi\times \mu(j_B(b)\mu(f)),
\]
the representation of $B\times_\delta G$ corresponding to the representation $\pi^n\times\mu$ of $B^n\times_{\delta^{nr}}G$ is $\pi\times\mu$. Thus $\pi\times\mu$ is injective, as required.
\end{proof}

From Proposition~\ref{giut} we can deduce the following converse of Proposition~\ref{propnormal}(d).

\begin{cor}\label{convlem9}
Suppose that $(B,\delta)$ is a full coaction and $N$ is a closed normal subgroup of $G$ such that $(\delta|)^n=(\delta^n)|$. Then $j_B\times {j_G}|$ is injective.
\end{cor}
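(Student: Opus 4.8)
The plan is to apply the dual-invariant uniqueness theorem, Proposition~\ref{giut}, to the full coaction $(B,\delta|)$ of the quotient group $G/N$. By Proposition~\ref{propnormal}(d), $(j_B,j_G|)$ is a covariant representation of $(B,\delta|)$, and the image of its integrated form $j_B\times j_G|$ is the $C^*$-subalgebra $D:=B\times_{\delta, r}(G/N)=\clsp\{j_B(b)\overline{j_G}|(f):b\in B,\ f\in C_0(G/N)\}$ of $M(B\times_\delta G)$ appearing in the proof of Proposition~\ref{propnormal}(d). So $(j_B,j_G|)$ may be regarded as a covariant representation of $(B,\delta|)$ in $M(D)$, and, since the kernel of a homomorphism does not depend on which multiplier algebra it is viewed as mapping into, it suffices to prove that $j_B\times j_G|:B\times_{\delta|}(G/N)\to M(D)$ is injective.

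There are two hypotheses of Proposition~\ref{giut} to verify. First, $\ker j_B$ must equal the kernel of the canonical map $j_B^{G/N}$ of $B$ into $M(B\times_{\delta|}(G/N))$. Covariance of $(j_B,j_G|)$ gives $j_B=\overline{j_B\times j_G|}\circ j_B^{G/N}$, whence $\ker j_B^{G/N}\subseteq\ker j_B$; the reverse inclusion is exactly what the hypothesis provides, since $(\delta|)^n$ is a coaction on $B/\ker j_B^{G/N}$ while $(\delta^n)|$ is a coaction on $B^n=B/\ker j_B$, so their equality forces the two kernels to agree. Second, we need an action $\beta$ of $G/N$ on $D$ with $\overline{\beta_{sN}}(j_B(b))=j_B(b)$ for $b\in B$ and $\overline{\beta_{sN}}(j_G|(f))=j_G|(\rt_{sN}(f))$ for $f\in C_0(G/N)$.

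The construction of $\beta$ is the point that needs care, and is the main obstacle: one is tempted to use the dual action $\hat\delta$ of $G$ on $B\times_\delta G$, but $\hat\delta$ does not descend to $G/N$ in general---what saves the day is that its restriction to the smaller algebra $D$ does. Concretely, the extension of $\hat\delta_s$ to the multiplier algebra satisfies $\overline{\hat\delta_s}(j_B(b))=j_B(b)$ and $\overline{\hat\delta_s}(\overline{j_G}(h))=\overline{j_G}(\rt_s(h))$ for $h\in C_b(G)$; since $\rt_s$ carries the subspace $C_0(G/N)\subseteq C_b(G)$ of functions constant on $N$-cosets into itself, acting there as $\rt_{sN}$, the algebra $D$ is $\hat\delta$-invariant, and for $s\in N$ we get $\overline{\hat\delta_s}(j_B(b)j_G|(f))=j_B(b)j_G|(\rt_{sN}(f))=j_B(b)j_G|(f)$, so that $\hat\delta_s|_D=\id_D$. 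Hence $\hat\delta|_D$ factors through $G\to G/N$, giving an action $\beta:G/N\to\Aut D$; extending $\beta_{sN}$ to $M(D)$ and using the surjectivity of $\beta_{sN}$ yields the two required identities. Proposition~\ref{giut}, applied with $G$, $(B,\delta)$, $(\pi,\mu)$, $C$, $\beta$ replaced respectively by $G/N$, $(B,\delta|)$, $(j_B,j_G|)$, $D$, $\beta$, now shows that $j_B\times j_G|:B\times_{\delta|}(G/N)\to M(D)$ is injective, and we are done. Apart from this, the argument is routine bookkeeping with integrated forms and with the meaning of the hypothesis $(\delta|)^n=(\delta^n)|$.
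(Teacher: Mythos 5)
Your proposal is correct and follows essentially the same route as the paper: both deduce $\ker j_B^{G/N}=\ker j_B$ from the hypothesis $(\delta|)^n=(\delta^n)|$ (since the two normalisations live on the corresponding quotients of $B$), both obtain the action of $G/N$ by restricting the extended dual automorphisms $\overline{\hat\delta_s}$ to the subalgebra $\clsp\{j_B(b)j_G|(f)\}$ and observing they are trivial on $N$, and both conclude by the dual-invariant uniqueness theorem applied to $(j_B,j_G|)$. The only differences are cosmetic: you invoke Proposition~\ref{giut} for the full coaction $(B,\delta|)$ of $G/N$ and make explicit the (harmless) step of viewing $(j_B,j_G|)$ as a representation in $M(D)$, whereas the paper routes the final step through the reduced-coaction lemma; you could add the paper's one-line remark that strong continuity of $\beta$ follows from strong continuity of right translation on $C_0(G/N)$.
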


\begin{proof}Let $(j_B^{G/N},j_{G/N})$ and $(j_B^G,j_G)$ denote the universal covariant representations in $M(B\times_{\delta|}(G/N))$ and $M(B\times_\delta G)$.
Observe straightaway that since $\delta|^n$ is a coaction on $B/\ker j_B^{G/N}$ and $\delta^n|$ is a coaction on $B/\ker j_B^G$, we must have $\ker j_B^G=\ker j_B^{G/N}$.

Next, we apply Lemma~\ref{covrestricts} to $(j_B^G,j_G)$, obtaining a covariant representation $(j_B^G,j_G|)$ of $(B,G/N,\delta|)$ in $M(B\times_\delta G)$. The integrated form $j_B^G\times j_G|$ has range
\[
C:=\clsp\{j_B^G(b)j_G|(f):b\in B,\ f\in C_0(G/N)\}.
\]
The dual automorphisms $\hat\delta_s$ extend to automorphisms $\new{\overline{\hat\delta_s}}$ of $M(B\times_\delta G)$, and these automorphisms map $C$ into $C$. Since the action of $G$ by right translation on $C_0(G/N)$ is strongly continuous, so is $s\mapsto \new{\overline{\hat\delta_s}}:G\to \Aut C$. For $n\in N$, $\new{\overline{\hat\delta_n}}$ is the identity on $C$, and thus there is an action $\beta:G/N\to \Aut C$ such that $\beta_{sN}=\new{\overline{\hat\delta_s}|_C}$. This action satisfies $\new{\overline{\beta_{sN}}}(j_B^G(b))=j_B^G(b)$ and
\[
\new{\overline{\beta_{sN}}}(j_G|(f))=\new{\overline{\hat\delta_s}}(j_G|(f))=\new{\overline{\hat\delta_s}}(\new{\overline{j_G}}(f))=\new{\overline{j_G}}(\rt_s(f))=j_G|(\rt_{sN}(f)).
\]
Since $\ker j_B^G=\ker j_B^{G/N}$, applying Lemma~\ref{giutred} to $(j_B^G,j_G|)$ shows that $j_B^G\times j_G|$ is injective.
\end{proof}

\subsection{The different versions of Landstad's theorem}\label{secland}

The statement in part (a) of the following Proposition is Landstad's criterion for identifying reduced crossed products from \cite{L}, and (d) is the one used in \cite{kqcat}.

\begin{prop}\label{Landequiv}
Suppose that $B$ is a $C^*$-algebra and $G$ is a locally compact group. Then the following statements are equivalent:
\begin{itemize}
\item[(a)] there is a reduced coaction $\delta$ of $G$ on $B$ and a strictly continuous homomorphism $u:G\to UM(B)$ such that $\overline\delta (u_s)=u_s\otimes k_G^r(s)$ for $s\in G$;

\smallskip
\item[(b)] there is an object $(B,\delta,\phi)$ in the comma category $(C_r^*(G),\delta_G^r)\downarrow \cscoactndr(G)$;

\smallskip
\item[(c)] there is an object $(B,\epsilon,\psi)$ in $(C_r^*(G),\delta_G^n)\downarrow \cscoactndn(G)$;

\smallskip
\item[(d)] there is an object $(B,\epsilon,\theta)$ in $(C^*(G),\delta_G)\downarrow \cscoactndn(G)$.
\end{itemize}
\end{prop}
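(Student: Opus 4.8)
The plan is to prove the equivalences by establishing a cycle $(a)\Leftrightarrow(b)$, $(b)\Leftrightarrow(c)$, and $(c)\Leftrightarrow(d)$, since each pair of consecutive statements is essentially an unpacking or a direct application of one of the boilerplates. First, $(a)\Leftrightarrow(b)$ should be a matter of definitions: a morphism $\phi:(C_r^*(G),\delta_G^r)\to(B,\delta)$ in $\cscoactndr(G)$ is by definition a nondegenerate homomorphism $\phi:C_r^*(G)\to M(B)$ with $\overline{\phi\otimes\id}\circ\delta_G^r=\overline\delta\circ\phi$. Such $\phi$ corresponds exactly to a strictly continuous unitary homomorphism $u=\overline\phi\circ k_G^r:G\to UM(B)$, and the equivariance condition translates under this correspondence into $\overline\delta(u_s)=u_s\otimes k_G^r(s)$; one checks that $\delta_G^r(k_G^r(s))=k_G^r(s)\otimes k_G^r(s)$ (the defining comultiplication identity for the reduced group algebra) makes these two conditions equivalent, and nondegeneracy of $\phi$ is automatic since $u$ is unitary-valued. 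I would write this out carefully since it is the only place where the "spatial" formulation (a) meets the categorical formulation, but it is routine.

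Next, $(b)\Leftrightarrow(c)$ is precisely an instance of the isomorphism of categories $\Red:\cscoactndn(G)\to\cscoactndr(G)$ from Theorem~\ref{bashatbplate}. An isomorphism of categories carries comma categories to comma categories: applying $\Red^{-1}$ to an object $(B,\delta,\phi)$ of $(C_r^*(G),\delta_G^r)\downarrow\cscoactndr(G)$ produces an object of $(\Red^{-1}(C_r^*(G),\delta_G^r))\downarrow\cscoactndn(G)$, and $\Red^{-1}(C_r^*(G),\delta_G^r)$ is the Quiggification of $\delta_G^r$. Here I must identify this Quiggification with $(C_r^*(G),\delta_G^n)$; this follows because $\delta_G^n$ is by definition the (full dual) normal coaction on $C_r^*(G)=\C\times_{\id,r}G$ whose reduction is $\delta_G^r$ (this is exactly the content of the notation introduced in Section~2, together with Proposition~\ref{propred}(a) which says $\Red^{-1}$ is characterized by having the given reduction), so $\Red^{-1}(C_r^*(G),\delta_G^r)=(C_r^*(G),\delta_G^n)$ by uniqueness of $\Red^{-1}$. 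Since $\Red^{-1}$ is the identity on morphisms, $\psi:=\phi$ works, and the converse direction uses $\Red$ the same way.

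Finally, $(c)\Leftrightarrow(d)$ is an instance of the normalisation boilerplate, specifically Theorem~\ref{bplatenor}(b): when the target of a comma category lies in $\cscoactndn(G)$, the map $\phi\mapsto\phi^n$ gives a natural bijection $\Mor((B,\delta),(C,\epsilon))\cong\Mor((B^n,\delta^n),(C,\epsilon))$ for $(C,\epsilon)$ normal, so objects of $(C^*(G),\delta_G)\downarrow\cscoactndn(G)$ correspond bijectively to objects of $(C^*(G)^n,\delta_G^n)\downarrow\cscoactndn(G)$. The point to nail down is that the normalisation of $(C^*(G),\delta_G)$ is $(C_r^*(G),\delta_G^n)$. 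This is the instance $A=\C$, $\alpha=\id$ of Theorem~\ref{bplatenor}(c): the full dual system $(A\times_\alpha G,\hat\alpha)=(C^*(G),\delta_G)$ has normalisation the full dual system on the reduced crossed product, namely $(C_r^*(G),\hat{\id}^n)=(C_r^*(G),\delta_G^n)$. Then $\theta\mapsto\theta^n$ (with $(C^*(G))^n=C_r^*(G)$) sends an object of (d) to one of (c), and conversely precomposing with the quotient map $C^*(G)\to C_r^*(G)$ (viewed as the normalising morphism $q^n$) recovers (d) from (c).

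The main obstacle I anticipate is not any single hard estimate but rather the bookkeeping of identifications: showing that $\Red^{-1}(C_r^*(G),\delta_G^r)=(C_r^*(G),\delta_G^n)$ and that $\Nor(C^*(G),\delta_G)=(C_r^*(G),\delta_G^n)$ — i.e. that the two "canonical" normal coactions on $C_r^*(G)$ appearing via reduction-of-full-dual and via normalisation-of-$\delta_G$ are literally the same coaction $\delta_G^n$ defined in Section~2. Both reduce to recognizing $\delta_G^n=\hat{\id}^n$ and $\delta_G^r=\hat{\id}^r$ and invoking the uniqueness clauses in Propositions~\ref{propred} and~\ref{propnormal} (or Theorem~\ref{bplatenor}(c) directly); once stated this way it is immediate, but it is the conceptual crux and should be spelled out.
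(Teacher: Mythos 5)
Your reduction of (b)$\Leftrightarrow$(c) and (c)$\Leftrightarrow$(d) to the two boilerplates is exactly the paper's route: both hinge on identifying $\Red^{-1}(C_r^*(G),\delta_G^r)$ and $\Nor(C^*(G),\delta_G)$ with $(C_r^*(G),\delta_G^n)$ via Theorem~\ref{bashatbplate}(b) and Theorem~\ref{bplatenor}(c) applied to the trivial system $A=\C$, and your bookkeeping there (including $q^n=q^r$ for the converse of (c)$\Rightarrow$(d)) is correct.

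The genuine gap is in (a)$\Rightarrow$(b), which you dismiss as ``a matter of definitions'' and ``routine.'' A strictly continuous unitary homomorphism $u:G\to UM(B)$ integrates to a nondegenerate homomorphism $\pi_u$ of the \emph{full} group algebra $C^*(G)$, not of $C_r^*(G)$; to produce the morphism $\phi:C_r^*(G)\to M(B)$ required in (b) you must show that $\ker\pi_u\supseteq\ker q^r=\ker\pi_\lambda$, i.e.\ that $u$ is weakly contained in the regular representation, and this is precisely where the hypothesis $\overline\delta(u_s)=u_s\otimes k_G^r(s)$ earns its keep. The paper's argument is: the covariance condition gives $\overline\delta\circ\pi_u=\pi_{u\otimes k_G^r}$; composing with a faithful representation $\tau\otimes\pi_\lambda^r$ of $B\otimes C_r^*(G)$ turns $\pi_{u\otimes k_G^r}$ into the integrated form of $(\overline\tau\circ u)\otimes\lambda$, which by Fell absorption is unitarily equivalent to $1\otimes\lambda$, so $\ker\pi_{u\otimes k_G^r}=\ker\pi_\lambda$; then the injectivity of $\delta$ (and of $\overline\delta$) yields $\ker\pi_u=\ker\pi_\lambda$, so $\pi_u$ descends to $\pi_u^r$ on $C_r^*(G)$, whose $\delta_G^r$--$\delta$ equivariance is then checked on the generators $k_G^r(s)$. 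None of this is definitional; it is the analytic heart of the proposition (it is essentially Landstad's original point), and your claim that ``such $\phi$ corresponds exactly to'' such a $u$ is false as stated without it --- the correspondence $u\mapsto\overline\phi\circ k_G^r$ is injective but its surjectivity onto the $u$'s of (a) is what needs proof. The easy direction (b)$\Rightarrow$(a), setting $u_s=\overline\phi(k_G^r(s))$ and computing with \eqref{barontensors}, is routine as you say; the converse is not, and your proposal as written does not supply it.
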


\begin{remark} Notice that in (d), the system $(C^*(G),\delta_G)$ is \emph{not} itself an object in the category $\cscoactndn(G)$ unless $G$ is amenable (because the normalisation is the dual coaction on the reduced $C^*$-algebra \cite[Proposition~A.61]{enchilada}). However, it is an object in the larger category $\cscoactnd(G)$ of full coactions, and this is enough for the comma category to make sense.
\end{remark}

\begin{proof}
For (b) $\Longrightarrow$ (a), let $(B,\delta,\new{\phi})$ be an object in $(C_r^*(G),\delta_G^r)\downarrow \cscoactndr(G)$.  Take $u:=\overline\phi\circ k_G^r$, which is strictly continuous because $\overline\phi$ is. Since $\phi$ is $\delta^r$\,--\,$\delta$ equivariant, that is, $\overline{\phi\otimes\id}\circ \delta_G^r=\overline\delta\circ\phi$, we have
\begin{align*}
\overline\delta(u_s)&=\overline\delta(\overline\phi(k_G^r(s))=\overline{\overline\delta\circ\phi}(k_G^r(s))=\overline{\overline{\phi\otimes\id}\circ\delta_G^r}(k_G^r(s))\\
&=\overline{\phi\otimes\id}\circ\overline{\delta_G^r}(k_G^r(s))=\overline{\phi\otimes\id}(k_G^r(s)\otimes k_G^r(s))\\
&=\overline\phi(k_G^r(s))\otimes\overline{\id}(k_G^r(s)) \quad \text{(using \eqref{barontensors}\,)}\\
&=u_s\otimes k_G^r(s).
\end{align*}

For (a) $\Longrightarrow$ (b),  suppose that we have $u$ as in (a). We have
\begin{equation*}\label{deltacircpi}
\overline{\overline\delta\circ\pi_u}(k_G(s))=\overline\delta\circ\new{\overline{\pi_u}}(k_G(s))=\overline\delta(u_s)=u_s\otimes k_G^r(s),
\end{equation*}
so $\overline\delta\circ\pi_u=\pi_{u\otimes k_G^r}$. We now choose a faithful nondegenerate representation $\tau$ of $B$ on a Hilbert space $H$. Since the integrated form $\pi_\lambda$ of the regular representation $\lambda:G\to U(L^2(G))$ factors through a faithful representation $\pi_\lambda^r$ of $C_r^*(G)$, we have a faithful nondegenerate representation $\tau\otimes \pi_\lambda^r$ of $B\otimes C_r^*(G)$ on $H\otimes L^2(G)$. Applications  of \eqref{barcomp} and \eqref{barontensors} show that the composition $\overline{\tau\otimes \pi_\lambda^r}\circ\pi_{u\otimes k_G^r}$ is the integrated form of $(\overline{\tau}\circ u)\otimes \lambda$. Since every representation of the form $V\otimes \lambda$ is unitarily equivalent to $1\otimes \lambda$, we have
\[
\ker(\pi_{u\otimes k_G^r})=\ker\big(\overline{\tau\otimes \pi_\lambda^r}\circ\pi_{u\otimes k_G^r}\big)=\ker\pi_{1\otimes\lambda}=\ker \pi_\lambda.
\]
Since $\delta$ and $\overline\delta$ are injective, we have
\[
\ker\pi_u=\ker(\overline\delta\circ\pi_u)= \ker(\pi_{u\otimes k_G^r})=\ker\pi_\lambda,
\]
and there is a homomorphism $\pi_u^r:C_r^*(G)\to M(B)$ such that $\pi_u=\pi_u^r\circ q^r$; it is nondegenerate because it has the same range as $\pi_u$. Now the calculation
\begin{align*}
\overline{\overline{\pi_u^r\otimes\id}\circ \delta_G^r}(k_G^r(s))
&=\overline{\pi_u^r\otimes\id}\circ \overline{\delta_G^r}(k_G^r(s))\quad \text{(using \eqref{barcompinC*}\,)}\\
&=\overline{\pi_u^r\otimes\id}(k_G^r(s)\otimes k_G^r(s))\\
&=\new{\overline{\pi_u^r}}(k_G^r(s))\otimes \new{\overline{\id}}(k_G^r(s))\quad\text{(using \eqref{barontensors}\,)}\\
&=u_s\otimes k_G^r(s)\\
&=\overline\delta(u_s)\\
&=\overline\delta\circ\new{\overline{\pi_u^r}}(k_G^r(s))\\
&=\overline{\overline\delta\circ\pi_u^r}(k_G^r(s))
\end{align*}
implies that $\overline{\pi_u^r\otimes\id}\circ \delta_G^r=\overline\delta\circ\pi_u^r$, which says that $(B,\delta,\pi_u^r)$ is an element of the comma category.

\smallskip

For (b) $\Longrightarrow$ (c),  suppose that $(B,\delta,\phi)$ belongs to the comma category in (b). Then Theorem~\ref{bashatbplate}(a) says that applying $\Red^{-1}$ to the reduced system $(B,\delta)$ and the morphism $\phi$ gives a normal system $(B,\quigg)$ and a morphism $\phi$ from $\Red^{-1}(C_r^*(G),\delta_G^r)$ to $(B,\quigg)$. Part (b) of Theorem~\ref{bashatbplate} implies that $\Red^{-1}(C_r^*(G),\delta_G^r)=(C_r^*(G),\delta_G^n)$, so $(B,\quigg,\phi)$ has the properties required of $(B,\epsilon,\psi)$ in (c).

To establish the implication (c) $\Longrightarrow$ (b), we apply $\Red$ to the system $(B,\epsilon,\psi)$ in (c), and use the properties of $\Red$ established in Theorem~\ref{bashatbplate}.

\smallskip

For (c) \new{$\Longleftrightarrow$} (d), note that
Theorem~\ref{bplatenor}(c) implies that the normalisation $(\delta_G)^n$ of the comultiplication on $C^*(G)$ is the full dual coaction $\pi_{k_G^r\otimes k_G}$ on the reduced group algebra $C_r^*(G)$ (which because of this is usually denoted $\delta_G^n$). In particular, we have $q^n=q^r$. If $(B,\epsilon,\theta)$ is as in (d), \new{then} $\theta^n$ is a morphism from $\Nor(C^*(G),\delta_G)=(C_r^*(G),\delta_G^n)$ to $\Nor(B,\epsilon)=(B,\epsilon)$, and $(B,\epsilon,\theta^n)$ is an element of the comma category in \new{(c)}. On the other hand, if $(B,\epsilon,\psi)$ is as in \new{(c)}, then $(B,\epsilon,\psi\circ q^r)=(B,\epsilon,\psi\circ q^n)$ belongs to the comma category in \new{(d)}.
\end{proof}

\subsection{Mansfield's imprimitivity theorem}
\label{subsecMansfield}

Mansfield's imprimitivity theorem \cite{man} is the analogue for crossed products by coactions of the imprimitivity theorem of Rieffel and Green for ordinary crossed products. As in \cite{hrman} and \cite{kqrproper}, we will approach Mansfield's theorem via Rieffel's general theory of proper actions \cite{proper, integrable}.

The version of Rieffel's theory used in \cite{hrman} and \cite{kqrproper} starts with a free and proper right action of $G$ on a locally compact space $T$, and the induced action $\rt$ of $G$ on $C_0(T)$. We then consider triples $(A,\alpha,\phi)$ in which $\alpha$ is an action of $G$ on a $C^*$-algebra $A$ and $\phi:C_0(T)\to M(A)$ is a homomorphism such that $\alpha_s\circ\phi=\phi\circ\rt_s$ (so that $(A,\alpha,\phi)$ is an object in the comma category $(C_0(T),\rt)\downarrow\csactnd(G)$). Rieffel proved in \cite[Theorem~5.7]{integrable} that $\alpha$ is proper  and saturated in the sense of \cite{proper} with respect to the subalgebra \[
A_0:=\phi(C_c(T))A\phi(C_c(T)):=\{\phi(f)a\phi(g):f,g\in C_c(G),\;a\in A\},
\]
which implies that $A_0$ completes to give a Morita equivalence $Z(A,\alpha,\phi)$ between $A\times_{\alpha,r}G$ and a generalised fixed-point algebra $\Fix(A,\alpha,\phi)$ sitting in $M(A)$.

To get Mansfield's theorem for a reduced coaction $(B,\delta)$ and a closed subgroup $H$ of $G$, we apply Rieffel's results with $A=B\times_\delta G$, $\alpha=\hat\delta|_H$, $(T,G)=(G,H)$ and $\phi=j_G:C_0(G)\to M(B\times_\delta G)$. The crucial point is that Rieffel's fixed-point algebra turns out to be the crossed product by the homogeneous space $G/H$, which is the $C^*$-subalgebra
\[
B\times_{\delta,r}(G/H):=\clsp\{j_B(b)j_G|(f):b\in B, f\in C_0(G/H)\}
\]
of $M(B\times_\delta G)$. (The equality of $\Fix(B\times_\delta G,\hat\delta|,j_G)$ and $B\times_{\delta,r}(G/H)$ follows from Theorem~3.1 of \cite{hrman} by an argument given at the beginning of \cite[\S6]{kqrproper}. We stress that this equality is highly nontrivial, and its proof relies heavily on intricate calculations in Mansfield's original paper \cite{man}. Indeed, even the assertion that $B\times_{\delta,r}(G/H)$ is a $C^*$-subalgebra appears to be deep.)

We want to apply our boilerplate to  Mansfield's imprimitivity theorem, and we need to define crossed products by homogeneous spaces for full coactions. For a normal coaction $(C,\epsilon)$, there is no problem: with our conventions, $(C\times_{\epsilon} G,j_C,j_B)$ is exactly the same as $(C\times_{\epsilon^r} G,j_C,j_B)$, so we can define  $C\times_{\epsilon,r}(G/H)$ to be $C\times_{\epsilon^r,r}(G/H)$. Note that $C\times_{\epsilon,r}(G/H)$ is a $C^*$-subalgebra of $M(C\times_\epsilon G)$.

\begin{lemma}\label{lemG/H}
Let $H$ be a closed subgroup of a locally compact group $G$, and suppose that $(D,\eta)$ is a full coaction of $G$. Then the extension  to the multiplier algebra of the isomorphism $q^n\times G:D\times_{\eta}G\to D^n\times_{\eta^n}G$ of Corollary~\ref{natisonor} maps $\clsp\{j_D(d)j_G|(f):b\in B, f\in C_0(G/H)\}$ onto $D^n\times_{\eta^n,r}(G/H)$.
\end{lemma}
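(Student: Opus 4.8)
The statement is about the isomorphism $\Phi:=\overline{q^n\times G}:M(D\times_{\eta}G)\to M(D^n\times_{\eta^n}G)$ restricting to a map between the subalgebras that implement crossed products by $G/H$. The key identities to exploit are the defining relations of $q^n\times G$ from Proposition~\ref{defCPfunctor}: $\overline{q^n\times G}\circ j_D = \overline{j_{D^n}}\circ q^n$ and $\overline{q^n\times G}\circ j_G^D = j_G^{D^n}$, together with the fact that $\eta^n$ is normal, so that $D^n\times_{\eta^n}G=D^n\times_{(\eta^n)^r}G$ as subalgebras of $M(D^n\otimes\K(L^2(G)))$ by Proposition~\ref{propred}(c), and hence $D^n\times_{\eta^n,r}(G/H)$ is by definition $D^n\times_{(\eta^n)^r,r}(G/H)=\clsp\{j_{D^n}(c)j_G^{D^n}|(f):c\in D^n, f\in C_0(G/H)\}$.

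First I would record that $\Phi$ is an isomorphism of $C^*$-algebras and therefore extends to an isomorphism of multiplier algebras which is strictly continuous; in particular it commutes with the barred extensions, so $\overline\Phi\circ\overline{j_G^D}=\overline{j_G^{D^n}}$ on $C_b(G)$, and restricting to $C_0(G/H)\subset C_b(G)$ gives $\overline\Phi\circ(\overline{j_G^D}|)= \overline{j_G^{D^n}}|$, i.e. $\Phi\circ j_G^D| = j_G^{D^n}|$. Combined with $\Phi\circ j_D=\overline{j_{D^n}}\circ q^n$, we get for the generators
\[
\Phi\big(j_D(d)\,j_G^D|(f)\big)=\overline{j_{D^n}}(q^n(d))\,j_G^{D^n}|(f),
\]
and since $q^n:D\to D^n$ is surjective, as $d$ ranges over $D$ the elements $q^n(d)$ range over all of $D^n$. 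Taking closed linear spans and using that $\Phi$ is an isometric isomorphism of $M(D\times_\eta G)$ carrying closed subspaces to closed subspaces, $\Phi$ maps $\clsp\{j_D(d)j_G^D|(f):d\in D,f\in C_0(G/H)\}$ onto $\clsp\{j_{D^n}(c)j_G^{D^n}|(f):c\in D^n, f\in C_0(G/H)\}$, which is exactly $D^n\times_{\eta^n,r}(G/H)$.

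**The main obstacle.** The genuinely delicate point is not the algebraic manipulation but making sure the objects are what we think they are: one must know that $\clsp\{j_{D^n}(c)j_G^{D^n}|(f)\}$ really is a $C^*$-subalgebra (this is the ``highly nontrivial'' fact flagged in the discussion preceding the lemma, coming through Theorem~3.1 of \cite{hrman} and Mansfield's calculations via the identification with a Rieffel fixed-point algebra), and that the definition of $C\times_{\epsilon,r}(G/H)$ for a normal coaction $(C,\epsilon)$ via $C\times_{\epsilon^r,r}(G/H)$ is consistent with the convention $j_C=j_C$, $j_G=j_G$ being literally the same for $\epsilon$ and $\epsilon^r$ — this is exactly Proposition~\ref{propred}(c). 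Once those identifications are in place the lemma is a short check; so in the write-up I would state up front that we use Proposition~\ref{propred}(c) to identify $D^n\times_{\eta^n,r}(G/H)$ with $\clsp\{j_{D^n}(c)j_G^{D^n}|(f)\}$, then run the generator computation above, and conclude by surjectivity of $q^n$ and the fact that an isomorphism of $C^*$-algebras carries closed spans of matched generating sets onto each other.
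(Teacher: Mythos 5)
Your proof is correct and follows essentially the same route as the paper: the paper simply recalls that $q^n\times G$ is by definition $(j_{D^n}\circ q^n)\times j_G$, computes $\overline{q^n\times G}(j_D(d)j_G|(f))=j_{D^n}(q^n(d))j_G^{D^n}|(f)$ on generators, and concludes from the fact that $\overline{q^n\times G}$ is an isomorphism (plus surjectivity of $q^n$), exactly as you do. Your extra care with the strict-continuity/extension step and with the identification of $D^n\times_{\eta^n,r}(G/H)$ via Proposition~\ref{propred}(c) just makes explicit what the paper leaves implicit.
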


\begin{proof} Recall that $q^n\times G$ is by definition $(j_{D^n}\circ q^n)\times j_G$, so we have
\[
\overline{q^n\times G}(j_D(d)j_G|(f))=j_{D^n}(q^n(d))j_G^{D^n}|(f)
\]
and the result follows because $\overline{q^n\times G}$ is an isomorphism.
\end{proof}

It follows from Lemma~\ref{lemG/H} that $\clsp\{j_D(d)j_G|(f):b\in B, f\in C_0(G/H)\}$ is a $C^*$-subalgebra of $M(D\times_\eta G)$, and it makes sense to define the crossed product by
\[
D\times_{\eta,r}(G/H):=\clsp\{j_D(d)j_G|(f):b\in B, f\in C_0(G/H)\}.
\]
Then Lemma~\ref{lemG/H} says that $\overline{q^n\times G}$ restricts to an isomorphism $q^n\times_r(G/H)$ of $D\times_{\eta,r}(G/H)$ onto $D^n\times_{\eta^n,r}(G/H)$.

\begin{thm}\label{mansfield1}
Let $H$ be a closed subgroup of a locally compact group $G$.
\smallskip

\textnormal{(a)} For every reduced coaction $(B,\delta)$, $Z(B\times_\delta G,\hat\delta|,j_G)$ is a $(B\times_\delta G)\times_{\hat\delta|,r}H$\,-\,$B\times_{\delta,r}(G/H)$ imprimitivity bimodule.

\smallskip
\textnormal{(b)}  For every normal coaction $(C,\epsilon)$, $Z(C\times_\epsilon G,\hat\epsilon|,j_G)$ is a $(C\times_\epsilon G)\times_{\hat\delta|,r}H$\,-\,$C\times_{\delta,r}(G/H)$ imprimitivity bimodule \new{that} coincides with $Z(C\times_{\epsilon^r} G,\hat{\epsilon^r}|,j_G)$.
\smallskip

\textnormal{(c)}  Suppose that $(D,\eta)$ is a full coaction. Then the isomorphism $q^n\times G:D\times_{\eta}G\to D^n\times_{\eta^n}G$ of Corollary~\ref{natisonor} maps
\[
A_0:=j_G(C_c(G))(D\times_{\eta} G)j_G(C_c(G))\ \text{ onto }\ B_0:=j_G(C_c(G))(D^n\times_{\eta^n} G)j_G(C_c(G)),
\]
and extends to an isometric map $\psi$ of $Z(D\times_{\eta} G,\hat\eta, j_G)$ onto $Z(D^n\times_{\eta^n} G,\hat{\eta^n}, j_G)$ such that $((q^n\times G)\times_rH,\psi,q^n\times_r(G/H))$ is an isomorphism of imprimitivity bimodules.
\end{thm}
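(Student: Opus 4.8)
The plan is to prove the three parts in sequence, since (b) will reduce to (a) and (c) will reduce to (b) via the normalisation isomorphism of Corollary~\ref{natisonor}. Part (a) is an assembly of the ingredients recalled just before the theorem: with $A:=B\times_\delta G$, $\alpha:=\hat\delta|_H$, $(T,G):=(G,H)$ and $\phi:=j_G$, one checks that $(A,\alpha,\phi)$ is an object of $(C_0(G),\rt)\downarrow\csactnd(H)$ --- the equivariance $\hat\delta_s\circ j_G=j_G\circ\rt_s$ for $s\in H$ being immediate from the characterisation of the dual action --- and that $H$ acts freely and properly on $G$ by right translation. Then \cite[Theorem~5.7]{integrable} gives that $\hat\delta|_H$ is proper and saturated with respect to $A_0=j_G(C_c(G))(B\times_\delta G)j_G(C_c(G))$, which is precisely the assertion that $Z(B\times_\delta G,\hat\delta|,j_G)$ is a $(B\times_\delta G)\times_{\hat\delta|,r}H$\,-\,$\Fix(B\times_\delta G,\hat\delta|,j_G)$ imprimitivity bimodule, and $\Fix(B\times_\delta G,\hat\delta|,j_G)=B\times_{\delta,r}(G/H)$ is \cite[Theorem~3.1]{hrman} together with the argument opening \cite[\S6]{kqrproper}. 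For (b), when $(C,\epsilon)$ is normal, Proposition~\ref{propred}(c) tells us that $C\times_\epsilon G$ and $C\times_{\epsilon^r}G$ are literally the same subalgebra of $M(C\otimes\K(L^2(G)))$, with the same $j_C$ and $j_G$; since a dual action is determined by its values on the generators $j_C(c)j_G(f)$, we also get $\hat\epsilon=\hat{\epsilon^r}$, so the Rieffel data $(C\times_\epsilon G,\hat\epsilon|,j_G)$ and $(C\times_{\epsilon^r}G,\hat{\epsilon^r}|,j_G)$ coincide, and (a) applied to the reduced coaction $\epsilon^r$ finishes (b), recalling that $C\times_{\epsilon,r}(G/H)$ was defined to be $C\times_{\epsilon^r,r}(G/H)$.

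Part (c) is the substantive one. First I would note that Rieffel's machinery, hence the symbol $Z(D\times_\eta G,\hat\eta|,j_G)$, makes sense for any full coaction $(D,\eta)$, since its only inputs are a $C^*$-algebra, an action of $H$ on it, an $H$-equivariant nondegenerate homomorphism of $C_0(G)$, and the free proper action of $H$ on $G$. The key step is to verify that $\Phi:=q^n\times G$ is an isomorphism of Rieffel data from $(D\times_\eta G,\hat\eta|,j_G)$ onto $(D^n\times_{\eta^n}G,\hat{\eta^n}|,j_G)$, that is, $\overline\Phi\circ j_G^D=j_G^{D^n}$ and $\overline\Phi\circ\overline{\hat\eta_s}=\overline{\hat{\eta^n}_s}\circ\overline\Phi$ for every $s\in G$; the first is part of the characterisation of $q^n\times G$ in Proposition~\ref{defCPfunctor}, and the second follows by combining $\overline\Phi\circ j_D=\overline{j_{D^n}}\circ q^n$ (also from Proposition~\ref{defCPfunctor}) with the identities $\overline{\hat\eta_s}(j_D(d)j_G(f))=j_D(d)j_G(\rt_s(f))$ and its analogue for $\hat{\eta^n}$, evaluated on the generators $j_D(d)j_G(f)$. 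Granting this, everything else is the naturality of Rieffel's construction under isomorphisms of the triple: $\overline\Phi$ carries $A_0$ onto $B_0$ (the first displayed claim of (c)); the two inner products and the left and right module actions on $A_0$ are built only from multiplication, $\hat\eta|$ and $j_G$, all respected by $\overline\Phi$, so $\Phi|_{A_0}$ extends by density to an isometry $\psi$ of the completions; $\Phi$ induces the isomorphism $(q^n\times G)\times_rH$ of reduced crossed products and an isomorphism $\Phi|$ of generalised fixed-point algebras; and the compatibility relations making $((q^n\times G)\times_rH,\psi,\Phi|)$ an isomorphism of imprimitivity bimodules hold because they hold on the dense submodule $A_0$. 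To identify the right-hand coefficient map as the promised $q^n\times_r(G/H)$, I would combine (b) --- which gives $\Fix(D^n\times_{\eta^n}G,\hat{\eta^n}|,j_G)=D^n\times_{\eta^n,r}(G/H)$ --- with Lemma~\ref{lemG/H}, which says $\overline\Phi$ maps $D\times_{\eta,r}(G/H)=\clsp\{j_D(d)j_G|(f)\}$ onto $D^n\times_{\eta^n,r}(G/H)$; since $\overline\Phi$ also maps $\Fix(D\times_\eta G,\hat\eta|,j_G)$ onto $\Fix(D^n\times_{\eta^n}G,\hat{\eta^n}|,j_G)$, applying $\overline\Phi^{-1}$ shows $\Fix(D\times_\eta G,\hat\eta|,j_G)=D\times_{\eta,r}(G/H)$ and $\Phi|=q^n\times_r(G/H)$.

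The main obstacle is the verification hidden in the phrase ``naturality of Rieffel's construction'' in part (c): one genuinely has to return to the explicit formulas for the $(D\times_\eta G)\times_{\hat\eta|,r}H$-valued and $\Fix$-valued inner products, the two module actions, and the generalised fixed-point algebra on the dense $*$-subalgebra $A_0$, and check term by term that each is preserved by an isomorphism of the underlying triple intertwining the action and the $C_0(G)$-structure; the isometry of $\psi$ and the bimodule-isomorphism identities then follow for free from norm-density. A secondary, more conceptual point to pin down first is that Rieffel's theory, and the identification of $\Fix$ with the crossed product by $G/H$, depend only on the data of a $C^*$-algebra, an $H$-action, and a $C_0(G)$-map, not on the coaction being reduced --- which is what legitimises writing $Z(D\times_\eta G,\hat\eta|,j_G)$ for a full coaction in the first place.
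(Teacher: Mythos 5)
Your proposal is correct and follows essentially the same route as the paper: (a) by citing \cite[Theorem~3.1]{hrman} and \cite{kqrproper}, (b) by observing that the conventions make the Rieffel data for $\epsilon$ and $\epsilon^r$ literally identical, and (c) by checking that $q^n\times G$ is an equivariant isomorphism intertwining the $j_G$'s and then invoking naturality of Rieffel's construction under isomorphisms of the triple. The only difference is organisational: the paper packages the step you call ``naturality of Rieffel's construction'' as a standalone result (Lemma~\ref{lem-general}, about isomorphisms in the comma category $(C_0(T),\rt)\downarrow\csactnd(G)$) and carries out there the inner-product and module-action verifications that you flag as the remaining routine work, while your explicit identification of the right-hand coefficient map with $q^n\times_r(G/H)$ via Lemma~\ref{lemG/H} makes precise a point the paper leaves implicit.
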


\begin{remark}
When the subgroup $H$ is normal, a coaction $(B,\delta)$ of $G$ restricts to a coaction $\delta|$ of the quotient $G/H$, and one would prefer a version of Mansfield's theorem \new{that} uses the crossed product $B\times_{\delta|} (G/H)$ by this coaction (as opposed to the crossed product $B\times_{\delta,r} (G/H)$ by the homogeneous space appearing in Theorem~\ref{mansfield1}).

For a full coaction, $\delta|:=\overline{{\id}\otimes \pi_q}\circ\delta$, and the crossed products $B\times_{\delta|} (G/H)$ and $B\times_{\delta,r} (G/H)$ may differ. If the map $j_B\times j_G|:B\times_{\delta|}(G/H)\to M(B\times_\delta G)$ is injective (which is automatic if $\delta$ is normal \cite[Lemma~3.2]{KQimprimitivity}), then it is an isomorphism of $B\times_{\delta|} (G/H)$ onto $B\times_{\delta,r} (G/H)$, and we obtain the Morita equivalence between $(B\times_\delta G)\times_{\hat\delta}H$ and $B\times_{\delta|}(G/H)$ established in \cite[Corollary~3.4]{KQimprimitivity}. For a reduced coaction $(B,\delta)$, the restriction $\delta|$ is by definition the restriction $\delta^Q|$ of the Quiggification; since $\delta^Q$ is normal, we again get an isomorphism of $B\times_{\delta|}(G/H)$ onto $B\times_{\delta,r} (G/H)$. The resulting Morita equivalence was first obtained in \cite[Proposition~4.2]{hrman} and is a direct generalisation of Mansfield's original theorem \cite{man}. See \cite[\S4]{hrman} for further details.

However, we believe that Theorem~\ref{mansfield1}(c) itself is new.
\end{remark}

\begin{proof}[Proof of Theorem~\ref{mansfield1}]
As we observed above, part (a) was deduced in \cite{kqrproper} from \cite[Theorem~3.1]{hrman}. Part (b) is now easy: with the definition of $C\times_{\epsilon,r}(G/H)$ we have given and the conventions for crossed products we are using, the coefficient algebras are exactly the same, and hence so is the bimodule.  Part (c) follows from the Lemma~\ref{lem-general} below by taking $A=D\times_\eta G$, $\alpha=\hat\eta|$, $\phi_A=j_G^D$, $B=D^n\times_{\eta^n} G$, $\beta=\new{\widehat{\eta^n}}|$, $\phi_B=j_G^{D^n}$, $(T,G)=(G,H)$ and $\rho=q^n\times G$.
\end{proof}

\begin{lemma}\label{lem-general}
Let $G$ be a locally compact group  and let $\rho:(A,\alpha,\phi_A)\to (B,\beta, \phi_B)$ be an isomorphism in the comma category $(C_0(T),\rt)\downarrow\csactnd(G)$. Then $\rho$ maps $A_0:=\phi_A(C_c(T))A\phi_A(C_c(T))$ onto $B_0:=\phi_B(C_c(T))B\phi_B(C_c(T))$ and extends to an isometric map $\psi$ of $Z(A,\alpha,\phi_A)$ onto $Z(B,\beta,\phi_B)$ such that $(\rho\times_r G,\psi,\rho|)$ is an isomorphism of imprimitivity bimodules.
\end{lemma}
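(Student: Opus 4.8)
The plan is to unpack what an isomorphism $\rho:(A,\alpha,\phi_A)\to(B,\beta,\phi_B)$ in $(C_0(T),\rt)\downarrow\csactnd(G)$ gives us: $\rho:A\to B$ is a $C^*$-isomorphism (the isomorphisms in $\cscatnd$ are honest $C^*$-isomorphisms), $\rho$ is $\alpha$--$\beta$ equivariant in the extended sense $\overline{\rho}\circ\alpha_s=\beta_s\circ\rho$, and $\overline{\rho}\circ\phi_A=\phi_B$ as maps $C_0(T)\to M(B)$. From the last identity, since $\rho$ carries $C_c(T)$-parts to $C_c(T)$-parts, $\rho(A_0)=\rho(\phi_A(C_c(T))A\phi_A(C_c(T)))=\phi_B(C_c(T))\rho(A)\phi_B(C_c(T))=B_0$; this is the easy first step, and it already shows $\rho$ restricts to a bijection of the dense subspaces underlying the two Rieffel bimodules.

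Next I would recall the construction of $Z(A,\alpha,\phi_A)$ from Rieffel's proper-action machinery as used in \cite{hrman,kqrproper}: it is the completion of $A_0$ (viewed inside $M(A)$) with respect to the $A\times_{\alpha,r}G$-valued and $\Fix(A,\alpha,\phi_A)$-valued inner products, which are built out of the integrated formulas involving $\alpha$, the convolution on $C_c(G,A)$, and the faithful conditional-expectation-type averaging over $G$. The point is that every one of these structure maps is expressed purely in terms of the $C^*$-algebra structure of $A$, the action $\alpha$, and the homomorphism $\phi_A$ — all of which $\rho$ intertwines with their $B$-counterparts. So I would check, on the generators $\phi_A(f)a\phi_A(g)\in A_0$, that $\rho$ transports the left inner product to the right one: $\rho$ induces the isomorphism $\rho\times_r G:A\times_{\alpha,r}G\to B\times_{\beta,r}G$ (naturality of the reduced-crossed-product functor for actions, which holds because $\rho$ is an equivariant isomorphism), and under this isomorphism ${}_{A\times_{\alpha,r}G}\langle x,y\rangle$ is carried to ${}_{B\times_{\beta,r}G}\langle \rho x,\rho y\rangle$ — this is a direct computation using $\overline{\rho}\circ\alpha_s=\beta_s\circ\rho$ and the formula for the inner product. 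Likewise $\rho$ restricts to an isomorphism $\rho|$ of the fixed-point algebras $\Fix(A,\alpha,\phi_A)=\Fix(B,\beta,\phi_B)$ inside the multiplier algebras (using $\overline\rho\circ\phi_A=\phi_B$ and that $\rho$ commutes with the $G$-averaging), and $\rho$ intertwines the $\Fix$-valued inner products accordingly. Since the inner products are preserved, $\rho:A_0\to B_0$ is isometric for the bimodule norm and hence extends uniquely to an isometry $\psi:Z(A,\alpha,\phi_A)\to Z(B,\beta,\phi_B)$; it is onto because $\rho(A_0)=B_0$ is dense in the target.

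Finally I would assemble the three maps $(\rho\times_r G,\psi,\rho|)$ and check the two module-action compatibilities, $\psi(a\cdot x)=(\rho\times_r G)(a)\cdot\psi(x)$ and $\psi(x\cdot m)=\psi(x)\cdot(\rho|)(m)$; again this reduces to a computation on $a\in A\times_{\alpha,r}G$, $x\in A_0$, $m\in\Fix$, using that $\rho$ is a homomorphism compatible with all the structure. The bulk of the work, and the main obstacle, is the bookkeeping in identifying exactly which formulas define the $Z$-bimodule and its inner products in the references and then verifying $\rho$-equivariance of each — there is no conceptual difficulty, but one must be careful that Rieffel's averaging process (and the fixed-point algebra living in $M(A)$ rather than $A$) is transported correctly by the multiplier extension $\overline\rho$; once those identifications are in place, everything follows formally from $\rho$ being an equivariant $C^*$-isomorphism that intertwines $\phi_A$ and $\phi_B$.
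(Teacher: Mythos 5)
Your proposal is correct and follows essentially the same route as the paper's proof: unpack the comma-category isomorphism to get $\rho(A_0)=B_0$, use equivariance and $\overline\rho\circ\phi_A=\phi_B$ to see that $\overline\rho$ carries the inner products on $A_0$ to those on $B_0$ (the paper does this explicitly for the $\Fix$-valued inner product via its characterising integral formula), extend by continuity to the isometry $\psi$, and then verify the left and right module compatibilities by direct computation on dense subspaces. The only cosmetic differences are that the paper derives the isometry from the right inner product alone and obtains preservation of the left inner product at the end as a standard consequence, whereas you propose checking both inner products up front; also note $\Fix(A,\alpha,\phi_A)$ and $\Fix(B,\beta,\phi_B)$ are identified via $\overline\rho$ rather than literally equal.
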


\begin{proof}
That $\rho$ is an  isomorphism in the comma category means that it is an $\alpha$\,--\,$\beta$ equivariant isomorphism of $A$ onto $B$ satisfying $\overline\rho\circ\phi_A=\phi_B$. Thus $\rho(A_0)=B_0$. The right inner product $\langle a,b\rangle_R$ of $a,b\in A_0$ is the multiplier of $A$ such that
\[
c\cdot\langle a,b\rangle_R=\int_G c\alpha_s(a^*b)\,ds\ \text{ for every $c\in A_0$},
\]
and since $\rho$ is $\alpha$\,--\,$\beta$ equivariant, the multiplier $\overline{\rho}(\langle a,b\rangle_R)$ of $B$ has the property \new{that} characterises $\langle \rho(a),\rho(b)\rangle_R$. Thus $\rho$ is inner-product preserving, and extends to an isometry $\psi$ of the completion $Z(A,\alpha,\phi_A)$ of $A_0$ onto $Z(B,\beta,\phi_B)=\overline{B_0}$.

For $a,b,c\in A_0$ we have
\begin{align*}
\psi(a)\cdot\overline\rho(\langle b\,,\, c\rangle_R)&=\rho(a)\overline\rho(\langle b\,,\, c\rangle_R)=\rho(a\langle b\,,\, c\rangle_R)
=\psi(a\langle b\,,\, c\rangle_R)=\psi(a\cdot \langle b\,,\, c\rangle_R),
\end{align*}
so $\psi$ is a right-module homomorphism. Since $\rho\times_r G$ acts pointwise, for $f\in C_c(G,A_0)$ and $a\in A_0$ we have
\begin{align*}
\psi(f\cdot a)&=\psi\Big( \int f(s)\alpha_s(a)\Delta(s)^{1/2}\, ds \Big)=\rho\Big( \int f(s)\alpha_s(a)\Delta(s)^{1/2}\, ds \Big)\\
&=\int\rho(f(s))\beta_s(\rho(a))\Delta(s)^{1/2}\, ds =\int\rho\times_rG(f)(s)\beta_s(\rho(a))\Delta(s)^{1/2}\, ds \\
&=\rho\times_r H(f)\cdot\psi(a),
\end{align*}
so $\psi$ is a left-module homomorphism. A standard calculation now shows that $\psi$ preserves the left inner product, and hence is an isomorphism of imprimitivity bimodules.
\end{proof}

\subsection{Naturality of Mansfield imprimitivity}\label{secmannat} There is a second category $\cscat$ of $C^*$-algebras in which the imprimitivity bimodules are the isomorphisms, and if we view the various crossed product functors as taking values in $\cscat$, then it makes sense to ask whether the imprimitivity bimodules in Mansfield's theorem give a natural isomorphism. This was proved  for reduced coactions in Theorem~6.2 of \cite{kqrproper}. 

Just to be precise: the objects in $\cscat$ are $C^*$-algebras, and the morphisms from one $C^*$-algebra $A$ to another $B$ are isomorphism classes $[{}_AX_B]$ of full right-Hilbert $A$--$B$ bimodules. It was shown in \cite[\S2]{taco} that $\cscat$ is a category with composition defined by $[{}_AX_B][{}_BY_C]=[{}_A(X\otimes_B Y)_C]$, and that $[{}_AX_B]$ is an isomorphism if and only if $X$ is an imprimitivity bimodule.  Every nondegenerate homomorphism $\sigma:A\to M(B)$ gives a morphism $[\sigma]$ with underlying right Hilbert module $B_B$. 

To see what the naturality result in \cite{kqrproper} says, consider a morphism $\sigma:(B,\delta)\to (C,\epsilon)$ in $\cscoactndr(G)$. This induces nondegenerate morphisms  $\sigma\times G:=(\new{\overline{j_C}}\circ\sigma)\times j_G^C$ from $B\times_\delta G$ to $M(C\times_\epsilon G)$ and $(\sigma\times G)\times_r H=(i_{C\times G}\circ(\sigma\times G))\times i_G^{C\times G}$. Then Theorem~6.2 of \cite{kqrproper} says that the diagram
\begin{equation}
  \xymatrix@C=8pc@R=3pc{(B\times_\delta G)\times_{\hat\delta|,r}H\ar[r]^-{Z(B\times_\delta G,\hat\delta|,j^B_G)}
    \ar[d]_{(\sigma\times G)\times_r H} & B\times_{\delta,r}(G/H)\ar[d]^{(\sigma\times G)|} \\
    (C\times_\epsilon G)\times_{\hat\epsilon|,r}H\ar[r]_-{Z(C\times_\epsilon G,\hat\epsilon|,j^C_G)}&C\times_{\epsilon,r}(G/H)}
\end{equation}
commutes in $\cscat$, or in other words that the right-Hilbert bimodule
\[
Z(B\times_\delta G,\hat\delta|,j^B_G)\otimes_{ B\times_{\delta,r}(G/H)}\big(C\times_{\epsilon,r}(G/H)\big)\]
is isomorphic to
 \[\big((C\times_\epsilon G)\times_{\hat\epsilon|,r}H\big)\otimes_{(C\times_\epsilon G)\times_{\hat\epsilon|,r}H}Z(C\times_\epsilon G,\hat\epsilon|,j^C_G).
\]

We can now formulate our naturality result.

\begin{thm}\label{thm-naturality}
Let $H$ be a closed subgroup of a locally compact group $G$. Then Rieffel's bimodules $Z(B\times_\delta G, \hat\delta|, j_G)$ implement a natural isomorphism  between the functors $(B,\delta)\mapsto B\times_{\delta,r}(G/H)$ and $(B,\delta)\mapsto (B\times_\delta G)\times_{\hat\delta|,r}H$ from  \textnormal{(a)} $\cscoactndr(G)$ to $\cscat$, \textnormal{(b)} $\cscoactndn(G)$ to $\cscat$ and \textnormal{(c)} $\cscoactnd(G)$ to $\cscat$.
\end{thm}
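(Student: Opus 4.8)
The plan is to prove part (a) directly, using the argument of \cite[Theorem~6.2]{kqrproper} (whose statement we have recalled in the diagram above), and then to deduce (b) and (c) from (a) by transporting everything through the isomorphisms of categories and functors established in our boilerplates. For (a), the content is precisely that the square displayed before the theorem commutes in $\cscat$ for every morphism $\sigma:(B,\delta)\to(C,\epsilon)$ in $\cscoactndr(G)$; since the two vertical legs $(\sigma\times G)\times_r H$ and $(\sigma\times G)|$ are nondegenerate homomorphisms (hence give morphisms in $\cscat$ with underlying modules the coefficient algebras), and since the horizontal legs are the Rieffel imprimitivity bimodules from Theorem~\ref{mansfield1}(a), naturality is the assertion that the two resulting composite bimodules are isomorphic. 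This is exactly Theorem~6.2 of \cite{kqrproper}, so part (a) is nothing more than a restatement with our present conventions; the only thing to check is that our $Z(B\times_\delta G,\hat\delta|,j_G)$, our crossed products by $G/H$, and our induced homomorphisms $\sigma\times G$ and $(\sigma\times G)\times_r H$ coincide with the objects appearing there, which they do by construction (compare \S\ref{subsecMansfield}).

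For part (b), I would observe that $\cscoactndn(G)$ is a full subcategory of $\cscoactndr(G)$ via $\Red$ (Theorem~\ref{bashatbplate}(a)): a normal coaction $(C,\epsilon)$ is carried to $(C,\epsilon^r)$, and by Theorem~\ref{mansfield1}(b) the bimodule $Z(C\times_\epsilon G,\hat\epsilon|,j_G)$ literally equals $Z(C\times_{\epsilon^r}G,\widehat{\epsilon^r}|,j_G)$, while the coefficient algebras $(C\times_\epsilon G)\times_{\hat\epsilon|,r}H$ and $C\times_{\epsilon,r}(G/H)$ are by definition the same subalgebras of $M(C\times_\epsilon G)=M(C\times_{\epsilon^r}G)$ as those attached to the reduced coaction. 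The two functors in (b) are therefore the restrictions along $\Red$ of the two functors in (a), and the natural transformation in (b) is the restriction of the one in (a); restricting a natural isomorphism along a functor again gives a natural isomorphism, so (b) follows from (a).

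For part (c), I would use the normalisation functor $\Nor:\cscoactnd(G)\to\cscoactndn(G)$ (Theorem~\ref{bplatenor}) together with the isomorphisms $q^n\times G:D\times_\eta G\to D^n\times_{\eta^n}G$ of Corollary~\ref{natisonor} and their restrictions $q^n\times_r(G/H)$ to the $G/H$-crossed products (Lemma~\ref{lemG/H}), and the bimodule isomorphism $\psi$ of Theorem~\ref{mansfield1}(c). Concretely, for a full coaction $(D,\eta)$ one composes the bimodule $Z(D\times_\eta G,\hat\eta|,j_G)$ with the $\cscat$-isomorphisms coming from $q^n\times G$ and $q^n\times_r(G/H)$ to land on $Z(D^n\times_{\eta^n}G,\widehat{\eta^n}|,j_G)$; Theorem~\ref{mansfield1}(c) (really its $H$-version, obtained from Lemma~\ref{lem-general} applied to $\alpha=\hat\eta|_H$) says precisely that this composite of $\cscat$-morphisms is an isomorphism of imprimitivity bimodules, so the four maps $(q^n\times G)\times_r H$, $\psi$, $q^n\times_r(G/H)$, and the horizontal bimodules fit into a commuting square in $\cscat$. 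Thus the two functors in (c) are naturally isomorphic, via these $q^n$-maps, to the pullbacks along $\Nor$ of the two functors in (b); composing that natural isomorphism with the one supplied by (b) along $\Nor$ gives the desired natural isomorphism in (c), and one checks it is implemented by $Z(D\times_\eta G,\hat\eta|,j_G)$ itself (the $q^n$-maps cancel with those defining the crossed-product functors, exactly as in the proof that $q^n\times G$ intertwines $\CP$ and $\CP^n\circ\Nor$).

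The main obstacle is the naturality square in part (c): one must verify that the square whose horizontal edges are $Z(D\times_\eta G,\hat\eta|,j_G)$ and $Z(D^n\times_{\eta^n}G,\widehat{\eta^n}|,j_G)$ and whose vertical edges are $(q^n\times G)\times_r H$ and $q^n\times_r(G/H)$ actually commutes in $\cscat$. This is where Theorem~\ref{mansfield1}(c) does the real work, and its proof rests on Lemma~\ref{lem-general}, i.e.\ on the fact that an equivariant isomorphism of systems in the comma category $(C_0(T),\rt)\downarrow\csactnd(G)$ transports Rieffel's module, its inner products, and its module actions compatibly with the induced isomorphism of reduced crossed products. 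Once that lemma is in hand the remaining bookkeeping — identifying $\Nor$- and $\Red$-pullbacks of functors, checking the natural transformations are implemented by the claimed bimodules — is routine diagram-chasing in $\cscat$ of the same flavour as in \S\ref{secCP}.
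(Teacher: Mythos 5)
Your proposal is correct and follows essentially the same route as the paper: part (a) is cited from \cite[Theorem~6.2]{kqrproper}, part (b) is immediate because the reduction leaves all algebras, homomorphisms and bimodules literally unchanged, and part (c) is deduced by sandwiching the full-coaction square inside the normal-coaction square via the $q^n$-induced isomorphisms, with Theorem~\ref{mansfield1}(c) (via Lemma~\ref{lem-general}) giving the commuting rectangles and the naturality of $q^n\times G$ giving the commuting quadrilaterals. This matches the paper's diagram-chase almost exactly.
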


\begin{proof}[Proof of Theorem~\ref{thm-naturality}]
As mentioned above, (a) is Theorem~6.2 of \cite{kqrproper}. Part (b) follows immediately from (a) because, with our conventions, all the algebras, homomorphisms and bimodules for $(B,\delta)$ are exactly the same as they are for $(B,\delta^r)$.

For (c) we fix a morphism $\sigma:(B,\delta)\to (C,\epsilon)$, and let $\sigma^n$ be the unique morphism from $(B^n,\delta^n)$ to $(C^n,\epsilon^n)$ such that $\new{\overline{q^n}}\circ \sigma=\sigma^n\circ q^n$ (which we know exists by Lemma~\ref{uniquepropnormal}). Now we consider the following diagram.
\begin{equation*}
  \xymatrix{(B^{n}\times_{\delta^{n}}G)\times_{\new{\widehat{\delta^{n}}},r}H
\ar[rrr]^-{Z(B^{n}\times_{\delta^{n}}G)}\ar[ddd]_{(\sigma^{n}\times
  G)\times_r H}
&&& B^{n}\times_{\delta^{n},r}(G/H)\ar[ddd]^{(\sigma^{n}\times G)|}\\
&(B\times_{\delta}G)\times_{\hat\delta,r}H\ar[r]^-{\strut Z(B\times_\delta G)}
\ar[ul]_{(q^{n}\times G)\times_r H}\ar[d]_{(\sigma\times G)\times_r H}&
B\times_{\delta,r}(G/H)\ar[ru]^{q^{n}\times_r (G/H)}
\ar[d]^{(\sigma\times
G)|}\\
&(C\times_{\epsilon}G)\times_{\hat\epsilon,r}H\ar[r]^-{\strut Z(C\times_\epsilon
  G)}\ar[dl]_{(q^{n}\times G)\times_r H} &C\times_{\epsilon,r}
(G/H)\ar[dr]^{q^{n}\times_r(G/H)}\\
(C^{n}\times_{\epsilon^{n}}G)\times_{\new{\widehat{\epsilon^{n}}},r}H
\ar[rrr]^{Z(C^{n}\times_{\epsilon^{n}}G)}&&& C^{n}\times_{\epsilon^{n},r}(G/H).}
\end{equation*}
In this diagram, we know from (b)  that the outside square commutes, and we want to prove that the inside square commutes. If $(\phi,\theta,\psi):{}_AX_B\to {}_CY_D$ is an isomorphism of imprimitivity bimodules, then $x\otimes d\mapsto \theta(x)\cdot d$ is an isomorphism of ${}_A(X\otimes_BD)_D$ onto ${}_A(C\otimes_CY)_D={}_AY_D$, and hence Theorem~\ref{mansfield1}(c) implies that the top and bottom rectangles commute. The compositions in the right-hand quadrilateral are the restrictions of $\overline{\sigma^n\times G}\circ \overline{q^n\times G}$ and $\overline{q^n\times G}\circ \overline{\sigma\times G}$, so to see that that quadrilateral commutes it suffices for us to see that $(\sigma^n\times G)\circ(q^n\times G)=\overline{q^n\times G}\circ(\sigma\times G)$, which follows immediately from the functoriality of the crossed-product construction. A similar argument shows that the left-hand quadrilateral commutes.
\new{Since the arrows connecting the inside and outside squares are isomorphisms, it follows that the inside square commutes.}
\end{proof}


\appendix

\section{Barring and tensor products}\label{secbarring}

For every
nondegenerate homomorphism $\phi:A\to M(B)$, there is a unital homomorphism
$\overline \phi:M(A)\to M(B)$ such that $\overline\phi|_A=\phi$ (see \cite[Corollary~2.51]{tfb}, for
example). The extension $\overline\phi$ has to satisfy
$\overline\phi(m)(\phi(a)b)=\phi(ma)b$ for $m\in M(A)$, $a\in A$ and $b\in B$; this equation implies  that there is exactly one such extension, and that $\overline\phi$ is strictly continuous. The uniqueness implies the identity
\begin{equation}\label{barcompinC*}
\overline{\overline\phi\circ\psi}=\overline\phi\circ\overline\psi,
\end{equation}
which is often used but seldom mentioned.

Suppose that $\phi:A\to M(B)$ and $\psi:C\to M(D)$ are nondegenerate homomorphisms. Then there is a unique homomorphism $\phi\otimes\psi$  from the spatial tensor product $A\otimes C$ to the spatial tensor product $M(B)\otimes M(D)$ such that $\phi\otimes \psi(a\otimes c)=\phi(a)\otimes \psi(c)$ (see \cite[Proposition~B.13]{tfb}, for example). Although this homomorphism $\phi\otimes \psi$ takes values in a unital algebra, and hence in a multiplier algebra, it need not be nondegenerate (think of $C_0(X)\otimes C_0(X)$ going into $C_b(X)\otimes C_b(X)$), and hence cannot obviously be extended to $M(A\otimes C)$. To get round this, we compose $\phi\otimes\psi$ with the inclusion $\iota:M(B)\otimes M(D)\to M(B\otimes D)$ characterised by $\iota(m\otimes n)(b\otimes d)=(mb)\otimes (nd)$. (If $\pi:B\to B(\HH)$ and $\rho:D\to B(\K)$ are faithful, then the representation $\overline\pi\otimes\overline\rho$ is a faithful representation of $M(B)\otimes M(D)$ with range contained in $\overline{\pi\otimes\rho}(M(B\otimes D)$ (using \cite[B.11 and 2.53]{tfb}), so the homomorphism $\iota$ is well-defined and injective.)  Now one can easily check (see the next proposition) that $\iota\circ(\phi\otimes\psi):A\otimes C\to M(B\otimes D)$ is nondegenerate, and therefore has a unique extension to $M(A\otimes C)$. The extension $\overline{\phi\otimes\psi}$ \new{that} appears in the coaction literature, either explicitly or implicitly, is really $\overline{\iota\circ(\phi\otimes\psi)}$; the $\iota$ itself never appears. (We use the same symbol $\iota$ for any pair of $C^*$-algebras because we are just trying to emphasize that these maps are there.)

The properties of these extensions established in the next proposition are used without comment in the literature.

\begin{prop}\label{apponbarring} \textnormal{(a)} Suppose that $\phi:A\to M(B)$ and $\psi:C\to M(D)$ are nondegenerate homomorphisms. Then the composition
\[
\iota\circ(\phi\otimes\psi):A\otimes C\to M(B)\otimes M(D)\to M(B\otimes D)
\]
is a nondegenerate homomorphism, and its extension  to $M(A\otimes C)$ satisfies
\begin{equation}\label{barontensorsiota}
\overline{\iota\circ(\phi\otimes\psi)}(\iota(m\otimes n))=\iota\big(\overline\phi(m)\otimes\overline\psi(n)\big)\ \text{ for $m\in M(A)$, $n\in M(C)$.}
\end{equation}

\smallskip
\textnormal{(b)} Suppose that $\phi_i:A_i\to M(B_i)$ and $\psi_i:B_i\to M(C_i)$ are nondegenerate homomorphisms for $i=1$ and $i=2$. Then
\begin{equation}\label{barcompiota}
\iota\circ\big((\new{\overline{\psi_1}}\circ\phi_1)\otimes(\new{\overline{\psi_2}}\circ\phi_2)\big)=\overline{\iota\circ(\psi_1\otimes\psi_2)}\circ(\iota\circ(\phi_1\otimes \phi_2)).
\end{equation}

\smallskip
\textnormal{(c)} Suppose that $\phi:A\to M(B)$ is a nondegenerate homomorphism, $C$ is another $C^*$-algebra and define $(\phi\otimes 1)(a):=\phi(a)\otimes 1$. Then $\iota\circ(\phi\otimes 1)$ is nondegenerate and
\begin{equation*}
\overline{\iota\circ(\phi\otimes 1)}=\iota\circ(\overline\phi\otimes 1).
\end{equation*}
\end{prop}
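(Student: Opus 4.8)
The plan is to reduce all three parts to the identity \eqref{barontensorsiota} of part~(a), and to prove that identity by checking it on elementary tensors and invoking the relation that characterises the strict extension of a nondegenerate homomorphism. First I would dispose of the nondegeneracy claims. Writing $\Phi:=\iota\circ(\phi\otimes\psi)$, we have $\Phi(a\otimes c)(b\otimes d)=\phi(a)b\otimes\psi(c)d$; since $\lsp\{\phi(a)b\}$ is dense in $B$ and $\lsp\{\psi(c)d\}$ is dense in $D$, the algebraic tensor product of these dense subspaces is dense in $B\otimes D$, so $\Phi$ is nondegenerate and its extension $\overline\Phi:M(A\otimes C)\to M(B\otimes D)$ exists. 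The nondegeneracy statements needed in (b) (for $\iota\circ(\phi_1\otimes\phi_2)$) and in (c) are the same computation, with one factor replaced by a unit.

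For \eqref{barontensorsiota} I would use that $\overline\Phi(M)$ is the unique multiplier of $B\otimes D$ satisfying $\overline\Phi(M)(\Phi(x)y)=\Phi(Mx)y$ for all $x\in A\otimes C$ and $y\in B\otimes D$. Fixing $m\in M(A)$ and $n\in M(C)$, it then suffices to check that $T:=\iota\big(\overline\phi(m)\otimes\overline\psi(n)\big)$ satisfies $T(\Phi(x)y)=\Phi(\iota(m\otimes n)x)y$, and by linearity and norm-continuity this reduces to $x=a\otimes c$, $y=b\otimes d$. On the left we get $\overline\phi(m)\phi(a)b\otimes\overline\psi(n)\psi(c)d=\phi(ma)b\otimes\psi(nc)d$; on the right, since $\iota(m\otimes n)(a\otimes c)=ma\otimes nc$ in $A\otimes C$, we get $\Phi(ma\otimes nc)(b\otimes d)=\phi(ma)b\otimes\psi(nc)d$. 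The two agree, so $T=\overline\Phi(\iota(m\otimes n))$, which is \eqref{barontensorsiota}.

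Part~(b) follows by evaluating both sides on an elementary tensor $a_1\otimes a_2$: the left-hand side is $\iota\big(\overline{\psi_1}(\phi_1(a_1))\otimes\overline{\psi_2}(\phi_2(a_2))\big)$, and the right-hand side is $\overline{\iota\circ(\psi_1\otimes\psi_2)}\big(\iota(\phi_1(a_1)\otimes\phi_2(a_2))\big)$, which by \eqref{barontensorsiota}, applied with $(\phi,\psi)=(\psi_1,\psi_2)$ and $(m,n)=(\phi_1(a_1),\phi_2(a_2))$, equals the same element; since both sides of the asserted identity are homomorphisms, agreement on elementary tensors suffices. For part~(c) I would write $\iota\circ(\phi\otimes 1)=\overline{\kappa}\circ\phi$, where $\kappa:B\to M(B\otimes C)$ is the nondegenerate homomorphism $\kappa(b)=b\otimes 1$; a routine check against the defining property of $\overline{\kappa}$ gives $\overline{\kappa}(m)=\iota(m\otimes 1)$ for $m\in M(B)$, and then \eqref{barcompinC*} yields $\overline{\iota\circ(\phi\otimes 1)}=\overline{\overline{\kappa}\circ\phi}=\overline{\kappa}\circ\overline\phi=\iota\circ(\overline\phi\otimes 1)$.

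I do not expect a genuine obstacle here: the content is bookkeeping with the inclusion $\iota$ and the extension $\overline{(\cdot)}$. The one point needing care is that $\iota(M(A)\otimes M(C))$ is in general not strictly dense in $M(A\otimes C)$, so in~(a) one cannot obtain \eqref{barontensorsiota} merely from strict continuity of $\overline\Phi$; it must instead be verified against the defining relation for $\overline\Phi$ as above.
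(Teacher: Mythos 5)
Your proof is correct, and for part (a) it is essentially the paper's argument: both verify the characterising relation $\overline{\Phi}(m)\bigl(\Phi(x)y\bigr)=\Phi(mx)y$ for the strict extension on elementary tensors, using nondegeneracy (the span $\lsp\{\phi(a)b\otimes\psi(c)d\}$ being dense in $B\otimes D$) to know that this suffices. Where you diverge is in (b) and (c). The paper proves \eqref{barcompiota} by a second direct computation, plugging $a_1\otimes a_2$ into both sides and applying the result to $\psi_1(b_1)c_1\otimes\psi_2(b_2)c_2$; you instead deduce it from \eqref{barontensorsiota} applied with $(\phi,\psi)=(\psi_1,\psi_2)$ and $(m,n)=(\phi_1(a_1),\phi_2(a_2))$, which is a slightly slicker derivation that makes the logical dependence of (b) on (a) explicit (and is legitimate, since $\phi_i(a_i)\in M(B_i)$ and both sides of \eqref{barcompiota} are bounded homomorphisms, so agreement on elementary tensors suffices). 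Similarly, for (c) the paper checks the defining relation for the extension directly on elements $\iota\circ(\phi\otimes 1)(a)$, while you factor $\iota\circ(\phi\otimes 1)=\overline{\kappa}\circ\phi$ through $\kappa:b\mapsto b\otimes 1$ and invoke \eqref{barcompinC*}; this works because $\overline{\kappa}(m)=\iota(m\otimes 1)$ is immediate from the defining relation for $\overline{\kappa}$ and $\overline{\kappa}\circ\phi$ is nondegenerate. Your closing caveat --- that $\iota(M(A)\otimes M(C))$ need not be strictly dense in $M(A\otimes C)$, so \eqref{barontensorsiota} must be verified against the defining relation rather than obtained from strict continuity --- is well taken and consistent with how the paper argues.
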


\begin{proof}
The nondegeneracy of $\phi$ and $\psi$ and the Cohen factorisation theorem imply that every elementary tensor in $B\otimes D$ has the form
\[
\phi(a)b\otimes \psi(c)d=\iota(\phi(a)\otimes\psi(c))(b\otimes d)=\big(\iota\circ(\phi\otimes \psi)(a\otimes c)\big)(b\otimes d),
\]
so $\iota\circ(\phi\otimes\psi)$ is nondegenerate. To establish \new{\eqref{barontensorsiota}}, we consider the effect of the multipliers on an elementary tensor $\phi(a)b\otimes \psi(c)d$. We have:
\begin{align*}
\overline{\iota\circ(\phi\otimes\psi)}(\iota(m\otimes n))&\big(\phi(a)b\otimes \psi(c)d\big)\\
&=\overline{\iota\circ(\phi\otimes\psi)}(\iota(m\otimes n))\big(\iota\circ(\phi\otimes \psi)(a\otimes c)\big)(b\otimes d)\\
&=\iota\circ(\phi\otimes\psi)\big(\iota(m\otimes n)(a\otimes c)\big)(b\otimes d)\\
&=\iota\circ(\phi\otimes\psi)(ma\otimes nc)(b\otimes d)\\
&=\iota(\phi(ma)\otimes \psi(nc))(b\otimes d)\\
&=\phi(ma)b\otimes \psi(nc)d\\
&=\overline\phi(m)\phi(a)b\otimes \overline\psi(n)\psi(c)d\\
&=\iota(\overline\phi(m)\otimes \overline\psi(n))(\phi(a)b\otimes\psi(c)d),
\end{align*}
and this gives \new{\eqref{barontensorsiota}}. For \new{\eqref{barcompiota}}, we plug an elementary tensor $a_1\otimes a_2$ into the left-hand side and apply it to an elementary tensor $\psi_1(b_1)c_1\otimes\psi_2(b_2)c_2$:
\begin{align*}
\iota\circ\big((&\new{\overline{\psi_1}}\circ\phi_1)\otimes(\new{\overline{\psi_2}}\circ\phi_2)\big)(a_1\otimes a_2)\big(\psi_1(b_1)c_1\otimes\psi_2(b_2)c_2\big)\\
&=\iota\big(\new{\overline{\psi_1}}\circ\phi_1(a_1)\otimes\new{\overline{\psi_2}}\circ\phi_2(a_2)\big)\big(\psi_1(b_1)c_1\otimes\psi_2(b_2)c_2\big)\\
&=\big(\new{\overline{\psi_1}}\circ\phi_1(a_1)\psi_1(b_1)c_1\big)\otimes\big(\new{\overline{\psi_2}}\circ\phi_2(a_2)\psi_2(b_2)c_2\big)\\
&=\psi_1(\phi_1(a_1)b_1)c_1\otimes\psi_2(\phi_2(a_2)b_2)c_2\\
&=\iota\circ(\psi_1\otimes\psi_2)\big(\phi_1(a_1)b_1\otimes\phi_2(a_2)b_2)\big)(c_1\otimes c_2)\\
&=\iota\circ(\psi_1\otimes\psi_2)\big(\iota\circ(\phi_1\otimes\phi_2)(a_1\otimes a_2)(b_1\otimes b_2)\big)(c_1\otimes c_2)\\
&=\overline{\iota\circ(\psi_1\otimes\psi_2)}\big(\iota\circ(\phi_1\otimes\phi_2)(a_1\otimes a_2)\big)\big(\iota\circ(\psi_1\otimes\psi_2)(b_1\otimes b_2)(c_1\otimes c_2)\big)\\
&=\overline{\iota\circ(\psi_1\otimes\psi_2)}\big(\iota\circ(\phi_1\otimes\phi_2)(a_1\otimes a_2)\big)\big(\psi_1(b_1)c_1\otimes \psi_2(b_2)c_2\big),
\end{align*}
which does what we want. For (c), we note that nondegeneracy is easy, let $m\in M(A)$, $a\in A$, and compute:
\begin{align*}
\overline{\iota\circ(\phi\otimes 1)}(m)\big(\iota\circ(\phi\otimes 1)(a)\big)&=\iota\circ(\phi\otimes 1)(ma)=\iota(\phi(ma)\otimes 1)\\
&=\iota(\overline\phi(m)\phi(a)\otimes 1)=\iota(\overline\phi(m)\otimes 1)\iota(\phi(a)\otimes 1)\\
&=\iota\circ(\overline\phi\otimes 1)(m)\iota\circ(\phi\otimes 1)(a).\qedhere
\end{align*}
\end{proof}

\begin{sugcon*}
Using $\overline{\phi\otimes\psi}$ to denote $\overline{\iota\circ(\phi\otimes\psi)}$ is not too dangerous, since there is nothing else that $\overline{\phi\otimes\psi}$ could mean. When we drop the bar altogether, though, we introduce ambiguities:  $\phi\otimes\psi$ could mean $\overline\phi\otimes\overline\psi$ or $\phi\otimes\overline\psi$ or $\overline{\phi\otimes\psi}$ or many other things. Worse, there are calculations in the literature  where we need to switch meanings. Since ambiguity is the enemy, and since deciding exactly how to make our  calculations precise seems to be a rather slippery matter, we think it  best to explicitly use the bar and apply Proposition~\ref{apponbarring} as we go.

So below and in the main sections of this paper, the inclusions $\iota$ of $M(C)\otimes M(D)$ in $M(C\otimes D)$ are silent, but we try to make the extensions to multiplier algebras explicit. 
\end{sugcon*}

\begin{cor} Suppose that $\phi_i:A_i\to M(B_i)$ and $\psi_i:B_i\to M(C_i)$ are nondegenerate homomorphisms for $i=1$ and $i=2$.  Then
\begin{align}\label{barontensors}
\overline{\phi_1\otimes\phi_2}(m\otimes n)&=\new{\overline{\phi_1}}(m)\otimes\new{\overline{\phi_2}}(n)\ \text{ for $m\in M(A_1)$, $n\in M(A_2)$,}\\
\label{barcomp}
(\new{\overline{\psi_1}}\circ\phi_1)\otimes(\new{\overline{\psi_2}}\circ\phi_2)\big)&=\overline{\psi_1\otimes\psi_2}\circ(\phi_1\otimes \phi_2),\\
\label{phiotimes1}
\overline{\phi_1\otimes 1}&=\new{\overline{\phi_1}}\otimes 1,\\
\label{N1}\overline{(\new{\overline{\psi_1}}\circ\phi_1)\otimes(\new{\overline{\psi_2}}\circ\phi_2)}
&=\overline{\psi_1\otimes\psi_2}\circ\overline{\phi_1\otimes \phi_2},\quad\text{and}\\
\label{N2}\overline{\psi_1\otimes\id}\circ\overline{\id\otimes\phi_2}&=\overline{\id\otimes\phi_2}\circ\overline{\psi_1\otimes\id}.
\end{align}
\end{cor}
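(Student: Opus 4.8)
The plan is to obtain all five identities from Proposition~\ref{apponbarring} together with the composition identity \eqref{barcompinC*}, after restoring the inclusions $\iota$ whose suppression is the subject of the Convention. Under that convention $\phi_1\otimes\phi_2$ means $\iota\circ(\phi_1\otimes\phi_2)\colon A_1\otimes A_2\to M(B_1\otimes B_2)$, which is nondegenerate by Proposition~\ref{apponbarring}(a), so that $\overline{\phi_1\otimes\phi_2}$ is its (unique, strictly continuous) extension to $M(A_1\otimes A_2)$; likewise $\psi_1\otimes\psi_2$ means $\iota\circ(\psi_1\otimes\psi_2)$. With this reading, \eqref{barontensors}, \eqref{barcomp} and \eqref{phiotimes1} are nothing but notational restatements of \eqref{barontensorsiota}, \eqref{barcompiota} and Proposition~\ref{apponbarring}(c) respectively, so I would dispatch each in a single line.

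For \eqref{N1} I would apply the bar to the two (equal) sides of \eqref{barcomp}. Barring the left-hand side $\iota\circ\bigl((\overline{\psi_1}\circ\phi_1)\otimes(\overline{\psi_2}\circ\phi_2)\bigr)$ produces, by definition, the left-hand side of \eqref{N1}. The right-hand side of \eqref{barcomp} is the composite $\overline{\iota\circ(\psi_1\otimes\psi_2)}\circ\bigl(\iota\circ(\phi_1\otimes\phi_2)\bigr)$ of the nondegenerate homomorphism $\iota\circ(\phi_1\otimes\phi_2)$ (Proposition~\ref{apponbarring}(a)) followed by the extension of the nondegenerate homomorphism $\iota\circ(\psi_1\otimes\psi_2)$; hence \eqref{barcompinC*} applies and barring it produces $\overline{\iota\circ(\psi_1\otimes\psi_2)}\circ\overline{\iota\circ(\phi_1\otimes\phi_2)}$, which is the right-hand side of \eqref{N1}. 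Equating the two proves \eqref{N1}.

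Finally, I would deduce \eqref{N2} from two applications of \eqref{N1} in which two of its four maps are taken to be identity homomorphisms: one choice of identities gives $\overline{\psi_1\otimes\id}\circ\overline{\id\otimes\phi_2}=\overline{\psi_1\otimes\phi_2}$, and another gives $\overline{\id\otimes\phi_2}\circ\overline{\psi_1\otimes\id}=\overline{\psi_1\otimes\phi_2}$, so that both composites coincide. Here one must bear in mind that the two occurrences of $\overline{\psi_1\otimes\id}$, and likewise of $\overline{\id\otimes\phi_2}$, denote slightly different maps — the suppressed identity acts on $A_2$ in one case and on $B_2$ in the other — exactly the ambiguity the Convention is designed to keep under control.

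I expect the only real work to be this bookkeeping with the silent $\iota$'s: checking at each step that the maps being barred or composed are nondegenerate, so that the extensions exist and \eqref{barcompinC*} may legitimately be used, and that $\iota$ interchanges the relevant leg-wise and identity maps — both of which are immediate from the defining relation $\iota(m\otimes n)(b\otimes d)=(mb)\otimes(nd)$ and the uniqueness of extensions. None of this is deep; it is precisely the careful accounting the Convention exists to make routine.
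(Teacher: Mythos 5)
Your proposal is correct and follows essentially the same route as the paper: the first three identities are read off from Proposition~\ref{apponbarring} under the convention, \eqref{N1} comes from combining \eqref{barcomp} with \eqref{barcompinC*} (you bar both sides of \eqref{barcomp}, the paper starts from the right-hand side of \eqref{N1} and unwinds it, which is the same computation in the other direction), and \eqref{N2} follows by specialising \eqref{N1} with identity maps, exactly as in the paper. Your remark that the two occurrences of $\overline{\psi_1\otimes\id}$ (and of $\overline{\id\otimes\phi_2}$) act on different algebras is a worthwhile point of care that the paper leaves implicit.
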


\begin{proof}
The first three equations are just the results in Proposition~\ref{apponbarring} with our new convention. To establish \eqref{N1}, we use  \eqref{barcompinC*} and then \eqref{barcomp}:
\begin{align*}
\overline{\psi_1\otimes\psi_2}\circ\overline{\phi_1\otimes \phi_2}
&=\overline{
\overline{\psi_1\otimes \psi_2}\circ (\phi_1\otimes\phi_2)}=\overline{(\overline{\psi_1}\circ\phi_1)\otimes(\overline{\psi_2}\circ\phi_2)}.
\end{align*}
Now taking $\psi_2=\id$ and $\phi_1=\id$ in \eqref{N1} gives
\[
\overline{\psi_1\otimes\id}\circ\overline{\id\otimes \phi_2}=\overline{(\new{\overline{\psi_1}}\circ\phi_1)\otimes(\new{\overline{\id}}\circ\phi_2)}=
\overline{\psi_1\otimes\phi_2}.
\]
Similarly $\overline{\id\otimes \phi_2}\circ\overline{\psi_1\otimes\id}=\overline{\psi_1\otimes\phi_2}$, and we have proved \eqref{N2}.
\end{proof}

We now illustrate how these formulas are used by proving three standard results which are needed in the main text.

\begin{lemma}\label{covrestricts}
Suppose that $(B,G,\delta)$ is a full coaction and $N$ is a closed normal subgroup of $G$. If $(\pi,\mu)$ is a covariant representation of $(B,G,\delta)$ in $M(C)$, then $(\pi,\mu|)$ is a covariant representation of $(B,G/N,\delta|)$.
\end{lemma}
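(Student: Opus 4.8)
The plan is to obtain the covariance of $(\pi,\mu|)$ for $(B,G/N,\delta|)$ by applying the barred homomorphism $\overline{\id_C\otimes\pi_q}$ to both sides of the covariance identity
\[
\overline{\pi\otimes\id}\circ\delta(b)=\Ad\overline{\mu\otimes\id}(w_G)\big(\pi(b)\otimes 1\big)
\]
that we are given for $(\pi,\mu)$, and then reading off each side as the corresponding ingredient attached to $\delta|$. Here $\mu|$ is, as in Proposition~\ref{propnormal}(d), the restriction of $\overline\mu:C_b(G)\to M(C)$ to the copy of $C_0(G/N)$ inside $C_b(G)$ given by $f\mapsto f\circ q$; writing $q^*:C_0(G/N)\to M(C_0(G))$ for this (nondegenerate) embedding, we have $\mu|=\overline\mu\circ q^*$, which is a nondegenerate homomorphism of $C_0(G/N)$ into $M(C)$ since a composite of nondegenerate homomorphisms stays nondegenerate.

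For the left-hand side, $\delta|=\overline{\id\otimes\pi_q}\circ\delta$ and \eqref{N2} gives $\overline{\pi\otimes\id}\circ\overline{\id\otimes\pi_q}=\overline{\id\otimes\pi_q}\circ\overline{\pi\otimes\id}$, so that $\overline{\id_C\otimes\pi_q}\big(\overline{\pi\otimes\id}(\delta(b))\big)=\overline{\pi\otimes\id}(\delta|(b))$. For the right-hand side, $\overline{\id_C\otimes\pi_q}$ is a unital homomorphism, hence commutes with $\Ad$ and fixes $\pi(b)\otimes 1$ (because $\overline{\pi_q}(1)=1$), so the whole thing reduces to the identity
\[
\overline{\id_C\otimes\pi_q}\big(\overline{\mu\otimes\id}(w_G)\big)=\overline{\mu|\otimes\id}(w_{G/N}),
\]
where $w_{G/N}$ is the multiplier of $C_0(G/N)\otimes C^*(G/N)$ determined by $k_{G/N}$. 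Using \eqref{N2} once more to slide $\pi_q$ past $\mu$, the left side is $\overline{\mu\otimes\id}\big(\overline{\id\otimes\pi_q}(w_G)\big)$, and then \eqref{N1} lets us absorb a pullback into $\mu$, via $\overline{\mu\otimes\id}\circ\overline{q^*\otimes\id}=\overline{\mu|\otimes\id}$; so it is enough to prove the single identity $\overline{\id\otimes\pi_q}(w_G)=\overline{q^*\otimes\id}(w_{G/N})$ of multipliers of $C_0(G)\otimes C^*(G/N)$.

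This last identity is where the only real content sits, and it is a calculation in $M\big(C_0(G,C^*(G/N))\big)$ much like the one quoted earlier in the paper that $\overline{\id\otimes q^r}(w_G)=w_G^r$: both multipliers are the bounded strictly continuous function $s\mapsto k_{G/N}(q(s))$ on $G$ with values in $M(C^*(G/N))$ — the left because $\overline{\pi_q}\circ k_G=k_{G/N}\circ q$ and $\overline{\id\otimes\pi_q}$ acts pointwise on the function picture of $w_G$, the right because $\overline{q^*\otimes\id}$ acts there by precomposition with $q$. Assembling the two sides then gives
\[
\overline{\pi\otimes\id}(\delta|(b))=\Ad\overline{\mu|\otimes\id}(w_{G/N})\big(\pi(b)\otimes 1\big)\qquad(b\in B),
\]
which is exactly the covariance of $(\pi,\mu|)$ for $(B,G/N,\delta|)$. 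So the main obstacle is not conceptual but clerical: keeping the barring formulas of Appendix~\ref{secbarring} straight and checking the nondegeneracy of $q^*$ needed to make sense of $\overline{q^*\otimes\id}$ and to apply \eqref{N1}.
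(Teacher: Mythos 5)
Your proposal is correct and follows essentially the same route as the paper: apply $\overline{\id\otimes\pi_q}$ to the covariance relation, move it past $\overline{\pi\otimes\id}$ and $\Ad$ using \eqref{N2} and unitality, and reduce everything to the identity $\overline{\id\otimes\pi_q}(w_G)=\overline{\iota_N\otimes\id}(w_{G/N})$ (your $q^*$ is the paper's $\iota_N$), established by the same pointwise computation in $M(C_0(G,C^*(G/N)))$ showing both are multiplication by $s\mapsto w_{G/N}(sN)$, after which \eqref{N1} gives $\overline{\mu\otimes\id}\circ\overline{\iota_N\otimes\id}=\overline{\mu|\otimes\id}$. No gaps; this matches the paper's argument step for step.
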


\begin{proof}
We write $\iota_N$ for the nondegenerate embedding of $C_0(G/N)$ in $M(C_0(G))$, so that $\mu|:=\overline\mu\circ\iota_N$. For $f\in C_0(G,C^*(G/N))=C_0(G)\otimes C^*(G/N)$,
\[
\overline{{\id}\otimes \pi_q}(w_G)({\id}\otimes \pi_q)(f)=({\id}\otimes \pi_q)(w_Gf)
\]
is the function $s\mapsto \pi_q(w_G(s)f(s))=w_{G/N}(sN)\pi_q(f(s))$. Thus, viewed as a multiplier of $C_0(G, C^*(G/N))$, $\overline{{\id}\otimes \pi_q}(w_G)$ is multiplication by the function $s\mapsto w_{G/N}(sN)$. This function is the image under $\overline{\iota_N\otimes\id}$ of the function $w_{G/N}\in M(C_0(G/N,C^*(G/N)))$, and thus we have
\begin{align}
\overline{\mu\otimes\id}\circ\overline{{\id}\otimes \pi_q}(w_G)
&=\overline{\mu\otimes\id}\circ\overline{\iota_N\otimes\id}(w_{G/N})\notag\\
&=\overline{(\overline\mu\circ\iota_N)\otimes\id}(w_{G/N})\notag\quad\text{(using \eqref{N1}\,)}\\
&=\overline{\mu|\otimes\id}(w_{G/N}).\label{relwGwGN}
\end{align}
Now we take $b\in B$ and compute, obtaining
\begin{align*}
\overline{\pi\otimes\id}(\delta|(b))
&=\overline{\pi\otimes\id}\circ\overline{{\id}\otimes {\pi_q}}(\delta(b))\\
&=\overline{{\id}\otimes\pi_q}\circ\overline{\pi\otimes\id}(\delta(b))\quad\text{(using \eqref{N2}\,)}\\
&=\overline{{\id}\otimes \pi_q}\circ \Ad\overline{\mu\otimes\id}(w_G)(\pi(b)\otimes 1_{C^*(G)})\\
&= \Ad\big(\overline{{\id}\otimes \pi_q}\circ\overline{\mu\otimes\id}(w_G)\big)(\pi(b)\otimes 1_{C^*(G/N)})\\
&= \Ad\big(\overline{\mu\otimes\id}\circ\overline{{\id}\otimes \pi_q}(w_G)\big)(\pi(b)\otimes 1_{C^*(G/N)})\quad\text{(using \eqref{N2}\,)}\\
&= \Ad\big(\overline{\mu|\otimes\id}(w_{G/N})\big)(\pi(b)\otimes 1_{C^*(G/N)})\quad\text{(using \eqref{relwGwGN}\,)},
\end{align*}
which is the required covariance.
\end{proof}

\begin{lemma}\label{barintform}
Suppose that $\alpha:G\to \Aut A$ is an action, that $(\pi,u)$ is a covariant representation of $(A,\alpha)$ in $M(B)$ such that $\pi\times u$ factors through a representation $\pi\times_r u$ of $A\times_{\alpha,r} G$, and that $\theta:B\to M(C)$ is a nondegenerate homomorphism. Then $\overline\theta\circ(\pi\times u)$ also factors through a representation $(\overline\theta\circ\pi)\times_r(\overline\theta\circ u)$ of $A\times_{\alpha,r} G$, and we have $\overline\theta\circ(\pi\times_r u)=(\overline\theta\circ\pi)\times_r(\overline\theta\circ u)$.
\end{lemma}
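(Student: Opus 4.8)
The plan is to deduce everything from the uniqueness of integrated forms together with the compatibility identity \eqref{barcompinC*}. The starting observation is that $\overline\theta\circ(\pi\times u)\colon A\times_\alpha G\to M(C)$ is a nondegenerate homomorphism, being the composite of the nondegenerate homomorphisms $\pi\times u$ and $\theta$ (this uses the Cohen factorisation theorem, or just the elementary fact that $\overline\theta\circ\rho$ is nondegenerate whenever $\rho$ and $\theta$ are).

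First I would check that $(\overline\theta\circ\pi,\overline\theta\circ u)$ is a covariant representation of $(A,\alpha)$ in $M(C)$: the map $\overline\theta\circ u$ is a strictly continuous unitary-valued homomorphism because $\overline\theta$ is strictly continuous on bounded subsets of $M(B)$, and the covariance relation for $(\overline\theta\circ\pi,\overline\theta\circ u)$ is obtained by applying the homomorphism $\overline\theta$ to the covariance relation for $(\pi,u)$. Then, using \eqref{barcompinC*} in the form $\overline{\overline\theta\circ(\pi\times u)}=\overline\theta\circ\overline{\pi\times u}$ together with $\overline{\pi\times u}\circ i_A=\pi$ and $\overline{\pi\times u}\circ i_G=u$, I obtain $\overline{\overline\theta\circ(\pi\times u)}\circ i_A=\overline\theta\circ\pi$ and $\overline{\overline\theta\circ(\pi\times u)}\circ i_G=\overline\theta\circ u$; the uniqueness of the integrated form therefore gives $(\overline\theta\circ\pi)\times(\overline\theta\circ u)=\overline\theta\circ(\pi\times u)$.

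For the reduced quotient, I would use that $\ker(\overline\theta\circ(\pi\times u))\supset\ker(\pi\times u)\supset\ker q^r$, so $(\overline\theta\circ\pi)\times(\overline\theta\circ u)=\overline\theta\circ(\pi\times u)$ descends to a nondegenerate homomorphism $(\overline\theta\circ\pi)\times_r(\overline\theta\circ u)$ of $A\times_{\alpha,r}G$ with $\big((\overline\theta\circ\pi)\times_r(\overline\theta\circ u)\big)\circ q^r=\overline\theta\circ(\pi\times u)$. Since also $(\pi\times_r u)\circ q^r=\pi\times u$, we get $\overline\theta\circ(\pi\times_r u)\circ q^r=\overline\theta\circ(\pi\times u)=\big((\overline\theta\circ\pi)\times_r(\overline\theta\circ u)\big)\circ q^r$, and surjectivity of $q^r$ gives $\overline\theta\circ(\pi\times_r u)=(\overline\theta\circ\pi)\times_r(\overline\theta\circ u)$, as required.

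There is no real obstacle here: the argument is pure bookkeeping with the canonical extensions, and the only points that need a moment's care are that $\overline\theta\circ(\pi\times u)$ is genuinely nondegenerate (so that \eqref{barcompinC*} and the universal characterisation of integrated forms both apply) and the strict continuity of $\overline\theta\circ u$.
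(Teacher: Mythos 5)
Your argument is correct and is essentially the paper's own proof: both use \eqref{barcompinC*} to identify $\overline\theta\circ(\pi\times u)$ with the integrated form $(\overline\theta\circ\pi)\times(\overline\theta\circ u)$ via its compositions with $i_A$ and $i_G$, and then factor through $q^r$ to get the reduced statement. The extra details you supply (nondegeneracy of the composite, covariance and strict continuity of $(\overline\theta\circ\pi,\overline\theta\circ u)$, and the explicit kernel containment) are exactly what the paper leaves implicit, so there is nothing to change.
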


\begin{proof}
Using \eqref{barcompinC*}, we have
\[
\overline{\overline\theta\circ(\pi\times u)}\circ i_A=\overline\theta\circ\overline{\pi\times u}\circ i_A=\overline\theta\circ\pi
\]
and $\overline{\overline\theta\circ(\pi\times u)}\circ i_G=\overline\theta\circ u$. Hence
$\overline\theta\circ(\pi\times u)=(\overline\theta\circ\pi)\times(\overline\theta\circ u)$. Now we  compute:
\[
\overline\theta\circ(\pi\times_r u)\circ q^r=\overline\theta\circ(\pi\times u)=(\overline\theta\circ\pi)\times(\overline\theta\circ u).
\]
Since the left-hand side factors through $q^r$, so does the right-hand side, and then by definition of $\times_r$ the right-hand side is $((\overline\theta\circ\pi)\times_r(\overline\theta\circ u))\circ q^r$.
\end{proof}

The next proposition was implicitly asserted on page 420 of \cite{kqcat}.

\begin{prop}\label{ex1} 
Let $G$ be a locally compact group. There is a category $\cscoactnd(G)$ in which the objects are full coactions of $G$ on $C^*$-algebras, the morphisms from $(A,\delta)$ to $(B,\epsilon)$ are the nondegenerate homomorphisms $\phi:A\to M(B)$ such that $\overline{\phi\otimes\id}\circ\delta=\overline\epsilon\circ\phi$, and $\psi\circ \phi$ is by definition $\overline\psi\circ\phi$.
\end{prop}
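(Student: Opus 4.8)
The plan is to verify the three category axioms, the objects and morphisms being prescribed: (i) the composite $\overline\psi\circ\phi$ of a composable pair of morphisms is again a morphism; (ii) this composition is associative; (iii) for each object $(A,\delta)$ the inclusion $\id_A\colon A\hookrightarrow M(A)$ is a morphism and is an identity for the composition. The heart of the matter is (i), and in particular its equivariance clause; (ii) and (iii) are purely formal consequences of the identity \eqref{barcompinC*}.

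For (i), suppose $\phi\colon(A,\delta)\to(B,\epsilon)$ and $\psi\colon(B,\epsilon)\to(C,\eta)$ are morphisms. That $\overline\psi\circ\phi$ is a nondegenerate homomorphism from $A$ to $M(C)$ is the standard fact that a composition of nondegenerate homomorphisms into multiplier algebras is nondegenerate; concretely $\lsp\{\overline\psi(\phi(a))\psi(b)c\}=\lsp\{\psi(\phi(a)b)c\}$ is dense in $C$ because $\lsp\{\phi(a)b\}$ is dense in $B$ and $\psi$ is nondegenerate. In particular $(\overline\psi\circ\phi)\otimes\id$ (that is, $\iota\circ((\overline\psi\circ\phi)\otimes\id)$) is nondegenerate by Proposition~\ref{apponbarring}(a), so the left side below makes sense. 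For the equivariance I would then compute
\begin{align*}
\overline{(\overline\psi\circ\phi)\otimes\id}\circ\delta
&=\overline{\psi\otimes\id}\circ\overline{\phi\otimes\id}\circ\delta
\qquad\text{(by \eqref{N1}, with $\psi_2=\phi_2=\id$)}\\
&=\overline{\psi\otimes\id}\circ\overline\epsilon\circ\phi
\qquad\text{(since $\phi$ is a morphism)}\\
&=\overline{\overline{\psi\otimes\id}\circ\epsilon}\circ\phi
\qquad\text{(by \eqref{barcompinC*})}\\
&=\overline{\overline\eta\circ\psi}\circ\phi
\qquad\text{(since $\psi$ is a morphism)}\\
&=\overline\eta\circ\overline\psi\circ\phi
\qquad\text{(by \eqref{barcompinC*}),}
\end{align*}
which is exactly the condition characterising morphisms $(A,\delta)\to(C,\eta)$, applied to $\overline\psi\circ\phi$.

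The remaining axioms are formal. Associativity of three composable morphisms $\phi,\psi,\chi$ asks that $\overline{\overline\chi\circ\psi}\circ\phi=\overline\chi\circ(\overline\psi\circ\phi)$, and \eqref{barcompinC*} gives $\overline{\overline\chi\circ\psi}=\overline\chi\circ\overline\psi$, so both sides equal $\overline\chi\circ\overline\psi\circ\phi$. For (iii): $\overline{\id_A\otimes\id}$ is the identity on $M(A\otimes C^*(G))$, so $\overline{\id_A\otimes\id}\circ\delta=\delta=\overline\delta\circ\id_A$ and $\id_A$ is a morphism; and $\phi\circ\id_A=\overline\phi\circ\id_A=\phi$ since $\overline\phi$ restricts to $\phi$, while $\id_B\circ\phi=\overline{\id_B}\circ\phi=\phi$ since $\overline{\id_B}=\id_{M(B)}$. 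This exhausts the verification. There is no genuine obstacle here; the only point requiring care is step (i), namely keeping straight which barred tensor product is meant on each line, which is precisely the bookkeeping codified by \eqref{N1} and \eqref{barcompinC*} in this appendix.
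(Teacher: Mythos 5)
Your proof is correct and follows essentially the same route as the paper: the equivariance computation for $\overline\psi\circ\phi$, using \eqref{N1} and \eqref{barcompinC*} in exactly the order you give, is the paper's entire argument. The only difference is that the paper delegates the purely formal axioms (associativity, identities, nondegeneracy of composites) to the already-established category $\cscatnd$ of Proposition~1 of \cite{aHRWsurvey}, whereas you verify them directly, which is harmless.
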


\begin{proof}
We know from Proposition~1 of \cite{aHRWsurvey} that there is a category $\cscatnd$ of $C^*$-algebras and nondegenerate homomorphisms in which composition is defined by $\psi\circ \phi:=\overline\psi\circ\phi$. So we need to prove that if $\phi:A\to M(B)$  is $\delta$\,--\,$\epsilon$ equivariant  $\psi:B\to M(C)$ is $\delta$\,--\,$\eta$ equivariant, then
\begin{equation}\label{compequiv}
\overline{(\overline\psi\circ\phi)\otimes\id}\circ\delta=\overline\eta\circ({\overline\psi\circ\phi}).
\end{equation}

Applying \eqref{barcomp} with $\phi_1=\phi_2$ the identity on $C^*(G)$, we find that
\[
(\overline\psi\circ\phi)\otimes\id=\overline{\psi\otimes\id}\circ(\phi\otimes\id).
\]
We now plug this into the left-hand side of \eqref{compequiv}, and use \eqref{N1} and \eqref{barcompinC*}:
\begin{align*}
\overline{(\overline\psi\circ\phi)\otimes\id}\circ\delta
&=\big(\overline{\psi\otimes\id}\circ\overline{\phi\otimes\id}\big)\circ\delta=\overline{\psi\otimes\id}\circ\big(\overline{\phi\otimes\id}\circ\delta\big)\\
&=\overline{\psi\otimes\id}\circ\overline\epsilon\circ{\phi}=\overline{\overline{\psi\otimes\id}\circ\epsilon}\circ{\phi}\\
&=\overline{\overline\eta\circ\psi}\circ{\phi}=\overline\eta\circ\overline{\psi}\circ{\phi}.\qedhere
\end{align*}
\end{proof}


\begin{thebibliography}{29}

\bibitem{ekq}
S.~Echterhoff, S.~Kaliszewski and J.~Quigg, \emph{Maximal coactions}, Internat. J. Math. \textbf{15} (2004), 47--61.

\bibitem{taco} S.~Echterhoff, S.~Kaliszewski, J.~Quigg and I.~Raeburn,
  \emph{Naturality and induced representations},
  Bull. Austral. Math. Soc. \textbf{61} (2000), 415--438.

\bibitem{enchilada}
S.~Echterhoff, S.~Kaliszewski, J.~Quigg and I.~Raeburn, \emph{A categorical
  approach to imprimitivity theorems for {$C\sp *$}-dynamical systems}, Mem.
  Amer. Math. Soc. \textbf{180} (2006), no.~850, viii+169.

\bibitem{EKR} S. Echterhoff, S. Kaliszewski, and I. Raeburn,
\emph{Crossed products by dual coactions of groups and homogeneous spaces},
J. Operator Theory \textbf{39} (1998), 151--176.

    \bibitem{aHKRWproper-n}  A.~an Huef, S. Kaliszewski, I. Raeburn and
  D.P. Williams, \emph{Naturality of Rieffel's Morita equivalence for proper actions}, Algebr. Represent. Theory, to appear.

  \bibitem{aHKRW4} A.~an Huef, S. Kaliszewski, I. Raeburn and
  D.P. Williams, \emph{Fixed-point algebras for proper actions and crossed products by homogeneous spaces}, preprint; arXiv:math/0907.0681.

  \bibitem{hrman} A.~an Huef and I.~Raeburn, \emph{Mansfield's imprimitivity theorem for arbitrary closed subgroups}, Proc. Amer. Math. Soc. \textbf{132} (2004), 1153--1162.


  \bibitem{aHRWsurvey} A.~an~Huef, I.~Raeburn and D.P. Williams, \emph{Functoriality of Rieffel's generalised fixed-point algebras for proper actions}, Proc. Symp. in Pure Math., to appear.

\bibitem{KQimprimitivity} S. Kaliszewski and J. Quigg, \emph{Imprimitivity for $C\sp \ast$-coactions of non-amenable groups}, Math. Proc. Cambridge Philos. Soc. \textbf{123} (1998), 101--118.

\bibitem{kqcat}
S. Kaliszewski and J. Quigg, \emph{Categorical Landstad duality for actions}, Indiana Univ. Math. J. \textbf{58} (2009), 415--441.

\bibitem{kqrproper}
S. Kaliszewski, J. Quigg and I. Raeburn, \emph{Proper actions, fixed-point algebras and naturality in nonabelian duality}, J. Funct. Anal. \textbf{254} (2008), 2949--2968.

\bibitem{K} Y. Katayama, \emph{Takesaki's duality for a non-degenerate co-action}, Math. Scand. \textbf{55} (1985), 141--151.

\bibitem{L} M.B. Landstad, \emph{Duality theory for covariant systems}, Trans. Amer. Math. Soc. \textbf{248} (1979), 223--267.

\bibitem{LPRS} M.B. Landstad, J. Phillips, I. Raeburn and C.E. Sutherland, \emph{Representations of crossed products by coactions and principal bundles}, Trans. Amer. Math. Soc. \textbf{299} (1987), 747--784.

\bibitem{ML} S. Mac Lane, Categories for the Working Mathematician, Graduate Texts in Math, vol. ~3, Springer Verlag, 1971.

\bibitem{man} K. Mansfield, \emph{Induced
representations of crossed products by coactions}, J. Funct. Anal.
\textbf{97}   (1991), 112--161.


\bibitem{Q} J.C. Quigg, \emph{Landstad duality for $C^*$-coactions},  Math. Scand. \textbf{71} (1992), 277--294.

\bibitem{Q2}  J.C. Quigg, \emph{Full and reduced $C^*$-coactions}, Math. Proc. Camb. Phil. Soc. \textbf{116} (1994), 435--450.


\bibitem{rae} I. Raeburn, \emph{On crossed products by coactions and their representation theory}, Proc. London Math. Soc. \textbf{64} (1992), 625--652.

\bibitem{raetak} I. Raeburn, \emph{On crossed products and Takai duality}, Proc. Edinburgh Math. Soc. \textbf{31} (1988), 321--330.

\bibitem{tfb}
I.~Raeburn and D.P. Williams, Morita Equivalence and Continuous-Trace
$C^*$-Algebras, Math. Surveys and Monographs, vol.~60,
Amer. Math. Soc., Providence, 1998.

\bibitem{proper}
M.A. Rieffel, \emph{Proper actions of groups on {$C\sp *$}-algebras}, Mappings
of Operator Algebras, Progress in Math., vol.~84,
Birkh\"auser, Boston, 1990, pages~141--182.

\bibitem{integrable} M.A. Rieffel, \emph{Integrable and proper actions
    on {$C\sp *$}-algebras, and square-integrable representations of
    groups}, Expositiones Math. \textbf{22} (2004), 1--53.

\bibitem{tfb2} D.P. Williams, Crossed Products of $C^*$-Algebras, Math. Surveys and Monographs, vol.~134, Amer. Math. Soc., Providence, 2007.

\end{thebibliography}
\end{document}